\theoremstyle{plain}
\newtheorem{theorem}{Theorem}[section]
\newtheorem{corollary}[theorem]{Corollary}
\newtheorem{lemma}[theorem]{Lemma}
\newtheorem{remark}{Remark}[section]
\newcommand{\ind}{{\bf 1}}
\def\beqlb{\begin{eqnarray}}\def\eeqlb{\end{eqnarray}}
\def\beqnn{\begin{eqnarray*}}\def\eeqnn{\end{eqnarray*}}
\newcommand{\bcen}{\begin{center}}
	\newcommand{\ecen}{\end{center}}
\newcommand{\bgeqn}{\begin{equation}}
	\newcommand{\edeqn}{\end{equation}}
\begin{document}
	\title{On the maximal displacement of subcritical branching random walks	with or without	killing}
	\author{Haojie Hou, Shuxiong Zhang}
	\maketitle
	
	\renewcommand{\thefootnote}{\fnsymbol{footnote}}

	\begin{abstract}
		Consider a subcritical branching random walk $\{Z_k\}_{k\geq 0}$ with offspring distribution $\{p_k\}_{k\geq 0}$ and step size $X$.  Let $M_n$ denote the rightmost position reached by $\{Z_k\}_{k\geq 0}$ up to generation $n$,  and define $M := \sup_{n\geq 0} M_n$.
		In this paper we  give asymptotics of tail probability of  $M$ under optimal assumptions $\sum^{\infty}_{k=1}(k\log k) p_k<\infty$ and $\mathbb{E}[Xe^{\gamma X}]<\infty$, where $\gamma >0$ is a constant such that $\mathbb{E}[e^{\gamma X}]=\frac{1}{m}$ and  $m=\sum_{k=0}^\infty kp_k\in (0,1)$.  Moreover, we confirm the conjecture of Neuman and Zheng 
		[Probab. Theory Related Fields. 167 (2017) 1137--1164]
		by establishing the existence of a critical value $m\mathbb{E}[X e^{\gamma X}]$ such that
		\begin{align}
			\lim_{n\to\infty}e^{\gamma cn}\mathbb{P}(M_n\geq cn)=
			\begin{cases}
				&\kappa \in(0,1],~c\in\big(0,m\mathbb{E}[Xe^{\gamma X}]\big); \\
				&0,~~~~~~~~~~~c\in\big(m\mathbb{E}[Xe^{\gamma X}],\infty\big),
			\end{cases}
		\end{align}
		where $\kappa$ represents the non-zero limit.
		Finally, we extend these results to the maximal displacement of branching random walks with killing. Interestingly, this limit can be characterized through both the global minimum of a random walk with positive drift
		and the maximal displacement of the branching random walk without killing.
	\end{abstract}
	
	\bigskip
	\noindent Mathematics Subject Classifications (2020): 
	60J80; 60G50; 60G70
	
	\bigskip
	\noindent\textit{Keywords}: Branching random walk; maximal displacement; renewal theorem
	
	\section{Introduction and Main results}
	\subsection{Introduction}		
	The branching random walk (BRW) $\{Z_n\}_{n\geq 0}$ is a measure-valued Markov process whose law is determined by
	an offspring distribution $\{p_k\}_{k\geq 0}$ on $\mathbb{N}$ (the probability distribution of the number of children) and an $\mathbb{R}$-valued random variable $X$ representing the step size or displacement.
	There are several equivalent ways to define a branching random walk. In this work, we adopt the formulation presented in \cite{zheng17ptrf,lalley15ptrf}, which we now describe 
	formally.
	
	At generation $0$, there is one particle located at the origin (i.e. $Z_0=\delta_0$).  At generation $1$,
	this particle first performs a jump to some position $X_1 \stackrel{\mathrm{d}}{=} X$ and then produces $N$ offspring, where $N$ is equal in law to $\{p_k\}_{k\geq 0}$ and independent of $X_1$. Therefore, the point process $Z_1$ formed by all the
	particles
	 alive at generation $1$ is given by $Z_1= N\delta_{X_1}$.  At generation $2$, the particles alive at generation $1$ repeat their parent's behaviour independently from the place where they are born (first jump and then branch) and the procedure goes on.
	An alternative definition reverses the order of operations that each individual first branches and  then jumps. As noted in \cite[Remarks 2 and 3]{lalley15ptrf}, both formulations yield equivalent results for our purposes but
	the former one is more convenient to deal with.
	
	Let $\mathbb{T}$ denote the genealogical tree of the branching process
	rooted by $\emptyset$ with
	offspring distribution $\{p_k\}_{k\geq0}$; see \cite[p.13-14]{shi2015}.  Let
	$X_\emptyset=0$ and $\{X_v,\ v\in\mathbb{T}\setminus \{\emptyset \}\}$ be
	i.i.d. copies of $X$
	such that for each $v\in \mathbb{T}$, $X_v$ stands
	for the displacement of particle $v$. Let $V(\omega):=\sum_{v\preceq\omega}X_v$ be the position of particle $\omega\in\mathbb{T}$, where $v\preceq \omega$
	means that $v=\omega$ or $v$ is an ancestor of $\omega$. For each particle $\omega\in\mathbb{T}$, we use $|\omega|$ to denote the generation of $\omega$. The 
	population and their locations
	 at generation $n$ forms a point process given by
	$$Z_n=\sum_{|\omega|=n}\delta_{V(\omega)}.$$
	The branching random walk $\{Z_n\}_{n\geq0}$ is called a subcritical (critical or supercritical) branching random walk if $m:=\sum^{\infty}_{k=0}kp_k\in [0,1)$ ($m=1$ or $m>1$).  We refer to Shi \cite{shi2015} for a more detailed overview of branching random walks.
	Throughout this article, for a finite point measure $\mu$, we write $\mathbb{P}_{\mu}$ ($\mathbb{E}_{\mu}$) for the probability law (expectation operator)
	of the  branching random walk with initial value $Z_0=\mu$.
	For notational convenience, we write $\mathbb{P} = \mathbb{P}_{\delta_0}$.
	\par

	In this paper, we consider the tail probabilities of the maximal displacements in a subcritical branching random walk.
	Define the following extremal quantities:
	$$M_n:=\max_{|\omega|=i,0\leq i\leq n}V(\omega)~\text{for}~n\geq0,~M:=\max_{n\geq0}M_n,$$
	which represent  the maximal position up to generation $n$ and the global maximum respectively.
	For the subcritical case, 
	since the 
	 population dies out in finite time $\mathbb{P}$-almost surely, 
	 we have 
	  $M_n\leq M<\infty$. Under the assumptions that $m \in (0,1)$, $\sum_{k \geq 0} k^3 p_k < \infty$ and that the step size $X$ is a centered integer-valued random variable such that
	$\mathbb{E}[e^{\theta X}]<\infty$ for all $\theta>0$,
	Neuman and Zheng \cite[Theorem 1.2]{zheng17ptrf} established the following asymptotics:
	\begin{equation}\label{NZ-result'}
		0 < \liminf_{n \to \infty} e^{\gamma n} \mathbb{P}(M \geq n) \leq \limsup_{n \to \infty} e^{\gamma n} \mathbb{P}(M \geq n) \leq 1,
	\end{equation}
	where $\gamma > 0$ is the solution to the equation $\mathbb{E}[e^{\gamma X}] = \frac{1}{m}$. Moreover, under the additional assumptions that $\text{esssup}X<\infty$ and that $X$ is nearly right-continuous, Neuman and Zheng \cite[Theorem 1.2]{zheng17ptrf} proved that the following limit exists
	\begin{align}\label{NZ-result}
		\lim_{n\to\infty}e^{\gamma n}\mathbb{P}(M\geq n)=: \kappa \in (0, 1].
	\end{align}
	Some additional related works on subcritical branching L\'{e}vy processes will be introduced in Section~\ref{S1.3}. To the best of our knowledge, no further refinements of \eqref{NZ-result'} and \eqref{NZ-result}  exist in the literature for either subcritical branching random walks or subcritical	branching L\'{e}vy processes
	in the case where the step size has unbounded positive jumps with light-tailed.
	
	For $M_n$, Neuman and Zheng \cite[Theorem 1.6]{zheng17ptrf} discovered the following phase transition behavior. 
	They proved that there
	 exist constants $0 < \underline{\delta} \leq \overline{\delta} < \infty$ and the same constant $\kappa$
	in
	\eqref{NZ-result} such that
	\begin{itemize}
		\item[(i)] For $c\in(0,\underline{\delta})$,
		\begin{align}\label{4rr4tr3e1}
			\lim_{n\to\infty}e^{\gamma cn}\mathbb{P}(M_n\geq cn)=\kappa.
		\end{align}
		\item[(ii)] For $c\in(\overline{\delta},\infty)$,
		\begin{align}\label{4rr4tr3e2}
			\lim_{n\to\infty}e^{\gamma cn}\mathbb{P}(M_n\geq cn)=0.
		\end{align}
	\end{itemize}
	In \cite[Remark 1.9]{zheng17ptrf}, Neuman and Zheng conjectured the sharpness of this transition $\underline{\delta} = \overline{\delta}$, and they verified this conjecture for a special class of branching random walks.
	\par
	In this paper, we are also interested in the branching random walk with killing, whose definition is as follows.
	For each initial position $y > 0$, consider the BRW $(\{Z_n\}_{n\geq0}, \mathbb{P}_{\delta_y})$ starting from $Z_0 = \delta_y$. We add a killing mechanism that
	particles and their descendants are killed upon first entering $(-\infty, 0]$.
	The surviving 
	population and their locations 
	 at generation $n$ is given by
	\[
	Z_n^{(0,\infty)}:= \sum_{|\omega|=n} \ind_{\{ V(v)>0,	\forall v\preceq \omega \}}\delta_{V(\omega)}.
	\]
	Similarly, define
	\[
	M_n^{(0,\infty)}:=\max_{|\omega|=i,0\leq i\leq n, \forall v\preceq\omega , V(v)>0 }V(\omega)~\text{for}~n\geq0,~M^{(0,\infty)}:=\max_{n\geq0}M_n^{(0,\infty)}.
	\]
	
	In the continuous-time setting where the spatial motion is a spectrally negative L\'{e}vy process	and that the branching rate is  $\beta > 0$,  under the assumption $\sum_{k=1}^\infty k(\log k)p_k<\infty$, it was proved in
	\cite{houzhu25, zhu25} that  there exist   constants $\gamma_*, \kappa_*\in (0,\infty)$ such that for any $y>0$,
	\begin{align}\label{Zhu-result}
		\lim_{x\to+\infty} e^{\gamma_*  x}\mathbb{P}_{\delta_y}(M^{(0,\infty)}>x)= \kappa_*W^{(\beta(1-m))}(y),
	\end{align}
	where $W^{(\beta(1-m))}$ is the scale function associated with the spectrally negative L\'{e}vy 
	process
	 with index $\beta(1-m)$.
	\par
	Our works are inspired by the results of \cite{zheng17ptrf} in branching random walks and \cite{houzhu25, zhu25} in branching L\'{e}vy processes.
	The main contributions of this paper are divided into two parts:
	\begin{itemize}
		\item[(i)] In branching random walk,
		we establish the existence of the limit $\kappa$ in \eqref{NZ-result} under weaker assumptions (Theorem~\ref{456hy5t1}), requiring only that
		$\mathbb{E}[Xe^{\gamma X}] < \infty$ for the step size and that
		$\sum_{k=1}^\infty (k\log k) p_k < \infty$ for the offspring distribution. These conditions are shown to be optimal.  	
		We also confirm the conjecture
		of
		\cite[Remark 1.9]{zheng17ptrf}  in Theorem~\ref{54th6uyju5t} below and we prove  that
		\[
		\underline{\delta} = \overline{\delta} = m\mathbb{E}[Xe^{\gamma X}],
		\]
		which yields the sharp transition between \eqref{4rr4tr3e1} and \eqref{4rr4tr3e2}.
		\item[(ii)] For branching random walk with killing,
		we extend the spectrally negative L\'{e}vy process results of \eqref{Zhu-result} to discrete-time BRW with killing
		where  only $\mathbb{E}[Xe^{\gamma X}]<\infty$  and $\sum_{k=1}^\infty k(\log k)p_k<\infty$ are required.
		Compared to \cite{zhu25}, we allow the spatial motion to have positive jump.
	\end{itemize}
	\par

	\subsection{Main results}
	Recall that we only consider the subcritical case
	$m\in (0,1)$ (hence $p_0<1$).
	Moreover, in the sequel of this paper, we always assume that there exists a constant $\gamma>0$ such that
	$$\mathbb{E}[e^{\gamma X}]=\frac{1}{m}.$$
	\par
	Our first theorem concerns the tail probability of $M$, which shows that it decays exponentially. Let $u(x):=\mathbb{P}(M>x)$ for $x\in\mathbb{R}$. Set
	$\psi(s):=\sum^{\infty}_{n=0}p_n(1-s)^n+ms-1$ for $s\in[0,1]$, where we define $0^0=1$. Denote by $\{S_n\}_{n\geq0}$ the random walk with step size $X$. Let $\mathcal{F}_n:=\sigma(S_0,S_1,...,S_n),~n\geq0$ be the natural filtration of $\{S_n\}_{n\geq0}$. Let $\mathbb{P}_x$ ($\mathbb{E}_x$) be the probability measure (expectation) under which the random walk $\{S_n\}_{n\geq0}$ has initial value $x$, and (in the absence of confusion) write $\mathbb{P}=\mathbb{P}_{0}$ for short. Since $\mathbb{E}[e^{\gamma X}]=\frac{1}{m}$, $(\mathbb{P}_0,\{\mathcal{F}_n\}_{n\geq0},\{m^ne^{\gamma S_n}\}_{n\geq0})$ is a martingale with mean $1$. Thus, we can define the probability $\widehat{\mathbb{P}}_x$ as the following:
	\begin{align}\label{5hyth6y}
		\frac{\mathrm{d}\widehat{\mathbb{P}}_x}{\mathrm{d}\mathbb{P}_x}\Big{|}_{\mathcal{F}_n}:=m^n
		e^{\gamma (S_n-x)}
		,~n\geq0.
	\end{align}
	It is easy to check that under the probability $\widehat{\mathbb{P}}_x$, $\{S_n\}_{n\geq 0}$ is a random walk with initial value $x$. Thus, the above definition indeed makes sense. The same as before, without causing confusion, write $\widehat{\mathbb{P}}=\widehat{\mathbb{P}}_0$ for short. Define $\mathcal{T}_1:= \inf\{n\geq 0: S_n >S_0\}$ and $ \mathcal{H}_1:=S_{\mathcal{T}_1}$. We will prove in \eqref{step1}  that  $\widehat{\mathbb{E}}_0(S_1)>0$. Therefore,
	\begin{align}\label{Def-of-I}
		\widehat{\mathbb{P}}_0 \left(I_\infty:= \min_{j\geq 0} S_j \in (-\infty, 0]  \right)=1.
	\end{align}
	The distribution $\widehat{\mathbb{P}}_0\big(I_\infty \in \mathrm{d}y\big)$ has been studied in \cite[p.396, (2.7)]{feller1971}.  As usual, we call a random variable $X$ is lattice if there exist $a\in\mathbb{R}$ and $h>0$ such that $\mathbb{P}(X\in\{a+kh:k\in \mathbb{Z}\})=1$, where the largest such $h$ is called the span of $X$ and $a$ is called the shift. Otherwise, we say $X$ is non-lattice. To simplify the statement of the theorem, in the sequel of this paper, when $X$ is lattice we always assume the shift $a=0$.
	\begin{theorem}\label{456hy5t1}
		Suppose that $\mathbb{E}[Xe^{\gamma X}]<\infty$ and $\sum^{\infty}_{k=1}(k\log k)p_k<\infty$ .
		\begin{itemize}
			\item[(i)]  If the step size is non-lattice, then
			\begin{align}
				\lim_{x\to\infty}e^{\gamma x}\mathbb{P}(M>x)=
				(1-p_0)\frac{1-\widehat{\mathbb{E}}_0[e^{-\gamma\mathcal{H}_1}]}{m\widehat{\mathbb{E}}_0[\mathcal{H}_1]\gamma}
				-\frac{\widehat{\mathbb E}_0\left[\int^{\infty}_{-I_\infty}
					e^{\gamma z}\psi(u(z))\mathrm{d}z\right]}{m
					\widehat{\mathbb{E}}_0[S_1]}
				\in (0,1].
			\end{align}
			\item[(ii)]  If the step size is lattice with span $h>0$, then
			$$
			\lim_{n\to\infty}e^{\gamma hn}\mathbb{P}(M>hn)=
			\frac{(1-p_0)h(1-\widehat{\mathbb{E}}_0[e^{-\gamma \mathcal{H}_1 }])}{m\widehat{\mathbb{E}}_0[\mathcal{H}_1](e^{\gamma h}-1)}
			- \frac{h}{m\widehat{\mathbb{E}}_0[S_1]} \widehat{\mathbb E}_0\Bigg[ \sum_{i=-I_{\infty}/h}^\infty  \psi(u(ih))e^{\gamma ih}
			\Bigg]\in (0,1].
			$$
		\end{itemize}
	\end{theorem}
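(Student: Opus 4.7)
The strategy is to derive a functional recursion for $u(x)=\mathbb{P}(M>x)$, recast it under the exponential tilt $\widehat{\mathbb{P}}$, and then apply renewal-theoretic tools to the resulting stopped-random-walk identity. Conditioning on the root's displacement $X$ and number of offspring $N$ gives, for $x\geq 0$,
\begin{equation*}
u(x)=m\,\mathbb{E}[u(x-X)]-\mathbb{E}[\psi(u(x-X))],
\end{equation*}
together with the boundary condition $u(x)=1$ for $x<0$, while the Markov-type bound with the martingale $\sum_{|\omega|=n}e^{\gamma V(\omega)}$ yields the a priori estimate $u(x)\leq e^{-\gamma x}$, so $U(x):=e^{\gamma x}u(x)\leq 1$. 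Multiplying the recursion by $e^{\gamma x}$ and using \eqref{5hyth6y} rewrites it as $U(x)=\widehat{\mathbb{E}}[U(x-X)]-m^{-1}\widehat{\mathbb{E}}[G(x-X)]$ for $x\geq 0$, where $G(y):=e^{\gamma y}\psi(u(y))\geq 0$. The strict convexity of the Laplace transform combined with $\mathbb{E}[e^{\gamma X}]=1/m>1$ forces $\widehat{\mathbb{E}}[X]=m\,\mathbb{E}[Xe^{\gamma X}]>0$, hence $S_n\to+\infty$ $\widehat{\mathbb{P}}$-a.s.

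I would then iterate this identity along $\{S_n\}_{n\geq 0}$ up to the first strict upcrossing time $\tau:=\inf\{n\geq 1:S_n>x\}$. Since $U(y)=e^{\gamma y}$ and $G(y)=\psi(1)e^{\gamma y}$ for $y<0$, the boundary contributions at $k=\tau$ can be collected explicitly, and $\psi(1)=p_0+m-1$ together with $1-\psi(1)/m=(1-p_0)/m$ produces the exact identity
\begin{equation*}
U(x)=\frac{1-p_0}{m}\,\widehat{\mathbb{E}}\bigl[e^{-\gamma(S_\tau-x)}\bigr]-\frac{1}{m}\,\widehat{\mathbb{E}}\Bigl[\sum_{k=1}^{\tau-1}G(x-S_k)\Bigr].
\end{equation*}
The interchange with the limit of the iteration is justified by $\widehat{\mathbb{E}}[\tau]<\infty$, $U\leq 1$, and $G\leq p_0+m-1$ on $[0,x]$. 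The first summand is handled by Feller's overshoot theorem: as $x\to\infty$ the overshoot $S_\tau-x$ converges in distribution to the stationary residual of $\mathcal{H}_1$, so $\widehat{\mathbb{E}}[e^{-\gamma(S_\tau-x)}]\to(1-\widehat{\mathbb{E}}[e^{-\gamma\mathcal{H}_1}])/(\gamma\widehat{\mathbb{E}}[\mathcal{H}_1])$ in the non-lattice case, with the obvious lattice analogue yielding the $(e^{\gamma h}-1)^{-1}$ factor in part (ii).

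For the second summand I would apply the time-reversal identity
\begin{equation*}
\widehat{\mathbb{P}}\bigl(\max_{1\leq j\leq k}S_j<x,\;S_k\in\mathrm{d}y\bigr)=\widehat{\mathbb{P}}\bigl(\min_{0\leq j\leq k}S_j>y-x,\;S_k\in\mathrm{d}y\bigr),
\end{equation*}
which, after the substitution $z=x-y$ and summation over $k$, converts $\widehat{\mathbb{E}}[\sum_{k=1}^{\tau-1}G(x-S_k)]$ into $\int G(z)\,\widehat{\mathbb{E}}\bigl[\#\{k\geq 1:S_k=x-z,\;\min_{j\leq k}S_j>-z\}\bigr]\,\mathrm{d}z$. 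On $\{z>-I_\infty\}$ the confining constraint is automatic, and Blackwell's renewal theorem gives an asymptotic density $1/\widehat{\mathbb{E}}[S_1]$; on $\{z\leq -I_\infty\}$ the walk must first enter $(-\infty,-z]$, and the corresponding expected visit count to $x-z$ vanishes as $x\to\infty$. Fubini together with the identification $\widehat{\mathbb{P}}(-I_\infty<z)=\widehat{\mathbb{P}}(\inf_n S_n>-z)$ then yields
\begin{equation*}
\lim_{x\to\infty}\widehat{\mathbb{E}}\Bigl[\sum_{k=1}^{\tau-1}G(x-S_k)\Bigr]=\frac{1}{\widehat{\mathbb{E}}[S_1]}\,\widehat{\mathbb{E}}\Bigl[\int_{-I_\infty}^\infty e^{\gamma z}\psi(u(z))\,\mathrm{d}z\Bigr].
\end{equation*}
Combining the two limits produces (i); part (ii) follows by the same argument with the lattice form of Blackwell's theorem and lattice sums of spacing $h$ replacing the Lebesgue integrals.

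The principal technical obstacle is proving direct Riemann integrability of $z\mapsto e^{\gamma z}\psi(u(z))$ on $(0,\infty)$, which is required before the key renewal theorem can be invoked. The upper bound $u(z)\leq e^{-\gamma z}$ combined with $\psi(s)\asymp s^2$ near $0$ (from $\psi(0)=\psi'(0)=0$ and $\psi''(0)=f''(1)$) delivers decay of order $e^{-\gamma z}$ at infinity, but care is needed near $z=0$ and in checking a uniform modulus of continuity; it is precisely here that the optimal moment assumption $\sum k(\log k)p_k<\infty$ enters, through the corresponding $L\log L$ integrability of the ladder-height distribution under $\widehat{\mathbb{P}}$ and the renewal function for $\mathcal{H}_1$. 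A secondary point is positivity of the limit: although the formula expresses $\kappa$ as a difference of two positive quantities, strict positivity can be extracted either directly from the representation (the subtracted renewal integral involves the quadratic factor $\psi(u)$) or by invoking the Neuman--Zheng $\liminf$ bound \eqref{NZ-result'}, which our existence-of-limit then upgrades to strict positivity.
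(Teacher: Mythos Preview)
Your overall route---recursion for $u$, exponential tilt, iteration to the first upcrossing time, overshoot asymptotics for the first piece, and a time-reversal renewal argument for the second---is exactly the paper's approach, and your stopped identity
\[
U(x)=\frac{1-p_0}{m}\,\widehat{\mathbb{E}}\bigl[e^{-\gamma(S_\tau-x)}\bigr]-\frac{1}{m}\,\widehat{\mathbb{E}}\Bigl[\sum_{k=1}^{\tau-1}G(x-S_k)\Bigr]
\]
is correct and coincides (after the shift $S_k\mapsto S_k-x$) with the paper's formula.

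There are two genuine gaps. First, your handling of the integrability of $G(z)=e^{\gamma z}\psi(u(z))$ is wrong. You claim $\psi(s)\asymp s^2$ near $0$ because $\psi''(0)=f''(1)$; but $f''(1)=\sum k(k-1)p_k$ may be infinite under the sole assumption $\sum k(\log k)p_k<\infty$, so $\psi(s)/s^2$ can blow up as $s\downarrow 0$ and your claimed $e^{-\gamma z}$ decay for $G$ fails. The $k\log k$ condition is an \emph{offspring} moment condition, not a step-size condition, and it has nothing to do with ladder heights under $\widehat{\mathbb{P}}$. What it actually buys is the finiteness of $\int_0^\infty \psi(e^{-\gamma z})e^{\gamma z}\,\mathrm{d}z$ (equivalently $\sum_n \psi(e^{-\gamma n})e^{\gamma n}<\infty$); combined with $u(z)\le e^{-\gamma z}$ and the monotonicity of $\psi$, this controls $\int G$. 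The paper then gets DRI not from decay of $G$ but from a small lemma that the product of two monotone functions times a compact-interval indicator is directly Riemann integrable, applied to $\psi(u(-\cdot))$ and $e^{-\gamma(\cdot)}$ after truncating to $[-N,0]$, with the tail controlled uniformly in $x$ by the bound $\sum_{\ell}\widehat{\mathbb{E}}_{-x}[\psi(e^{\gamma S_\ell})e^{-\gamma S_\ell}\ind_{\{S_\ell<-N\}}]\le U(1)e^\gamma\sum_{k\ge N}\psi(e^{-\gamma k})e^{\gamma k}$.

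Second, your positivity argument does not close. The Neuman--Zheng $\liminf$ bound \eqref{NZ-result'} was proved under $\sum k^3 p_k<\infty$ and a step size with all exponential moments, so you cannot invoke it under the present optimal hypotheses. Your alternative (``the subtracted integral involves the quadratic factor $\psi(u)$'') does not by itself show the difference is strictly positive. The paper instead uses the Feynman--Kac product representation
\[
e^{\gamma x}u(x)=\widehat{\mathbb{E}}_{-x}\Bigl[e^{-\gamma S_{T_0^+}}\prod_{j=1}^{T_0^+}\bigl(1-H(u(-S_j))\bigr)\Bigr],\qquad H(s)=\psi(s)/(ms),
\]
bounds $\log(1-H)$ below by $-cH$, and then shows the family $\gamma S_{T_0^+}+c\sum_{j\le T_0^+}H(u(-S_j))$ is tight in $x$: the overshoot part by a direct tightness corollary of the ladder renewal limit, and the sum part again by the uniform bound $\sum_k\psi(e^{-\gamma k})e^{\gamma k}<\infty$. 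That yields a uniform-in-$x$ positive lower bound for $e^{\gamma x}u(x)$.
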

	\begin{remark}
		\begin{itemize}
			\item[(i)] 	
			Although  Neuman and Zheng considered the tail probability $\mathbb{P}(M\geq x)$ in \eqref{NZ-result} while our Theorem \ref{456hy5t1} considers a different probability $\mathbb{P}(M>x)$, one can obtain the asymptotic behavior for $\mathbb{P}(M\geq x)$ from Theorem \ref{456hy5t1}.
			Indeed,
			if the step size is non-lattice, then for any $\delta>0$,
			\begin{align}
				&\lim_{x\to\infty}e^{\gamma x}\mathbb{P}(M>x)\leq
				\lim_{x\to\infty}e^{\gamma x}\mathbb{P}(M\geq x) \leq \lim_{x\to\infty}e^{\gamma x}\mathbb{P}(M>x-\delta )=e^{\gamma \delta}\lim_{x\to\infty}e^{\gamma x}\mathbb{P}(M> x).
			\end{align}
			Therefore, the first result of Theorem \ref{456hy5t1} remains true with $\mathbb{P}(M>x)$ replaced by $\mathbb{P}(M\geq x)$. If the step size is lattice, then naturally $\mathbb{P}(M>hn)=\mathbb{P}(M\geq h(n+1))$.
			\item[(ii)]  If $\mathbb{P}(X=1)=1$, then $\mathbb{P}(M>n)=
			\mathbb{P}(Z_{n}(\mathbb{R})>0)$.
			By \cite[p.40]{athreya72},   $\lim_{n\to\infty}\frac{1}{m^n}
			\mathbb{P}(Z_{n}(\mathbb{R})>0)
			\in(0,\infty)$ if and only if $\sum_{k=1}^\infty k(\log k)p_k<\infty$. Therefore, our assumption for the offspring law is optimal in this case. On the other hand, if $\mathbb{E}[Xe^{\gamma X}]=\infty$, then $\widehat{\mathbb{E}}_0[\mathcal{H}_1]=\widehat{\mathbb{E}}_0[S_1]=\infty$; see \cite[p.397, Theorem 2 (iii)]{feller1971}. Thus, limits in Theorem \ref{456hy5t1} equals to $0$, which means our assumption for the step size is optimal.
		\end{itemize}
	\end{remark}
	
	Our next result considers asymptotic behaviors of $M_n$ condition on $M>cn$, which exhibits a phase transition at $c=m\mathbb{E}[Xe^{\gamma X}]$.
	
	\begin{theorem}\label{54th6uyju5t}
		Assume
		$\mathbb{E}[Xe^{\gamma X}]<\infty$ and $\sum^{\infty}_{k=1}(k\log k)p_k<\infty$.
		\begin{itemize}
			\item[(i)] 	If $c\in\big(0,m\mathbb{E}[Xe^{\gamma X}]\big)$, then
			\begin{align}
				\lim_{n\to\infty}\mathbb{P}(M_n\geq cn \big| M\geq cn)=1.
			\end{align}
			\item[(ii)] 	If $c\in(m\mathbb{E}[Xe^{\gamma X}],\infty)$, then
			\begin{align}
				\lim_{n\to\infty}\mathbb{P}(M_n\geq cn \big| M\geq cn) =0.
			\end{align}
		\end{itemize}
	\end{theorem}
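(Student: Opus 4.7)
The plan is to deduce both parts from the sharp asymptotic $\mathbb{P}(M\ge cn)\sim\kappa e^{-\gamma cn}$ already supplied by Theorem~\ref{456hy5t1}, combined with elementary random-walk estimates obtained through the many-to-one lemma and the measure change \eqref{5hyth6y}. The decisive observation is that $\widehat{\mathbb{E}}_0[S_1]=m\mathbb{E}[Xe^{\gamma X}]$, so that under $\widehat{\mathbb{P}}_0$ the tilted walk has drift equal to the critical velocity in the theorem; the SLLN under $\widehat{\mathbb{P}}_0$ then cleanly distinguishes the two regimes. From Theorem~\ref{456hy5t1}, combined with the trivial bound $\mathbb{P}(M\ge y)\le 1$, I also extract the uniform tail bound $\mathbb{P}(M\ge y)\le C e^{-\gamma y}$ valid for every $y\in\mathbb{R}$ in both the lattice and non-lattice cases.

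For part (i) with $c<m\mathbb{E}[Xe^{\gamma X}]$, since $M_n\le M$ the event $\{M_n\ge cn\}$ is contained in $\{M\ge cn\}$, and it suffices to show $\mathbb{P}(M\ge cn,\,M_n<cn)=o(\mathbb{P}(M\ge cn))$. I decompose at generation $n$: on $\{M_n<cn\}$, the event $\{M\ge cn\}$ forces some descendant of a generation-$n$ particle at height $V(\omega)<cn$ to reach height $cn$ later, and by the branching Markov property plus translation invariance this is bounded by $\sum_{|\omega|=n}\mathbf{1}_{\{V(\omega)<cn\}}\mathbb{P}(M\ge cn-V(\omega))$. Substituting the uniform tail bound and then applying the many-to-one lemma and \eqref{5hyth6y}, the right-hand side becomes
\[
C e^{-\gamma cn}\,\widehat{\mathbb{P}}_0(S_n<cn).
\]
Because $\widehat{\mathbb{E}}_0[S_1]>c$, the SLLN forces $\widehat{\mathbb{P}}_0(S_n<cn)\to 0$; dividing by $\mathbb{P}(M\ge cn)\sim\kappa e^{-\gamma cn}$ closes (i).

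For part (ii) with $c>m\mathbb{E}[Xe^{\gamma X}]$, I work with the first-passage times $\tau_{\mathrm{BRW}}:=\inf\{k:M_k\ge cn\}$ and $\tau:=\inf\{k:S_k\ge cn\}$. Identifying on $\{\tau_{\mathrm{BRW}}=k\}$ at least one lineage whose strict ancestors all stay below $cn$, a union bound combined with the many-to-one identity yields $\mathbb{P}(\tau_{\mathrm{BRW}}=k)\le m^k\mathbb{P}(\tau=k)$. Converting through \eqref{5hyth6y} and using $e^{-\gamma S_\tau}\le e^{-\gamma cn}$ on $\{\tau=k\}$ gives
\[
\mathbb{P}(M_n\ge cn)\le\sum_{k=0}^n m^k\mathbb{P}(\tau=k)\le e^{-\gamma cn}\,\widehat{\mathbb{P}}_0\Big(\max_{0\le k\le n}S_k\ge cn\Big).
\]
Since $\widehat{\mathbb{E}}_0[S_1]<c$, the SLLN implies $n^{-1}\max_{k\le n}S_k\to\widehat{\mathbb{E}}_0[S_1]$ a.s.\ under $\widehat{\mathbb{P}}_0$, so the last probability vanishes and $\mathbb{P}(M_n\ge cn)=o(e^{-\gamma cn})$. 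Dividing by $\mathbb{P}(M\ge cn)\sim\kappa e^{-\gamma cn}$ gives (ii).

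The main obstacle is conceptual rather than technical: recognising that $m\mathbb{E}[Xe^{\gamma X}]$ is precisely the tilted drift, so that both regimes in Theorem~\ref{54th6uyju5t} collapse, after the change of measure, into SLLN assertions for the walk $\{S_k\}_{k\ge 0}$ under $\widehat{\mathbb{P}}_0$. The remaining bookkeeping, namely the uniform bound $\mathbb{P}(M\ge y)\le C e^{-\gamma y}$, the LLN fact that $n^{-1}\max_{k\le n}S_k$ has the same almost-sure limit as $n^{-1}S_n$ when the drift is positive (guaranteed here by $\widehat{\mathbb{E}}_0[S_1]=m\mathbb{E}[Xe^{\gamma X}]\in(0,\infty)$), and the many-to-one first-passage estimate, is routine; the lattice and non-lattice cases are handled in parallel with no essential change.
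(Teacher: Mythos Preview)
Your proposal is correct. For part (i) your argument is essentially identical to the paper's: decompose $\{M\ge cn,\,M_n<cn\}$ at generation $n$, bound via the branching Markov property and the uniform tail estimate $\mathbb{P}(M>x)\le e^{-\gamma x}$ (which the paper records directly as \eqref{5tgt4htyh5y} via Doob's inequality for the additive martingale, rather than extracting it from Theorem~\ref{456hy5t1}), apply many-to-one and the tilt \eqref{5hyth6y}, and conclude by the SLLN under $\widehat{\mathbb{P}}_0$.

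For part (ii) the approaches diverge. The paper constructs the nonnegative submartingale
\[
D_n^+:=\sum_{|u|=n}\big(V(u)-n\widehat{\mathbb{E}}_0[S_1]\big)_+e^{\gamma V(u)},
\]
applies Doob's maximal inequality to bound $e^{\gamma cn}\mathbb{P}(M_n\ge cn)$ by $\frac{1}{(c-\widehat{\mathbb{E}}_0[S_1])n}\,\widehat{\mathbb{E}}_0\big[(S_n-n\widehat{\mathbb{E}}_0[S_1])_+\big]$, and then invokes the $L^1$ ergodic theorem. Your first-passage decomposition $\mathbb{P}(\tau_{\mathrm{BRW}}=k)\le m^k\mathbb{P}_0(\tau=k)=\widehat{\mathbb{E}}_0[e^{-\gamma S_\tau}\mathbf{1}_{\{\tau=k\}}]\le e^{-\gamma cn}\widehat{\mathbb{P}}_0(\tau=k)$ reaches the same conclusion using only the almost-sure SLLN for $n^{-1}\max_{k\le n}S_k$, and is arguably more elementary; the paper's route, by contrast, packages the maximal inequality at the branching level and would more readily yield quantitative rates. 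One minor remark: in the lattice case the asymptotic $\mathbb{P}(M\ge cn)\sim\kappa e^{-\gamma cn}$ need not hold for arbitrary $c$, but all you actually use is $\liminf_n e^{\gamma cn}\mathbb{P}(M\ge cn)>0$, which follows from Theorem~\ref{456hy5t1} by the sandwiching argument the paper gives in \eqref{4gh6tt5f}.
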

	\begin{remark}
		If $X$ is non-lattice or lattice with span $1$, then
		\begin{align}
			\lim_{n\to\infty}e^{\gamma cn}\mathbb{P}(M_n\geq cn)
			&=\lim_{n\to\infty}e^{\gamma cn}{\mathbb{P}(M\geq cn)}\mathbb{P}(M_n\geq cn \big| M\geq cn)\cr
			&=
			\begin{cases}
				&\text{exists}\in(0,1],~c\in\big(0,m\mathbb{E}[Xe^{\gamma X}]\big); \\
				&0,~~~~~~~~~~~~~~~~c\in\big(m\mathbb{E}[Xe^{\gamma X}],\infty\big),
			\end{cases}
		\end{align}
		where the first equality follows from the fact $M_n\leq M$, and the second equality follows from Theorem \ref{456hy5t1} and Theorem \ref{54th6uyju5t}. Thus, we have confirmed and generalized the conjecture of  Neuman and Zheng \cite[Remark 1.9]{zheng17ptrf}.
	\end{remark}
	
	Now we are ready to state our main results for the branching random walk with killing. If $X$ is lattice with span $h$, then observe that $\mathbb{P}_{\delta_y}\left(M^{(0,\infty)}> kh  \right) =\mathbb{P}_{\delta_{ih}}\left(M^{(0,\infty)}> (k-1)h  \right)$, where $y\in (ih, (i+1)h)$ for some $i\geq1$. Therefore, in the following results, we always assume that $y=ih$ for some $i\geq1$ whenever $X$ is lattice with span $h$.
	
	\begin{theorem}\label{theom3}
		Assume
		$\mathbb{E}[Xe^{\gamma X}]<\infty$ and $\sum^{\infty}_{k=1}(k\log k)p_k<\infty$ .
		\begin{itemize}
			\item[(i)] If the step size is non-lattice, then for any $y>0$,
			\begin{align}
				\lim_{x\to\infty} \frac{\mathbb{P}_{\delta_y}(M^{(0,\infty)}>x)  }{\mathbb{P}(M>x)} =e^{\gamma y} \widehat{\mathbb{P}}_0(I_\infty>-y) .
			\end{align}
			\item[(ii)] If the step size is lattice with span $h>0$,	then for any integer $i\geq1$,
			$$
			\lim_{n\to\infty}
			\frac{\mathbb{P}_{\delta_{ih}}(M^{(0,\infty)}>hn) }{\mathbb{P}(M>hn)}=	e^{\gamma ih} \widehat{\mathbb{P}}_0(I_\infty>-ih).
			$$
		\end{itemize}
	\end{theorem}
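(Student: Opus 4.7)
The plan is to reduce the statement to a killing-ratio asymptotic and then unfold the latter along the spine via an iterated renewal identity. By translation invariance of the unkilled BRW, $\mathbb{P}_{\delta_y}(M>x)=\mathbb{P}(M>x-y)$, so Theorem~\ref{456hy5t1} immediately yields $\mathbb{P}_{\delta_y}(M>x)/\mathbb{P}(M>x)\to e^{\gamma y}$. Hence it suffices to prove
\[
\lim_{x\to\infty}\frac{\mathbb{P}_{\delta_y}(M^{(0,\infty)}>x)}{\mathbb{P}_{\delta_y}(M>x)}=\widehat{\mathbb{P}}_0(I_\infty>-y).
\]
Heuristically, since $\widehat{\mathbb{E}}_0[S_1]>0$, the ancestral trajectory of the particle realising $\{M>x\}$ behaves asymptotically as a $\widehat{\mathbb{P}}$-random walk starting from $y$, and the killing constraint amounts to requiring this walk to stay strictly positive, an event of $\widehat{\mathbb{P}}$-probability $\widehat{\mathbb{P}}_y(\min_k S_k>0)=\widehat{\mathbb{P}}_0(I_\infty>-y)$.

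To make this precise, I would introduce
\[
\tilde u_y(x):=\mathbb{P}\bigl(\exists\,\omega:\,V(\omega)>x,\ V(v)>-y\ \forall v\preceq\omega\bigr),
\]
so that $\mathbb{P}_{\delta_y}(M^{(0,\infty)}>x)=\tilde u_y(x-y)$. Conditioning on the root's $N$ offspring with displacements $X_1,\ldots,X_N$ gives the nonlinear renewal equation
\[
\tilde u_y(x)=1-\mathbb{E}\Bigl[\prod_{i=1}^N\bigl(1-\ind_{\{X_i>-y\}}\,\tilde u_{y+X_i}(x-X_i)\bigr)\Bigr].
\]
Expanding the product, using $\mathbb{E}[N]=m$ and the bound $\tilde u_y\le u=O(e^{-\gamma x})$ from Theorem~\ref{456hy5t1} to absorb the quadratic and higher-order terms, and applying the tilt $\mathrm{d}\widehat{\mathbb{P}}/\mathrm{d}\mathbb{P}|_{\mathcal F_1}=m\,e^{\gamma S_1}$, yields the one-step identity
\[
\tilde u_y(x)=\widehat{\mathbb{E}}_0\!\bigl[e^{-\gamma S_1}\ind_{\{S_1>-y\}}\,\tilde u_{y+S_1}(x-S_1)\bigr]+O(e^{-2\gamma x}).
\]
Iterating this $n$ times concatenates the independent increments into a $\widehat{\mathbb{P}}_0$-random walk and produces the spine representation
\[
\tilde u_y(x)=\widehat{\mathbb{E}}_0\!\Bigl[e^{-\gamma S_n}\,\ind_{\{\min_{1\le k\le n}(y+S_k)>0\}}\,\tilde u_{y+S_n}(x-S_n)\Bigr]+\mathcal E_n(x),
\]
with an accumulated error satisfying $\mathcal E_n(x)/u(x)\to 0$.

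Dividing by $u(x)$ and letting $x\to\infty$ first for fixed $n$, Theorem~\ref{456hy5t1} provides $u(x-S_n)/u(x)\to e^{\gamma S_n}$, so the integrand simplifies to $\tilde u_{y+S_n}(x-S_n)/u(x-S_n)$ times the indicator. Sending $n\to\infty$ afterwards: since $\widehat{\mathbb{E}}_0[S_1]>0$, $S_n\to\infty$ $\widehat{\mathbb{P}}_0$-a.s., so $y+S_n\to\infty$ and the ratio $\tilde u_{y+S_n}(x-S_n)/u(x-S_n)\to 1$ (as the killing becomes vacuous), while the indicator $\ind_{\{\min_{1\le k\le n}(y+S_k)>0\}}$ converges to $\ind_{\{I_\infty>-y\}}$. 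A dominated-convergence argument, enabled by the uniform bound $\tilde u_z\le u$, then gives $\tilde u_y(x)/u(x)\to\widehat{\mathbb{P}}_0(I_\infty>-y)$, which together with the $e^{\gamma y}$ translation factor yields part~(i). Part~(ii) is proved by running the same argument on the lattice $h\mathbb{Z}$ and invoking Theorem~\ref{456hy5t1}(ii) throughout.

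The hard part will be the rigorous justification of the double limit $x,n\to\infty$: one must keep $\mathcal E_n(x)/u(x)$ uniformly small in $n$, and produce an integrable $\widehat{\mathbb{P}}_0$-majorant for $e^{-\gamma S_n}\tilde u_{y+S_n}(x-S_n)/u(x)$. The pointwise claim $\tilde u_{y+S_n}(x-S_n)/u(x-S_n)\to 1$ also needs a separate renewal-theoretic argument (showing $e^{\gamma x}\tilde u_z(x)\to K_z$ with $K_z\uparrow K$ as $z\to\infty$) to close the circle. The optimal hypotheses $\mathbb{E}[Xe^{\gamma X}]<\infty$, which yields $0<\widehat{\mathbb{E}}_0[S_1]<\infty$ via \cite[p.\,397, Theorem~2]{feller1971}, and $\sum_k k(\log k)p_k<\infty$, which underpins the sharp asymptotic $u(x)\sim K e^{-\gamma x}$ of Theorem~\ref{456hy5t1}, are exactly what make this technical step succeed.
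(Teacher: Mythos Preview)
Your linearize-and-iterate strategy has a structural gap: the nonlinear remainder you discard is \emph{not} lower order. Under the paper's optimal hypothesis $\sum_k k(\log k)p_k<\infty$ alone, $\psi(s)$ need not be $O(s^2)$ (that would require finite offspring variance), so the one-step error is not $O(e^{-2\gamma x})$; at best it is $o(e^{-\gamma x})$ for each fixed step. More importantly, the accumulated $\psi$-contribution $\mathcal E_n(x)$ is \emph{not} uniformly small in $n$: for fixed $x$, letting $n\to\infty$ it converges to a strictly positive multiple of $e^{-\gamma x}$ (compare the second term in the limit of Theorem~\ref{456hy5t1}). The $\psi$-term is part of the leading-order constant, not an error. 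Hence the two limits $x\to\infty$ and $n\to\infty$ do not commute in the way your sketch needs, and your ``main term'' $\widehat{\mathbb E}_0[e^{-\gamma S_n}\ind_{\{\cdots\}}\tilde u_{y+S_n}(x-S_n)]/u(x)$ cannot be evaluated without already knowing that $\lim_x e^{\gamma x}\tilde u_z(x)$ exists---which is the theorem you are trying to prove. Your final paragraph correctly flags this circularity but offers no way to break it.

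The paper avoids both problems by iterating the first-step equation to $n=\infty$ \emph{without} linearizing, keeping the $\psi$-term exact. This yields a closed identity expressing $e^{\gamma(x-y)}u(y,x)$ as the difference of two explicit renewal-type functionals of $(\{S_\ell\},\widehat{\mathbb P}_{y-x})$ (Section~4, equation for $e^{\gamma(x-y)}u(y,x)$). Each functional is then shown to converge by separate renewal theorems (Lemmas~\ref{lem3} and~\ref{lem3'}); the second, which handles the $\psi$-sum, relies crucially on the integrability $\int_0^\infty \psi(e^{-\gamma z})e^{\gamma z}\,\mathrm dz<\infty$ from Lemma~\ref{43tt67i8de3}, which is exactly where the $k\log k$ condition enters. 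Both limits factor as $\widehat{\mathbb P}_0(I_\infty>-y)$ times the corresponding piece of the Theorem~\ref{456hy5t1} constant, so the ratio drops out to $e^{\gamma y}\widehat{\mathbb P}_0(I_\infty>-y)$ with no circularity. (A minor point: in the paper's jump-then-branch model all children of a given parent share one displacement $X$, so your first-step equation with independent $X_i$ is not quite the right one here, though this is easily corrected.)
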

	
	Finally, the last theorem considers the long time behaviour of the maximum
	from
	time $0$ to $n$ of the branching random walk with killing, whose results are almost the same as those of the branching random walk without killing.
	\begin{theorem}\label{theom4}
		Assume
		$\mathbb{E}[Xe^{\gamma X}]<\infty$ and $\sum^{\infty}_{k=1}(k\log k)p_k<\infty$. Then, the results of Theorem \ref{54th6uyju5t} hold with $(M_n, M, \mathbb{P})$ replaced by $(M_n^{(0,\infty)}, M^{(0,\infty)},
		\mathbb{P}_{\delta_{y}})$, where $y$ is any positive constant if $X$ is non-lattice and $y=ih$ for some $i\geq1$ if $X$ is lattice with span $h$.
	\end{theorem}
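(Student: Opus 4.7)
The proof follows the two-step scheme used for Theorem~\ref{54th6uyju5t}, replacing the denominator $\mathbb{P}(M\geq cn)$ by its killed analog $\mathbb{P}_{\delta_y}(M^{(0,\infty)}\geq cn)$, which Theorems~\ref{456hy5t1} and~\ref{theom3} pin down as
\[
\mathbb{P}_{\delta_y}(M^{(0,\infty)}\geq cn)\sim \kappa\,e^{\gamma y}\,\widehat{\mathbb{P}}_0(I_\infty>-y)\,e^{-\gamma cn}
\]
(with the obvious modification in the lattice case). In both parts it suffices to compare the numerator $\mathbb{P}_{\delta_y}(M_n^{(0,\infty)}\geq cn)$ with this denominator, using the natural dominations $M_n^{(0,\infty)}\leq M_n$ and $M^{(0,\infty)}\leq M$ under $\mathbb{P}_{\delta_y}$ and the many-to-one lemma combined with the exponential change of measure~\eqref{5hyth6y}. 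Write $\widehat m:=m\mathbb{E}[Xe^{\gamma X}]$ throughout.

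\textit{Part (ii), $c>\widehat m$.} Since $M_n^{(0,\infty)}\leq M_n$, translation invariance yields
\[
\mathbb{P}_{\delta_y}(M_n^{(0,\infty)}\geq cn)\leq \mathbb{P}_{\delta_y}(M_n\geq cn)=\mathbb{P}(M_n\geq cn-y).
\]
The argument used to prove Theorem~\ref{54th6uyju5t}(ii), combined with Theorem~\ref{456hy5t1}, gives $e^{\gamma a_n}\mathbb{P}(M_n\geq a_n)\to 0$ for any sequence $a_n/n\to c>\widehat m$; apply it with $a_n=cn-y$. Dividing by the denominator above yields the conditional probability tends to $0$.

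\textit{Part (i), $c<\widehat m$.} It suffices to show that the ``discrepancy term''
\[
\mathbb{P}_{\delta_y}\bigl(M^{(0,\infty)}\geq cn,\,M_n^{(0,\infty)}<cn\bigr)=o\bigl(\mathbb{P}_{\delta_y}(M^{(0,\infty)}\geq cn)\bigr).
\]
Applying the branching Markov property at generation $n$, on $\{M_n^{(0,\infty)}<cn\}$ each surviving particle $\omega$ at generation $n$ lies in $(0,cn)$, and the probability that its descendants in the killed BRW from $V(\omega)$ ever reach $[cn,\infty)$ is dominated, via Theorem~\ref{456hy5t1}, by $\mathbb{P}(M\geq cn-V(\omega))\leq Ce^{-\gamma(cn-V(\omega))}$. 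A union bound over surviving $\omega$ at generation $n$ gives
\[
\mathbb{P}_{\delta_y}\bigl(M^{(0,\infty)}\geq cn,\,M_n^{(0,\infty)}<cn\bigr)\leq Ce^{-\gamma cn}\,\mathbb{E}_{\delta_y}\!\!\left[\sum_{\substack{|\omega|=n,\ V(v)>0\,\forall v\preceq\omega\\ V(\omega)<cn}}e^{\gamma V(\omega)}\right].
\]
Translating the tree by $-y$ and applying the many-to-one lemma with the change of measure~\eqref{5hyth6y}, the last expectation rewrites as
\[
e^{\gamma y}\,\widehat{\mathbb{P}}_0\bigl(S_j>-y\ \forall\,0\leq j\leq n,\ S_n<cn-y\bigr)\leq e^{\gamma y}\,\widehat{\mathbb{P}}_0(S_n<cn-y).
\]
Since $\widehat{\mathbb{E}}_0[S_1]=\widehat m>c$ and $\widehat{\mathbb{E}}_0|S_1|=m\mathbb{E}[|X|e^{\gamma X}]<\infty$, the strong law of large numbers under $\widehat{\mathbb{P}}_0$ forces $\widehat{\mathbb{P}}_0(S_n<cn-y)\to 0$, so the discrepancy is $o(e^{-\gamma cn})$, negligible compared with the $e^{-\gamma cn}$-order denominator. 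Part~(i) follows.

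\textit{Main obstacle.} The delicate step in part~(i) is the joint handling of the killing constraint $V(v)>0$ and the truncation $V(\omega)<cn$ under the many-to-one/change-of-measure step; after the shift by $-y$ this reduces cleanly to a random-walk probability below the positive $\widehat{\mathbb{P}}_0$-mean, controlled by SLLN. A secondary but real technical point is that part~(ii) requires Theorem~\ref{54th6uyju5t}(ii) at the shifted threshold $a_n=cn-y$; this follows automatically if the original proof yields an explicit decay rate (as is typical of many-to-one first-passage estimates), and otherwise one reruns that argument verbatim with the sliding threshold $a_n$. In both parts, the lattice case $y=ih$ is treated identically, using the lattice versions of Theorems~\ref{456hy5t1} and~\ref{theom3}.
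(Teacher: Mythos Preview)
Your proposal is correct and follows essentially the same route as the paper: for $c>\widehat m$ you dominate $M_n^{(0,\infty)}$ by the unkilled $M_n$ shifted by $y$ and rerun the $D_n^+$-submartingale/Doob bound from Theorem~\ref{54th6uyju5t}(ii) at the threshold $cn-y$ (exactly what the paper writes out explicitly), and for $c<\widehat m$ you control the discrepancy via the branching Markov property, the union bound, many-to-one plus the tilt~\eqref{5hyth6y}, and the SLLN under $\widehat{\mathbb{P}}_0$, landing on the same estimate $e^{\gamma y}\widehat{\mathbb{P}}_0(S_n<cn-y)\to 0$. The only cosmetic difference is that the paper uses the elementary martingale bound \eqref{5tgt4htyh5y} (giving $C=e^\gamma$) rather than invoking Theorem~\ref{456hy5t1} for the inequality $\mathbb{P}(M\geq x)\leq Ce^{-\gamma x}$.
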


	\subsection{Discussion on the literature}\label{S1.3}
	
	The tail asymptotics of maximal displacements in branching random walks and branching L\'{e}vy processes have been extensively studied. The earliest foundational work dates back to Sawyer and Fleischman \cite{sawyer1979}, who employed analytic methods to investigate the maximal displacement $M$ in critical and subcritical branching Brownian motions under a third moment condition $\sum_{k=0}^\infty k^3p_k<\infty$ on the offspring distribution.
	
	For branching random walks, the analytic techniques in \cite{sawyer1979} are no longer applicable. In the critical case, significant progress was made by
	Lalley and Shao \cite{lalley15ptrf} and Zhang \cite{zhang25} who utilized discrete Feynman-Kac formulas to establish that when the step size $X$ admits finite fourth moment $\mathbb{E}[X^4] < \infty$ and that  $\sum_{k=0}^\infty k^2 p_k < \infty$,
	the tail probability exhibits polynomial decay:
	\begin{equation}\label{eq:critical_tail}
		\lim_{x \to \infty} x^2 \mathbb{P}(M > x) = \frac{6 \text{Var}(X)}{\text{Var}(Z_1(\mathbb{R}))}.
	\end{equation}
	The asymptotic properties of $M_n$ have been studied in various regimes by Kesten \cite{Kesten1995} in critical case
	and by Neuman and Zheng \cite{zheng21ptrf} in near-critical case.
	
	Analogous results in critical and subcritical branching L\'{e}vy processes are developed. Profeta \cite{profeta24bernoulli}
	employed scale functions to derive precise tail asymptotics for both subcritical and critical branching spectrally negative L\'{e}vy processes.
	For the case where $\sum_{k=0}^\infty k^2p_k=\infty$, Hou et al.  \cite{hou25} and Profeta \cite{Profeta2025} studied the critical branching L\'evy process with stable offspring law.
	\par
	
	We now review some literatures addressing cases where the spatial motion lacks finite second 
	moment.
	For critical branching $\alpha$-stable processes with $\alpha \in (0,2)$,
	Lalley and Shao \cite{lalley16} first studied the symmetric $\alpha$-stable case, proving the polynomial tail decay:
	\begin{equation}\label{eq:stable-tail}
		\lim_{x\to\infty} x^{\alpha/2}\mathbb{P}(M > x) = \sqrt{\frac{2}{\alpha}}.
	\end{equation}
	Profeta \cite{profeta22alea,Profeta2025} extended \eqref{eq:stable-tail} to
	critical and subcritical branching L\'evy processes with positive heavy-tailed jumps.  For critical and subcritical branching spectrally negative $\alpha$-stable processes, see  Hou et al. \cite{HJRS25}.
	\par
	
	In the supercritical regime, since $M = \infty$ almost surely on the survival set, research mainly focuses on the asymtotic behavior of $M_n$.
	In super-Brownian motion, Pinsky \cite{pinsky95}
	(although the definition of $M_t$ is slightly different from ours, one can compare \cite[Corollary 3.2]{KLMR} for the lower bound)
	proved that $M_t$ grows linearly with $t$. Later, Zhang \cite{zhangecp2024} obtained decay rates for $\mathbb{P}(M_t \geq ct)$ when $c$ exceeds a critical threshold.
	Similar large deviation results were proved by \.{O}z et al. \cite{mehet17} in branching Brownian motion.
	\par
	
	For branching random walks and branching L\'{e}vy processes with killing, as previously mentioned, results in the subcritical case $m\in (0,1)$ were established by \cite{houzhu25,zhu25}. In the critical case $m=1$, Lalley and Zheng \cite{lalley-zheng15} first studied the Brownian motion case, determining the tail probability of $M^{(0,\infty)}$ using analytical methods. Later, the results of \cite{lalley-zheng15} was generalized by Hou et. al. \cite{hou25+}  through probabilistic techniques. In supercritical case $m>1$, under the condition that the step size has negative drift (ensuring finite-time extinction),
	A\"{i}d\'{e}kon et al. \cite{AHO13} characterized the tail behavior of $M^{(0,\infty)}$.  In the continuous-time setting, we refer to \cite{MS25+} and the references therein.

	\subsection{Proof strategies of the main results}
	
	Our approach differs fundamentally from previous methods employing Feynman-Kac formulas
	and Fekete's subadditive lemma in \cite{zheng17ptrf,houzhu25,zhu25}. The key components of our proof are as follows.
	
	In the proof of Theorem \ref{456hy5t1}, firstly we utilize height random variables and the
    renewal theory to  show that the Laplace transform of the overshoot $(S_{T^+_0}, \widehat{\mathbb{P}}_{-x})$ converges as $x\to+\infty$, where $T_0^+:=\inf\{n\in \mathbb{N}: S_n>0\}$; see Lemma  \ref{lem1} below. We also prove another renewal theorem of $(\{S_n\}_{n\geq 0}, \widehat{\mathbb{P}}_{-x})$ 
    and we show the existence of the limit $\sum_{\ell=1}^\infty  \widehat{\mathbb{E}}_{-x}\left(f(S_\ell)\ind_{\{\max_{0\leq j\leq \ell} {S}_j \leq 0\}} \right)$ as $x\to +\infty$  in Lemma  \ref{lem1}.
	Then we express $\mathbb{P}(M > x)$
	as functionals
	of $(\{S_n\}_{n\geq0}, \widehat{\mathbb{P}}_x)$. Finally, we analyze these functionals directly according to renewal 
	theorems
	 and prove that the
	limit $\lim_{x\to \infty} e^{\gamma x}\mathbb{P}(M>x)$
	exists. The Feynman-Kac formula is only used in the proof for the positivity of the  limit.
	
	In the proof of Theorem \ref{54th6uyju5t}, if $c \in (0, m\mathbb{E}[Xe^{\gamma X}])$, then we apply the many-to-one formula (see \eqref{many-to-one-formula} below) and Markov property to
	show the identical asymptotics for $\mathbb{P}(M_n \geq cn)$ and $\mathbb{P}(M \geq cn)$. For $c \in (m\mathbb{E}[Xe^{\gamma X}], \infty)$, we construct a  submartingale
	$\{D_n^+\}_{n\geq0}$ (see \eqref{Def-of-D-n} below).
	Then from $\{M_n\geq cn\}\subset \{ \max_{s\leq n} D_s^+\geq (c-\widehat{\mathbb{E}}_0[S_1])ne^{\gamma cn}\}$ and Doob's maximal inequality, we find $e^{\gamma cn}\mathbb{P}(M_n\geq cn)$ can be bounded above by the moment of $D_n^+$. Now  the ergodic theorem yields that  $\lim_{n\to+\infty} e^{\gamma cn}\mathbb{P}(M_n\geq cn)=0$. 
	
	In the proofs of Theorems \ref{theom3} and \ref{theom4} for analogous results of branching random walk with killing, the proof strategy for the tail probability of $M$ is applicable to $M^{(0,\infty)}$, so  we  follow a similar way in the proof of Theorem \ref{theom3}. The proof of  Theorem \ref{theom4} is similar to that of Theorem \ref{54th6uyju5t}.

	\section{Properties for random walk with positive drift}
	
	Let $U(\mathrm{d}y):=\sum^{\infty}_{n=0}\widehat{\mathbb{P}}_0(S_n\in \mathrm{d}y)$. Denote by $U(y):=U((-\infty,y]),~y\in\mathbb{R}$ the renewal function of $\{S_n\}_{n\geq0}$. By the Markov inequality, for any $y\in \mathbb{R}$,
	\begin{align}\label{45gtzy6uy6}
		U(y)&=\sum^{\infty}_{n=0}\widehat{\mathbb{P}}_0(S_n\leq y)
		\leq \sum^{\infty}_{n=0}\widehat{\mathbb{E}}_0[e^{\gamma(y-S_n)}]\cr
		&=e^{\gamma y}\sum^{\infty}_{n=0}(\widehat{\mathbb{E}}_0[e^{-\gamma X}])^n
		=e^{\gamma y}\sum^{\infty}_{n=0}m^n<\infty,
	\end{align}
	where the last equality follows from (\ref{5hyth6y}). Thus, $U(y)$ is well-defined. The following lemma presents several properties of the renewal function $U$.
	\begin{lemma}\label{4rtgt5gt}
		Assume
		$\mathbb{E}[Xe^{\gamma X}]<\infty$.
		\begin{itemize}
			\item[(i)] $U(x+y)\leq U(x)+U(y)$ for any $x,y\in\mathbb{R}$.
			
			\item[(ii)] $U(x)\leq (x+1)U(1)$ for all $x>1$.
		\end{itemize}
	\end{lemma}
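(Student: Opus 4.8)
The plan is to exploit the subadditive structure already visible in the bound \eqref{45gtzy6uy6}, together with the fact that under $\widehat{\mathbb{P}}_0$ the walk $\{S_n\}_{n\geq 0}$ has strictly positive drift $\widehat{\mathbb{E}}_0[S_1]\in(0,\infty)$ (the finiteness being exactly what $\mathbb{E}[Xe^{\gamma X}]<\infty$ guarantees, cf.\ \cite[p.397, Theorem 2]{feller1971}). For part (i), I would condition on the first strictly ascending ladder epoch. Since the drift is positive, $\mathcal{T}_1=\inf\{n\geq 0: S_n> S_0\}$ is $\widehat{\mathbb{P}}_0$-a.s.\ finite and the successive ladder heights are i.i.d.; writing the renewal measure $U$ as the convolution of the ladder-height renewal measure with the (finite) pre-ascending occupation measure, one gets for $x\le 0<y$ (the only nontrivial range, since $U$ is nondecreasing and $U(z)\le e^{\gamma z}/(1-m)$ makes the claim trivial when $x,y$ have the same sign or one is very negative) a decomposition $U(x+y)=\int U_-(x+y-u)\,\nu(\mathrm{d}u)$ where $\nu$ is supported on $[0,\infty)$; the standard renewal inequality $\sum_{n}\widehat{\mathbb{P}}_0(S_n\le x+y)\le \sum_n\widehat{\mathbb{P}}_0(S_n\le x)+\sum_n\widehat{\mathbb{P}}_0(S_n\le y)$ then follows by splitting each path realizing $S_n\le x+y$ at the last time $\le n$ its running maximum is $\le x$, or an analogous first-passage decomposition. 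Concretely: every $n$ with $S_n\le x+y$ is counted once; using the Markov property at the first time $\sigma$ that $S_\sigma>x$ (if $S_n$ already exceeded $x$ at some earlier epoch) shows the contribution is dominated by $U(x)+\sup_{z}\int \ind_{\{z+w\le x+y\}}U(\mathrm{d}w)\le U(x)+U(y)$ by translation monotonicity and $x<0$. I expect the cleanest route is in fact the elementary one: $U(x+y)=\sum_n\widehat{\mathbb{P}}_0(S_n\le x+y)$, and partitioning on whether $\max_{k\le n}S_k\le x$ or not, bounding the first part by $U(x)$ and the second, via the strong Markov property at the first entrance of $(x,\infty)$ plus $U(\cdot+x)\ge U(\cdot)$ shifted appropriately, by $U(y)$.

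For part (ii), once (i) is available it is immediate by iteration: for integer $k$ with $k\le x<k+1$, apply subadditivity $k$ times to get $U(x)\le U(x-k+1)+ (k-1)U(1)$... more carefully, $U(x)\le U(1)+U(x-1)\le 2U(1)+U(x-2)\le\cdots$, and stopping once the argument drops to $(0,1]$ gives $U(x)\le \lceil x\rceil U(1)\le (x+1)U(1)$ for $x>1$, using $U$ nondecreasing so that $U(x-k)\le U(1)$ when $x-k\le 1$. The only point requiring a word is that $U(1)<\infty$, which is contained in \eqref{45gtzy6uy6}.

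The main obstacle is part (i): the subadditivity $U(x+y)\le U(x)+U(y)$ is a classical fact for renewal functions of \emph{nonnegative} random variables, but here the steps $X$ can take negative (indeed arbitrary real) values — only the $\widehat{\mathbb{P}}_0$-drift is positive — so one cannot invoke the textbook statement directly and must instead work with the strictly ascending ladder structure (finiteness of ladder epochs, i.i.d.\ ladder heights) to reduce to the nonnegative case, carefully controlling the finite pre-ladder occupation measure. I would handle the degenerate ranges (one of $x,y$ nonpositive, or both negative) separately using monotonicity of $U$ and the crude exponential bound from \eqref{45gtzy6uy6}, so that the genuine renewal argument is only needed when $x,y>0$, where it is exactly the standard subadditivity of the ascending renewal function.
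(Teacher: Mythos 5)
Your ``elementary route'' is correct and is exactly the paper's proof: partition the occupation count $\sum_n \ind_{\{S_n\le x+y\}}$ according to whether $n<T_x^+=\inf\{k:S_k>x\}$ (equivalently $\max_{k\le n}S_k\le x$), bound the first part by $U(x)$, and apply the strong Markov property at $T_x^+$ together with $S_{T_x^+}>x$ to bound the second by $U(y)$; part (ii) is the same floor-function iteration. The initial ladder-height convolution idea is an unnecessary detour, and the degenerate ranges (one of $x,y$ nonpositive) are, as you note, handled by monotonicity and nonnegativity of $U$ alone, which is why the paper restricts to $x,y>0$ without further comment.
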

	\begin{proof}
		(i) Since $U$ is a non-negative increasing function in $\mathbb{R}$, we only prove the case $x,y>0$.
		Noticing that $g(u):= u\log u$ is a convex function on $(0,\infty)$.
		Thus, by Jensen's inequality, it follows from \eqref{5hyth6y} that
		\begin{align}\label{step1}
			\widehat{\mathbb{E}}_0(S_1) = m \mathbb{E}(Xe^{\gamma X})= \frac{m}{\gamma} \mathbb{E}\left(g(e^{\gamma X})\right)\geq  \frac{m}{\gamma} g\left(\mathbb{E}(e^{\gamma X})\right) = \frac{1}{\gamma}\log \frac{1}{m}>0.
		\end{align}
		Therefore,  by the strong law of large numbers, we have $\lim_{n\to\infty}S_n=+\infty$, $\widehat{\mathbb{P}}_0$-almost surely, which implies that  for any $x>0$, $\widehat{\mathbb{P}}_0$-almost surely,
		\begin{align}\label{Def-of-T+}
			T_x^+:=\min\{n\geq0: S_n>x\}<\infty.
		\end{align}
		According to the definition of $T_x^+$, for every $x, y>0$, it holds that
		\begin{align}\label{4gt5ty6}
			\sum^{T_x^+-1}_{n=0}\ind_{\{S_n\leq x+y\}}= \sum^{T_x^+-1}_{n=0}\ind_{\{S_n\leq x\}}\leq \sum^{\infty}_{n=0}\ind_{\{S_n\leq x\}}.
		\end{align}
		Noticing that on $\{S_n \leq x+y\}$, we have $S_{T_x^+} >x$. Therefore,
		\begin{align}
			S_n-S_{T_x^+}\leq x+y-x\leq y,
		\end{align}
		which implies that
		\begin{align}\label{erf4ghy4}
			\sum^{\infty}_{n=T_x}\ind_{\{S_n\leq x+y\}}
			\leq\sum^{\infty}_{n=T_x}\ind_{\{S_n-S_{T_x}\leq y\}}
			\leq\sum^{\infty}_{k=0}\ind_{\{S_{k+T_x}-S_{T_x}\leq y\}}.
		\end{align}
		Combining \eqref{4gt5ty6} and \eqref{erf4ghy4}, we obtain that
		\begin{align}\label{45thyh6j}
			\sum^{\infty}_{n=0}\ind_{\{S_n\leq x+y\}}\leq \sum^{\infty}_{n=0}\ind_{\{S_n\leq x\}}+\sum^{\infty}_{k=0}\ind_{\{S_{k+T_x}-S_{T_x}\leq y\}}.
		\end{align}
		By the strong Markov property,
		$\{S_{k+T_x}-S_{T_x}\}_{k\geq0}$
		is a random walk with the same distribution as $\{S_k\}_{k\geq0}$. Thus, taking expectation on both sides of \eqref{45thyh6j} yields that
		$$
		U(x+y)\leq U(x)+U(y).
		$$
		This completes the proof of (i).

		(ii)
		Denote by $\lfloor x\rfloor$ the greatest integer less than or equal to $x$. It follows from (i) that  for any $x\geq1$,
		\begin{align}
			U(x)=U(\lfloor x\rfloor+x-\lfloor x\rfloor)\leq U(\lfloor x\rfloor)+U(x-\lfloor x\rfloor)\leq\lfloor x\rfloor U(1)+U(1)\leq(x+1)U(1),\nonumber
		\end{align}
		where in the second inequality we  used the fact that $U$ is increasing on $\mathbb{R}$.
	\end{proof}
	
	The following lemma is the classical renewal theorem, which can be found in \cite[Theorem 3.1, p.169 and Proposition 3.4, p.171]{Revuz1984}. For $n\geq0$, put $S^Y_n:=Y_0+...+Y_n$, where $Y_i,i\geq1$ are i.i.d. and $Y_0$ is the initial value. Let $\mathbf{P}_x$ ($\mathbf{E}_x$) be the probability measure (expectation) such that the random walk $\{S^Y_n\}_{n\geq0}$ has initial value $x$. Put
	\[
	U^Y(\mathrm{d} y):=  \sum^{\infty}_{n=0}\mathbf{P}_{-x}(S_n^Y \in \mathrm{d} y),~x>0.
	\]
	
	\begin{lemma}\label{Lem-renewal-theorem} Assume $\mathbf{E}_0(Y_1)>0$.
		
		\begin{itemize}
			\item[(i)] If $(Y_1,\mathbb{P}_0)$ is non-lattice, then
			$U^Y(\mathrm{d} y)$ converges vaguely to $\frac{1}{\mathbf{E}_0[Y_1]}\mathrm{d} y$ as $x\to+\infty$. In particular, for any $y>0$ and $c\in\mathbb{R}$, $$\lim_{x\to+\infty} U^Y((c,c+y])=\frac{1}{\mathbf{E}_0[Y_1]}y.$$
			\item[(ii)] If $(Y_1,\mathbb{P}_0)$ is lattice with span $h$, then
			for any $i\in \mathbb{Z}$,
			\begin{align}
				\lim_{k\to+\infty} \sum_{n=0}^\infty \mathbf{P}_{-kh}(S_n^Y= ih) = \frac{h}{\mathbf{E}_0[Y_1]}.
			\end{align}
		\end{itemize}
	\end{lemma}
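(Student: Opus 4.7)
The plan is to reduce both parts to Blackwell's renewal theorem for walks with positive drift. Under $\mathbf{P}_{-x}$ we have $S_n^Y = -x + \sum_{i=1}^n Y_i$, so for every Borel set $A \subseteq \mathbb{R}$,
\[
U^Y(A) = \sum_{n=0}^\infty \mathbf{P}_0\Big(\sum_{i=1}^n Y_i \in A + x\Big) = V(A+x),
\]
where $V$ denotes the renewal measure of the centered-starting walk $\widetilde S_n := \sum_{i=1}^n Y_i$ (with $\widetilde S_0 = 0$). The statement thus reduces to the behaviour of $V$ on intervals receding to $+\infty$.

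For part (i), since $\mathbf{E}_0[Y_1] > 0$, the strong law of large numbers gives $\widetilde S_n \to \infty$ almost surely, so the strict ascending ladder epochs $\tau_1 < \tau_2 < \cdots$ are a.s.\ finite and the ladder heights $H_k := \widetilde S_{\tau_k} - \widetilde S_{\tau_{k-1}}$ form an i.i.d.\ sequence of strictly positive random variables. Decomposing each trajectory along its ladder epochs yields the convolution identity $V = U_H * G$, where $U_H$ is the renewal measure of the ladder height random walk and $G$ is the expected occupation measure of a single excursion strictly below the running maximum (a finite measure supported in $(-\infty, 0]$ of total mass $\mathbf{E}_0[\tau_1]$). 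When $Y_1$ is non-lattice so is $H_1$, and Blackwell's theorem gives $U_H((a, a+y]) \to y/\mathbf{E}_0[H_1]$ as $a \to \infty$. Combining this with Wald's identity $\mathbf{E}_0[H_1] = \mathbf{E}_0[\tau_1]\,\mathbf{E}_0[Y_1]$ and dominated convergence in the convolution yields $V((c+x, c+x+y]) \to y/\mathbf{E}_0[Y_1]$, which is the claim. Vague convergence of the full renewal measure then follows by approximating compactly supported continuous functions by step functions.

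Part (ii) is treated analogously with Blackwell's theorem replaced by its lattice version: for a lattice walk with span $h$ and positive mean, $U_H(\{ih\}) \to h/\mathbf{E}_0[H_1]$ as $i \to \infty$, and convolution against the finite-mass lattice measure $G$ produces $\sum_{n=0}^\infty \mathbf{P}_{-kh}(S_n^Y = ih) \to h/\mathbf{E}_0[Y_1]$ via the shift identity above.

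The main obstacle — shared with any proof of Blackwell's theorem itself — is the passage from the elementary renewal theorem, which only delivers Ces\`aro averages, to pointwise asymptotics of $U_H((a, a+y])$; this is typically achieved via the Lindvall coupling in the non-lattice case or by Fourier analysis of the renewal equation. Since the statement here is exactly Theorem~3.1 (p.~169) and Proposition~3.4 (p.~171) of \cite{Revuz1984}, in the paper we simply invoke these references rather than reprove them.
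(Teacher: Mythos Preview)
Your proposal is correct, and you have already identified exactly what the paper does: it does not prove this lemma at all but simply cites \cite[Theorem 3.1, p.~169 and Proposition 3.4, p.~171]{Revuz1984}. Your ladder-height sketch is a sound outline of the standard proof and goes beyond what the paper provides, but since the lemma is stated as a black-box classical result, the citation alone suffices.
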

	
	For each $n\geq 0$, define
	\begin{align}\label{Def-of-max-S}
		\overline{S}_n:= \max_{0\leq j\leq n}S_j\quad\mbox{and}\quad \underline{S}_n:= \min_{0\leq j\leq n}S_j.
	\end{align}
	Different from the classical renewal theorem in Lemma \ref{Lem-renewal-theorem}, our next lemma presents a renewal theorem associated with  the maximum of the random walk $\{S_j\}_{j\geq0}$ up to time $n$.

	\begin{lemma}\label{lem2}
		Assume  $\mathbb{E}[Xe^{\gamma X}]<\infty$.
		
		\begin{itemize}
			\item[(i)] 	If $X$ is non-lattice, then for any 	$b<a\leq 0$,
			\begin{align}
				\lim_{x\to+\infty}\sum^{\infty}_{n=1}\widehat{\mathbb{P}}_{-x}\Big(\overline{S}_n \leq 0,
				S_n\in (b, a] \Big)
				&=\frac{1}{\widehat{\mathbb{E}}_0[S_1]}\int_{(b, a]}   \widehat{\mathbb{P}}_0 \left(I_\infty \in [z,0]\right) \mathrm{d} z,
			\end{align}
			where $I_\infty$ is given as in \eqref{Def-of-I}.
			\item[(ii)] If $X$ is lattice with span $h$, then for any	$i\in\mathbb{Z}, i\leq 0$,
			\begin{align}
				\lim_{k\to+\infty}\sum^{\infty}_{n=1}\widehat{\mathbb{P}}_{-kh}\Big(\overline{S}_n \leq 0,
				S_n= ih \Big)
				&= \frac{h}{\widehat{\mathbb{E}}_0[S_1]}\widehat{\mathbb{P}}_0(I_\infty\in[ih, 0]).
			\end{align}
		\end{itemize}
		
	\end{lemma}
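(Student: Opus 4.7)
The plan is to reduce the sum to a renewal integral against the strict ascending ladder height measure of $(\{S_n\}, \widehat{\mathbb{P}}_0)$, apply the key renewal theorem to take $x \to \infty$, and identify the limiting constant via Feller's duality for the killed walk. Let $\mathcal{H}_1 = S_{\mathcal{T}_1}$ denote the first strict ascending ladder height under $\widehat{\mathbb{P}}_0$ and write $V_H$ for the renewal measure of the $\mathcal{H}_1$-walk. Under $\mathbb{E}[X e^{\gamma X}] < \infty$, standard moment estimates give $\widehat{\mathbb{E}}_0[\mathcal{H}_1] < \infty$, and Wald's identity (together with $\widehat{\mathbb{E}}_0[S_1] > 0$ from \eqref{step1}) yields $\widehat{\mathbb{E}}_0[\mathcal{H}_1] = \widehat{\mathbb{E}}_0[\mathcal{T}_1] \cdot \widehat{\mathbb{E}}_0[S_1]$.

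First, I would take $x > -b$ so that $S_0 = -x \notin (b, a]$. The strong Markov property applied at each successive strict ascending ladder epoch of the walk started at $-x$, together with translation invariance, produces the decomposition
\[
\sum_{n=1}^\infty \widehat{\mathbb{P}}_{-x}\bigl(\overline{S}_n \leq 0, \, S_n \in (b, a]\bigr) \;=\; \int_{[0, x]} V_H(du)\, h(b + x - u, \, a + x - u),
\]
where $h(\alpha, \beta) := \widehat{\mathbb{E}}_0\bigl[\sum_{n = 0}^{\mathcal{T}_1 - 1} \ind_{\{S_n \in (\alpha, \beta]\}}\bigr]$. Since $S_n \leq 0$ for $n < \mathcal{T}_1$, the kernel $w \mapsto h(b + w, a + w)$ is supported on $[0, -b]$ and uniformly bounded by $\widehat{\mathbb{E}}_0[\mathcal{T}_1] < \infty$, hence directly Riemann integrable.

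Next, Blackwell's renewal theorem for $V_H$ in the non-lattice case (where $\mathcal{H}_1$ inherits non-lattice-ness from $X$) and its lattice analogue Lemma~\ref{Lem-renewal-theorem}(ii) in the lattice case give
\[
\lim_{x \to \infty} \int_{[0, x]} V_H(du)\, h(b + x - u, a + x - u) \;=\; \frac{1}{\widehat{\mathbb{E}}_0[\mathcal{H}_1]} \int_0^{-b} h(b + w, a + w) \, dw.
\]
By the classical duality for the killed Green's function (\cite[Chap.~XII.2]{feller1971}), $h(\alpha, \beta) = \widehat{V}((\alpha, \beta])$ on $(-\infty, 0]$, where $\widehat{V}$ is the renewal measure of the weak descending ladder heights; a Fubini exchange then collapses the $w$-integral to $\int_b^a \widehat{V}((u, 0]) \, du$. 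The geometric termination of the weak descending ladder chain together with the Spitzer--Feller identity $\widehat{\mathbb{E}}_0[\mathcal{T}_1] = 1 / \widehat{\mathbb{P}}_0(S_n > 0, \, \forall n \geq 1)$ yields the key identity
\[
\widehat{V}((u, 0]) \;=\; \widehat{\mathbb{E}}_0[\mathcal{T}_1] \cdot \widehat{\mathbb{P}}_0(I_\infty > u),
\]
which, combined with Wald's identity, collapses the prefactor to $\widehat{\mathbb{E}}_0[S_1]^{-1}$ and produces the claimed limit $\frac{1}{\widehat{\mathbb{E}}_0[S_1]} \int_{(b, a]} \widehat{\mathbb{P}}_0(I_\infty \in [z, 0]) \, dz$ (the two survival functions $\widehat{\mathbb{P}}_0(I_\infty > u)$ and $\widehat{\mathbb{P}}_0(I_\infty \in [u, 0])$ agree almost everywhere in $u$).

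I expect the main technical hurdle to be the passage to the limit in the key renewal step, which requires direct Riemann integrability of the kernel (immediate from its boundedness and compact support) and the inheritance of the (non-)lattice property from $X$ to $\mathcal{H}_1$ (a standard ladder-height fact). The lattice statement (ii) follows the same template: replace the integral $\int_0^{-b} \cdots \, dw$ by the appropriate Riemann sum over multiples of the span $h$, and invoke Lemma~\ref{Lem-renewal-theorem}(ii) in place of Blackwell's theorem; the duality and geometric identities carry over verbatim.
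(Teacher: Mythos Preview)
Your approach is correct and, at its core, coincides with the paper's: both reduce the sum to a convolution against the strict ascending ladder height renewal measure and identify the surviving kernel with the distribution of $I_\infty$. The paper decomposes each term according to the position $\tau_n$ of the running maximum and then time-reverses the pre-$\tau_n$ segment; this produces exactly your $\int V_H(\mathrm{d}u)\,h(b+x-u,a+x-u)$, with the paper's $V$ equal to your $V_H$ (via reversal) and the paper's $\mathcal{U}$ equal to your weak descending ladder renewal measure $\widehat{V}$ (again via reversal). Where the two routes differ is in the bookkeeping: you invoke the key renewal theorem and the Feller/Spitzer identities $\widehat{\mathbb{E}}_0[\mathcal{T}_1]=1/\rho$ and $\widehat{V}([z,0])=\widehat{\mathbb{E}}_0[\mathcal{T}_1]\,\widehat{\mathbb{P}}_0(I_\infty\geq z)$ off the shelf, whereas the paper re-derives the Blackwell limit for $V$ from scratch (the long argument around \eqref{4ty7us2}) and proves the $\mathcal{U}\leftrightarrow I_\infty$ identity directly in \eqref{54trh62w}. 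Your route is shorter and more conceptual; the paper's is more self-contained.

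One small gap to close: boundedness and compact support alone do \emph{not} give direct Riemann integrability. You should note that $w\mapsto h(b+w,a+w)=G(a+w)-G(b+w)$ with $G(\alpha)=\widehat{\mathbb{E}}_0\bigl[\sum_{n<\mathcal{T}_1}\ind_{\{S_n\leq\alpha\}}\bigr]$ monotone, so the kernel is a difference of bounded monotone functions on a compact interval and hence DRI (this is exactly the content of the paper's Lemma~\ref{lem4}). Also, the inheritance of non-latticeness from $X$ to $\mathcal{H}_1$ is indeed standard but deserves a reference; it follows, for instance, from the Wiener--Hopf factorization in \cite[XII.3]{feller1971}.
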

	
	\begin{proof}
		(i)
		Since
		$$\lim_{x\to+\infty}\widehat{\mathbb{P}}_{-x}\Big(\overline{S}_n \leq 0,S_n\in (b, a] \Big)=0$$ for $n=0$,
		it suffices to prove that
		\begin{align}
			\lim_{x\to+\infty}\sum^{\infty}_{n=0}\widehat{\mathbb{P}}_{-x}\Big(\overline{S}_n \leq 0,
			S_n\in (b, a] \Big)
			&=\frac{1}{\widehat{\mathbb{E}}_0[S_1]}\int_{(b, a]}   \widehat{\mathbb{P}}_0 \left(I_\infty \in [z,0]\right) \mathrm{d} z.
		\end{align}
		For $n\geq 0$, define
		$$\tau_n=\min\{0\leq j\leq n:S_j=\overline{S}_n\}.$$
		Define $S_{k,j}:= S_{k+j}-S_j$ and $\overline{S}_{k,j}:= \max_{0\leq q\leq k} S_{q,j}$ for $k,j\geq0.$ Then $\{S_{k,j}: k\geq 0\}$ is a random walk independent of $\sigma(S_1,...,S_j)$. Therefore,
		\begin{align}\label{4rtw6hy}
			&\sum^{\infty}_{n=0}\widehat{\mathbb{P}}_{-x}( \overline{S}_n \leq 0, S_n\in (b,a] )
			=
			\sum^{\infty}_{n=0}\sum^n_{j=0}\widehat{\mathbb{P}}_{-x}(\tau_n=j,S_j\leq 0,S_n\in (b,a])\cr
			&=\sum^{\infty}_{n=0}\sum^n_{j=0}\widehat{\mathbb{P}}_{-x}(S_k<S_j,\forall k\in\{0,...,j-1\},S_j\leq 0,S_k\leq S_j,\forall k\in\{j,...,n\},  S_n\in (b,a])\cr
			&=\sum^{\infty}_{j=0}\sum^{\infty}_{n=j}\widehat{\mathbb{P}}_{-x}(\overline{S}_{j-1}<S_j,S_j\leq 0,\overline{S}_{n-j,j}\leq 0, S_{n-j,j}+S_j\in (b,a])\cr
			&=\sum^{\infty}_{j=0}\sum^{\infty}_{i=0}\widehat{\mathbb{P}}_{-x}(\overline{S}_{j-1}<S_j,S_j\leq 0, \overline{S}_{i,j} \leq 0,  S_{i,j}+S_j\in (b,a] ).
		\end{align}
		Put
		\begin{align}\label{46fvrfhvfr}
			\mathcal{U}(\mathrm{d}y)&:=\sum^{\infty}_{i=0}\widehat{\mathbb{P}}_0\big(\overline{S}_i\leq0,S_i\in \mathrm{d}y\big)~\text{on}~(-\infty, 0],\cr
			V(\mathrm{d}z)&:=\sum^{\infty}_{j=0}\widehat{\mathbb{P}}_0\big(\min_{1\leq i\leq j}S_i>0 ,{{S}}_j\in \mathrm{d}z\big)~\text{on}~[0,\infty),
		\end{align}
		where we define $\min_{1\leq i\leq0}S_i=+\infty$ by convention.
		Also, we define $V(z):= V([0,z])$ for $z\geq 0$.
		Let
		$$S^{(j)}_k:=S_j-S_{j-k}, \ 0\leq k\leq j$$
		be the backward random walk of $\{S_k\}_{0\leq k\leq j}$. Combining  \eqref{4rtw6hy} and  \eqref{46fvrfhvfr},
		when $x+b>0$, we have
		\begin{align}\label{4yhyuo9q1}
			&\sum^{\infty}_{n=0}\widehat{\mathbb{P}}_{-x}( \overline{S}_n \leq 0, S_n\in (b,a] )\cr
			&=\int_{(-\infty, 0]}\sum^{\infty}_{j=0}\widehat{\mathbb{P}}_0\Big(\overline{S}_{j-1}< S_j, S_j\leq x, S_j\in(x+b-y, x+a-y]\Big)\mathcal{U}(\mathrm{d}y)\cr
			&=\int_{(-\infty, 0]}\sum^{\infty}_{j=0}\widehat{\mathbb{P}}_0\Big(S^{(j)}_{j-k}>0,\forall k\in\{0,...,j-1\}, x\geq S^{(j)}_j,{S}^{(j)}_j\in(x+b-y,x+a-y]\Big)\mathcal{U}(\mathrm{d}y)\cr
			&=\int_{(-\infty, 0]}\sum^{\infty}_{j=0}\widehat{\mathbb{P}}_0\big(\min_{1\leq i\leq j}S_i>0, x\geq S_j,{S}_j\in(x+b-y,x+a-y]\big)\mathcal{U}(\mathrm{d}y)\cr
			&=\int_{(a,0]}V(x+a-y)-V(x+b-y)\mathcal{U}(\mathrm{d}y)+\int_{(b, a]}V(x)-V(x+b-y)\mathcal{U}(\mathrm{d}y),
		\end{align}
		where the last equality follows from the fact
		that $x+b-y>0$ for all $y\leq 0$ and
		for $y\leq b$,
		$\{x\geq S_j\}\cap \{{S}_j\in(x+b-y,x+a-y]\}=\emptyset.$
		\par
		Next, we are going to show that the measure $\mathcal{U}$ can be characterized through the distribution function of $I_{\infty}$. Since $\widehat{\mathbb{P}}_0(I_\infty \in (-\infty, 0])=1$, we have $\widehat{\mathbb{P}}_0$-almost surely,
		$$\tau:=\max\{n\geq0: S_n=I_\infty \}<\infty.$$
		Moreover, we have
		$$\rho:=\widehat{\mathbb{P}}_0\left(S_i>0,\forall i\geq 1\right)\in	(0,1].$$
		Indeed,  since $\widehat{\mathbb{P}}_0(I_\infty >-\infty)=1$, there exists $K>0$ such that
		\begin{align}\label{step29}
			\widehat{\mathbb{P}}_0(I_\infty>-K)>0.
		\end{align}
		Now from $\widehat{\mathbb{P}}(X>0)>0$, we see that there exists $\varepsilon>0$ such that $\widehat{\mathbb{P}}_0(S_1>\varepsilon)>0$.  Hence, by \eqref{step29}, set $n_0:=\lfloor K/\varepsilon \rfloor +1$,
		it holds that
		\begin{align}
			\rho&\geq \widehat{\mathbb{P}}_0\left(S_j- S_{j-1}> \varepsilon, \forall 1\leq j\leq n_0,\  S_{j+n_0}-S_{n_0}>-K,\ \forall j\geq 0 \right)\cr
			&\geq  (\widehat{\mathbb{P}}_0(S_1>\varepsilon))^{n_0}	\widehat{\mathbb{P}}_0(I_\infty>-K)>0.
		\end{align}
		Observe that for any $z<0$,
		\begin{align}\label{54trh62w}
			& \widehat{\mathbb{P}}_0\left(I_\infty \geq z\right)
			=\sum^{\infty}_{n=0}\widehat{\mathbb{P}}_0\left(S_n\geq z,\tau=n\right)=\sum^{\infty}_{n=0}\widehat{\mathbb{P}}_0\left(\underline{S}_{n-1}\geq S_n\geq z,  S_k> S_n,\forall k\geq n+1 \right)\cr
			&=\widehat{\mathbb{P}}_0\left(S_i>0,\forall i\geq 1\right)\sum^{\infty}_{n=0}\widehat{\mathbb{P}}_0\left(\underline{S}_{n-1}\geq S_n\geq z \right)=\rho \sum^{\infty}_{n=0}\widehat{\mathbb{P}}_0\left( \overline{S}_n^{(n)}\leq 0,   \overline{S}_n^{(n)}\geq z \right) \cr
			&=\rho \sum^{\infty}_{n=0}\widehat{\mathbb{P}}_0\left( \overline{S}_n\leq0,S_n\geq z\right)=\rho \mathcal{U}([z,0])
			,
		\end{align}
		where in the second inequality we define $S_{-1}:=+\infty$ for convenience.
		\par
		Let $q>0$ be a constant and $p(x), x\geq0$ be a positive function such that  $\lim_{x\to\infty}p(x)=+\infty.$
		We are going to prove that
		\begin{align}\label{4ty7us2}
			\lim_{x\to+\infty}
			\big( V(p(x)+q)-V(p(x)) \big)
			=\frac{\rho  q}{\widehat{\mathbb{E}}_0[S_1]}.
		\end{align}
		For simplicity, for $N\geq1$ and $x\geq0$, we define
		\begin{align}
			J_{1, N}(x)& := \bigg|\sum^{\infty}_{j=N+1}\widehat{\mathbb{P}}_0\left(\min_{1\leq i\leq j}{S}_i>0,{{S}}_j\in (p(x),p(x)+q]\right)\nonumber\\ &\qquad -\sum^{\infty}_{j=N+1}\widehat{\mathbb{P}}_0\left(\min_{1\leq i\leq N}S_i>0,{{S}}_j\in (p(x),p(x)+q]\right)\bigg|,\nonumber\\
			J_{2,N}(x)&:= \left|\sum^{\infty}_{j=N+1}\widehat{\mathbb{P}}_0\left(\min_{1\leq i\leq N}S_i>0,{{S}}_j\in (p(x),p(x)+q]\right)-\frac{q}{\widehat{\mathbb{E}}_0[S_1]}{\widehat{\mathbb{P}}_0\Big(\min_{1\leq i\leq N}S_i>0\Big)}\right|.
		\end{align}
		Through simple observations, it follows that for any $N\geq1$,
		\begin{align}\label{4grtgt6y6}
			&\left|V(p(x)+q)-V(p(x))-\frac{\rho  q}{\widehat{\mathbb{E}}_0[S_1]}\right|
			=\left|\sum^{\infty}_{j=0}\widehat{\mathbb{P}}_0\Big(\min_{1\leq i\leq j}{S}_i>0,{{S}}_j\in (p(x),p(x)+q]\Big)-\frac{\rho  q}{\widehat{\mathbb{E}}_0[S_1]}\right|\cr
			&\leq\sum^{N}_{j=0}\widehat{\mathbb{P}}_0\left({{S}}_j> p(x)\right)
			+J_{1,N}(x)+J_{2,N}(x)
			+\left|\frac{q}{\widehat{\mathbb{E}}_0[S_1]}{\widehat{\mathbb{P}}_0\Big(\min_{1\leq i\leq N}S_i>0\Big)}-\frac{\rho  q}{\widehat{\mathbb{E}}_0[S_1]}\right|.
		\end{align}
		We first deal with	$J_{1,N}(x)$.
		By the Markov property, we have
		\begin{align}
			J_{1,N}(x)
			&\leq \sum^{\infty}_{j=N+1}\widehat{\mathbb{P}}_0\left(\exists i\in{N+1,...,j}~\text{s.t.}~S_i\leq0,{{S}}_j\in (p(x),p(x)+q]\right)\cr
			&\leq\sum^{\infty}_{j=N+1}\sum^j_{i=N+1}\widehat{\mathbb{P}}_0\left(S_i\leq0,{{S}}_j\in (p(x),p(x)+q]\right)\cr
			&=\sum^{\infty}_{i=N+1}\sum^{\infty}_{j=i}\widehat{\mathbb{E}}_0\left[\ind_{\{S_i\leq0\}}
			\widehat{\mathbb{P}}_{z}\left(S_{j-i}\in(p(x),p(x)+q]\right)\Big|_{z=S_i}\right]\cr
			&=\sum^{\infty}_{i=N+1}\widehat{\mathbb{E}}_0\left[\ind_{\{S_i\leq0\}}
			\sum^{\infty}_{k=0}\widehat{\mathbb{P}}_{z}\left(S_{k}\in(p(x),p(x)+q]\right)\Big|_{z=S_i}\right]	.
		\end{align}
		From Lemma \ref{4rtgt5gt} (i), the above inequality is bounded from above by
		\begin{align}\label{step44}
			J_{1,N}(x)&\leq\sum^{\infty}_{i=N+1}\widehat{\mathbb{E}}_0\left[\ind_{\{S_i\leq0\}}\Big(U(p(x)+q-S_i)-U(p(x)-S_i)\Big)\right]\cr
			&\leq U(q)\sum^{\infty}_{i=N+1}\widehat{\mathbb{P}}_0\left(S_i\leq0\right).
		\end{align}
		We proceed to deal with $J_{2,N}(x)$. Noticing that  by the Markov property,
		\begin{align}
			&\lim_{x\to\infty}\sum^{\infty}_{j=N+1}\widehat{\mathbb{P}}_0\left(\min_{1\leq i\leq N}S_i>0,{{S}}_j\in (p(x),p(x)+q]\right)\cr
			&=\lim_{x\to\infty}\widehat{\mathbb{E}}_0\left[\ind_{\{\min_{1\leq i\leq N}S_i>0\}}\sum^{\infty}_{j=1}	\widehat{\mathbb{P}}_{z}\left(S_j\in (p(x),p(x)+q]\right)\Big|_{z=S_N}\right]\cr
			&=\widehat{\mathbb{E}}_0\left[\ind_{\{\min_{1\leq i\leq N}S_i>0\}}\lim_{x\to\infty}\sum^{\infty}_{i=1}
			\widehat{\mathbb{P}}_{z}\left(S_i\in(0,q]\right)|_{z=S_N-p(x)}\right]\cr
			&=\frac{q}{\widehat{\mathbb{E}}_0[S_1]}\widehat{\mathbb{P}}_0\Big(\min_{1\leq i\leq N}S_i>0\Big),
		\end{align}
		where the last equality follows from Lemma \ref{Lem-renewal-theorem} and the second equality follows from the dominated convergence theorem and the fact
		that for all $z\in \mathbb{R}$,
		\begin{align}
			\sum^{\infty}_{i=1}\widehat{\mathbb{P}}_{z}\left(S_i\in (0,q]\right)\leq U(q).
		\end{align}
		Therefore,
		\begin{align}\label{step45}
			\lim_{x\to+\infty} J_{2,N}(x)=0.
		\end{align}
		Now combining \eqref{step44} and \eqref{step45},
		taking $x\to+\infty$ in \eqref{4grtgt6y6} yields that for any $N\geq1$,
		\begin{align} \label{4tgt6hy6}
			&\limsup_{x\to+\infty}\left|V(p(x)+q)-V(p(x))-\frac{\rho q}{\widehat{\mathbb{E}}_0[S_1]}\right|\cr
			&\leq U(q)\sum^{\infty}_{i=N+1}
			\widehat{\mathbb{P}}_0\left(S_i\leq0\right)
			+\left|\frac{q}{\widehat{\mathbb{E}}_0[S_1]}{\widehat{\mathbb{P}}_0\Big(\min_{1\leq i\leq N}S_i>0\Big)}-\frac{\rho q}{\widehat{\mathbb{E}}_0[S_1]}\right|.
		\end{align}
		Letting $N\to\infty$ in \eqref{4tgt6hy6}, by \eqref {45gtzy6uy6}
		and the definition of $\rho$,
		we get \eqref{4ty7us2}.

		\par
		Finally, plugging  \eqref{54trh62w} and \eqref{4ty7us2} into \eqref{4yhyuo9q1} gives that
		\begin{align}\label{5hhyhy5h5}
			&\lim_{x\to+\infty}\sum^{\infty}_{n=0}\widehat{\mathbb{P}}_{-x}( \overline{S}_n \leq 0, S_n\in (b,a] ) \cr
			&=\lim_{x\to+\infty}\int_{(a,0]}V(x+a-y)-V(x+b-y)\mathcal{U}(\mathrm{d}y)
			+\lim_{x\to+\infty}
			\int_{(b,a]}V(x)-V(x+b-y)\mathcal{U}(\mathrm{d}y)\cr
			&=\int_{(a,0]}\frac{\rho  (a-b)}{\widehat{\mathbb{E}}_0[S_1]}\mathcal{U}(\mathrm{d}y)+\int_{(b,a]}\frac{\rho  (y-b)}{\widehat{\mathbb{E}}_0[S_1]}\mathcal{U}(\mathrm{d}y)\cr
			&=\frac{(a-b)}{\widehat{\mathbb{E}}_0[S_1]}\widehat{\mathbb{P}}_0\Big(I_\infty \in (a,0]\Big)+\frac{1  }{\widehat{\mathbb{E}}_0[S_1]}\int_{(b,a]}(y-b)\widehat{\mathbb{P}}_0\Big(I_\infty \in \mathrm{d}y\Big),
		\end{align}
		where in the second equality, we used the dominated convergence theorem and Lemma \ref{4rtgt5gt} (i). Finally, according to Fubini's theorem, for any finite measure $\mu$ on $\mathbb{R}$, we have
		\begin{align}
			(a-b) \mu((a,0]) + \int_{(b,a]}(y-b) \mu(\mathrm{d}y) &= 	(a-b) \mu((a,0])+ \int_{(b,a]}\int_{(b, y]}1 \mathrm{d}z  \mu(\mathrm{d}y)\nonumber\\
			& = \int_{(b,a]}  \mu((a,0]) \mathrm{d}z+ \int_{(b,a]}   \mu([z, a]) \mathrm{d}z  =  \int_{(b,a]}   \mu([0, z]) \mathrm{d}z.
		\end{align}
		This, combined with \eqref{5hhyhy5h5}, concludes the
		proof of (i).
		
		(ii) The proof of (ii) is quite similar with $x$ replaced by $kh$ and $(b,a]$ replaced by $ih$ in the proof of (i). Also, we use
		Lemma \ref{Lem-renewal-theorem} (ii) in the lattice case.
	\end{proof}

	\par
	The following lemma gives two kinds of renewal theorems of the random walk $\{S_n\}_{n\geq0}$, which is cruical in proving Theorem \ref {456hy5t1}. Let $f$ be a non-negative Borel function.  For any $\kappa >0$,  define for $x\in\mathbb{R}$,
	\begin{align}
		&\overline{f}_\kappa (x):= \sum_{i \in \mathbb{Z}} 1_{(i\kappa, (i+1)\kappa]}(x) \sup_{z\in (i\kappa, (i+1)\kappa]} f(z),\nonumber\\
		&\underline{f}_\kappa (x):= \sum_{i \in \mathbb{Z}} 1_{(i\kappa, (i+1)\kappa]} (x)\inf_{z\in (i\kappa, (i+1)\kappa]} f(z).
	\end{align}
	We say that $f$ is directly Riemann integrable if $\int_{\mathbb R} \overline{f}_\kappa (x) \mathrm{d} x<\infty$ for
	some $\kappa>0$ and
	\begin{align}\label{DRI}
		\lim_{\kappa\to 0}\int_{\mathbb R}\overline{f}_\kappa (x)-\underline{f}_\kappa (x)\mathrm{d} x=0.
	\end{align}
	Denote by ${\rm DRI}_+(\mathbb{R})$ the family of  all non-negative directly Riemann integrable functions.

	\begin{lemma}\label{lem1}
		Assume
		$\mathbb{E}[Xe^{\gamma X}]<\infty$.
		
		\begin{itemize}
			\item[(i)]  If $X$ is non-lattice, then for any $\theta>0$,
			\begin{align}\label{eq1}
				\lim_{x\to+\infty} \sum_{\ell=1}^\infty  \widehat{\mathbb{E}}_{-x}\left(e^{-\theta S_\ell } \ind_{\{ \overline{S}_{\ell-1} \leq 0<S_\ell  \}}\right)
				=\lim_{x\to+\infty}  \widehat{\mathbb{E}}_{-x}\left(e^{-\theta S_{T_0^+} } \right)=
				\frac{1-\widehat{\mathbb{E}}_0[e^{-\theta \mathcal{H}_1 }]}{\theta \widehat{\mathbb{E}}_0[\mathcal{H}_1]}
			\end{align}
			and  for any  $f\in  {\rm DRI}_+(\mathbb{R})$ with compact support,
			\begin{align}\label{eq2}
				& \lim_{x\to+\infty} \sum_{\ell=1}^\infty  \widehat{\mathbb{E}}_{-x}\left(f(S_\ell)\ind_{\{\overline{S}_\ell \leq 0\}} \right)=\frac{1}{\widehat{\mathbb{E}}_0[S_1]}
				\widehat{\mathbb{E}}_0 \left[ \int_{-\infty}^{I_\infty}  f(z)\mathrm{d}z\right].
			\end{align}
			\item[(ii)]  If $X$ is lattice with span $h>0$, then for any $\theta>0$,
			\begin{align}\label{eq1'}
				\lim_{k\to+\infty} \sum_{\ell=1}^\infty  \widehat{\mathbb{E}}_{-kh}\left(e^{-\theta S_\ell } \ind_{\{ \overline{S}_{\ell-1} \leq 0<S_\ell  \}}\right)
				= 	\lim_{k\to+\infty}  \widehat{\mathbb{E}}_{-kh}\left(e^{-\theta S_{T_0^+} } \right)=
				\frac{h(1-\widehat{\mathbb{E}}_0[e^{-\theta \mathcal{H}_1 }])}{\widehat{\mathbb{E}}_0[\mathcal{H}_1](e^{\theta h}-1)}\quad
			\end{align}
			and for any non-negative function $f$ with compact support,
			\begin{align}\label{eq2'}
				& \lim_{k\to+\infty} \sum_{\ell=1}^\infty  \widehat{\mathbb{E}}_{-kh}\left(f(S_\ell)\ind_{\{\overline{S}_\ell\leq 0\}} \right)= \frac{h}{\widehat{\mathbb{E}}_0[S_1]}
				\widehat{\mathbb{E}}_0\left[\sum_{i=-\infty}^{I_\infty /h} f(ih)\right].
			\end{align}
		\end{itemize}
	\end{lemma}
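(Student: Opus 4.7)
I treat the non-lattice case in full detail; the lattice case requires only obvious substitutions of Lemma~\ref{lem2}(ii) and Lemma~\ref{Lem-renewal-theorem}(ii).

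For \eqref{eq1}, the first equality is an unfolding of definitions: under $\widehat{\mathbb{P}}_{-x}$ we have $S_0=-x<0$, so $\{\overline{S}_{\ell-1}\leq 0<S_\ell\}=\{T_0^+=\ell\}$, and summing over $\ell\geq 1$ produces $\widehat{\mathbb{E}}_{-x}[e^{-\theta S_{T_0^+}}]$ (here $T_0^+<\infty$ $\widehat{\mathbb{P}}_{-x}$-a.s.\ by \eqref{step1} and the strong law of large numbers). For the second equality, I interpret $S_{T_0^+}$ under $\widehat{\mathbb{P}}_{-x}$ as the overshoot at level $x$ of a walk starting from $0$: writing the strict ascending ladder heights as $\{\mathcal{H}_k\}_{k\geq 1}$ (i.i.d.\ copies of $\mathcal{H}_1$ under $\widehat{\mathbb{P}}_0$) and $\nu_x:=\inf\{k\geq 1:\mathcal{H}_1+\cdots+\mathcal{H}_k>x\}$, one has $x+S_{T_0^+}=\mathcal{H}_1+\cdots+\mathcal{H}_{\nu_x}$. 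Since $\mathbb{E}[Xe^{\gamma X}]<\infty$ guarantees $\widehat{\mathbb{E}}_0[\mathcal{H}_1]<\infty$ (Feller's Theorem~2(iii), cited below \eqref{Def-of-I}), the classical key renewal theorem for the non-lattice renewal process with inter-arrival law $\mathcal{H}_1$ implies that the overshoot converges in distribution to the equilibrium law with density $\widehat{\mathbb{P}}_0(\mathcal{H}_1>y)/\widehat{\mathbb{E}}_0[\mathcal{H}_1]$ on $(0,\infty)$. Integrating $e^{-\theta y}$ against this density and applying Fubini gives
\begin{align}
\lim_{x\to\infty}\widehat{\mathbb{E}}_{-x}[e^{-\theta S_{T_0^+}}]=\frac{1}{\widehat{\mathbb{E}}_0[\mathcal{H}_1]}\int_0^\infty e^{-\theta y}\widehat{\mathbb{P}}_0(\mathcal{H}_1>y)\,\mathrm{d}y=\frac{1-\widehat{\mathbb{E}}_0[e^{-\theta\mathcal{H}_1}]}{\theta\widehat{\mathbb{E}}_0[\mathcal{H}_1]}.
\end{align}

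For \eqref{eq2}, the main input is Lemma~\ref{lem2}(i). Since $I_\infty\leq 0$ $\widehat{\mathbb{P}}_0$-a.s., Fubini yields $\int_{(b,a]}\widehat{\mathbb{P}}_0(I_\infty\in[z,0])\,\mathrm{d}z=\widehat{\mathbb{E}}_0\bigl[\int_{-\infty}^{I_\infty}\ind_{(b,a]}(z)\,\mathrm{d}z\bigr]$ for $b<a\leq 0$, so Lemma~\ref{lem2}(i) is exactly \eqref{eq2} for $f=\ind_{(b,a]}$; by linearity the identity extends to every finite step function supported in $(-\infty,0]$. Because $\ind_{\{\overline{S}_\ell\leq 0\}}$ forces $S_\ell\leq 0$, any part of $f$ supported in $(0,\infty)$ contributes $0$ to both sides of \eqref{eq2}, so I may assume $\mathrm{supp}\,f\subset(-\infty,0]$. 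For a general compactly supported $f\in{\rm DRI}_+(\mathbb{R})$ and each $\kappa>0$, I sandwich $\underline{f}_\kappa\leq f\leq \overline{f}_\kappa$; since $\overline{f}_\kappa$ and $\underline{f}_\kappa$ are finite step functions on a fixed compact window, the step-function case already proved yields
\begin{align}
&\frac{1}{\widehat{\mathbb{E}}_0[S_1]}\int\underline{f}_\kappa(z)\widehat{\mathbb{P}}_0(I_\infty\geq z)\,\mathrm{d}z\leq\liminf_{x\to\infty}\sum_{\ell=1}^\infty\widehat{\mathbb{E}}_{-x}\bigl[f(S_\ell)\ind_{\{\overline{S}_\ell\leq 0\}}\bigr]\\
&\qquad\leq\limsup_{x\to\infty}\sum_{\ell=1}^\infty\widehat{\mathbb{E}}_{-x}\bigl[f(S_\ell)\ind_{\{\overline{S}_\ell\leq 0\}}\bigr]\leq\frac{1}{\widehat{\mathbb{E}}_0[S_1]}\int\overline{f}_\kappa(z)\widehat{\mathbb{P}}_0(I_\infty\geq z)\,\mathrm{d}z.
\end{align}
Using $\widehat{\mathbb{P}}_0(I_\infty\geq z)\leq 1$ and $\int(\overline{f}_\kappa-\underline{f}_\kappa)(z)\,\mathrm{d}z\to 0$ as $\kappa\downarrow 0$ by \eqref{DRI}, the two extremes collapse to $\frac{1}{\widehat{\mathbb{E}}_0[S_1]}\int f(z)\widehat{\mathbb{P}}_0(I_\infty\geq z)\,\mathrm{d}z$, which equals the right-hand side of \eqref{eq2} after a final application of Fubini.

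The lattice case is structurally identical: \eqref{eq1'} follows from the same ladder-height representation combined with the arithmetic key renewal theorem, while for \eqref{eq2'} no direct-Riemann-integrability argument is needed, since a compactly supported function on the lattice is already a finite linear combination of atom indicators and one merely sums the pointwise conclusion of Lemma~\ref{lem2}(ii) and recognizes the result via Fubini. The only genuine subtlety throughout is the interchange of $\lim_{x\to\infty}$ with the infinite sum over $\ell$: in \eqref{eq1}/\eqref{eq1'} it is automatic because $\{T_0^+=\ell\}_{\ell\geq 1}$ partitions the sample space, so the sum is a single expectation controlled by the overshoot renewal theorem, and in \eqref{eq2}/\eqref{eq2'} the compact support of $f$ reduces everything to finitely many of the bounded quantities already handled by Lemma~\ref{lem2}.
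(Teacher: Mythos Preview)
Your proof is correct. For \eqref{eq2} and \eqref{eq2'} you follow essentially the same sandwich argument via Lemma~\ref{lem2} as the paper: the paper writes the lower and upper Riemann sums directly, while you phrase it as first establishing the step-function case and then squeezing, but the content is identical.

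For \eqref{eq1} you take a genuinely shorter route. You invoke the classical residual-life (overshoot) convergence theorem for a non-lattice renewal process with finite mean as a black box, then simply integrate $e^{-\theta y}$ against the limiting equilibrium density. The paper instead derives this from scratch: it writes
\[
\widehat{\mathbb{E}}_{-x}\bigl[e^{-\theta S_{T_0^+}}\bigr]=\int_{[0,x]} e^{\theta(x-y)}\,\widehat{\mathbb{E}}_0\bigl[e^{-\theta\mathcal{H}_1}\ind_{\{\mathcal{H}_1>x-y\}}\bigr]\,U^+(\mathrm{d}y),
\]
splits the integral into $[0,x-N]$ and $(x-N,x]$, bounds the first piece using $\widehat{\mathbb{E}}_0[U^+(\mathcal{H}_1)\ind_{\{\mathcal{H}_1\geq N\}}]$, and handles the second by discretizing and applying the elementary renewal theorem (Lemma~\ref{Lem-renewal-theorem}) to each increment of $U^+$. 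Your approach is cleaner and avoids this bookkeeping, at the cost of citing a result one step beyond what the paper states; the paper's approach is fully self-contained given only Lemma~\ref{Lem-renewal-theorem}, which is consistent with its aim of working under the minimal hypothesis $\widehat{\mathbb{E}}_0[\mathcal{H}_1]<\infty$.
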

	\begin{proof}
		We first prove \eqref{eq1} and \eqref{eq1'}.
		We only prove \eqref{eq1} here since the proof of \eqref{eq1'} is similar.
		Recall the definition of $T_x^+$ in \eqref{Def-of-T+}. Then
		\begin{align}\label{Eq1}
			\sum_{\ell=1}^\infty  \widehat{\mathbb{E}}_{-x}\left(e^{-\theta S_\ell } \ind_{\{ \overline{S}_{\ell-1} \leq 0<S_\ell  \}}\right) =  \sum_{\ell=1}^\infty  \widehat{\mathbb{E}}_{-x}\left(e^{-\theta S_{T_0^+} } \ind_{\{ T_0^+=\ell   \}}\right)= \widehat{\mathbb{E}}_{-x}\left(e^{-\theta S_{T_0^+} } \right)
			,
		\end{align}
		this implies the first equality.
		Set $\mathcal{T}_0:=0$. Denote by $\mathcal{T}_n:= \inf\left\{k>\mathcal{T}_{n-1}: S_k> S_{\mathcal{T}_{n-1}}\right\},~n\geq1$ the $n$-th ladder epoch of $\{S_k\}_{k\geq0}$.  Let $\mathcal{H}_n:= S_{\mathcal{T}_n},~n\geq0$ be the ladder hight process and $U^+(x):= \sum_{n=0}^\infty \widehat{\mathbb{P}}_0(\mathcal{H}_n\leq x)$ be the corresponding renewal function. Noticing that $\mathcal{H}_{n+1}-\mathcal{H}_{n},~n\geq0$ are i.i.d.	equal in law to
		$\mathcal{H}_1$	(see \cite[p.391-392]{feller1971}).
		By the strong Markov property of $\{S_k\}_{k\geq0}$, we have
		\begin{align}\label{step33}
			\widehat{\mathbb{E}}_{-x}\left(e^{-\theta S_{T_0^+} } \right)
			& = \sum_{n=0}^\infty  \widehat{\mathbb{E}}_{0}\left(e^{-\theta (S_{T_x^+}-x) } \ind_{\{ \mathcal{H}_n\leq x < \mathcal{H}_{n+1} \}}\right) =\sum_{n=0}^\infty  \widehat{\mathbb{E}}_{0}\left(e^{-\theta (\mathcal{H}_{n+1}-x) } \ind_{\{ \mathcal{H}_n\leq x < \mathcal{H}_{n+1} \}}\right)\cr
			&=
			\sum_{n=0}^\infty \widehat{\mathbb{E}}_{0} \left(\ind_{\{ \mathcal{H}_n\leq x\}} e^{-\theta(\mathcal{H}_n-x)}\widehat{\mathbb{E}}_{0}\big(e^{-\theta \mathcal{H}_{1} } \ind_{\{ x -y< \mathcal{H}_{1}\}}\big)\big|_{y=\mathcal{H}_n}\right)\cr
			& = \int_{[0,x]} e^{\theta (x-y)} \widehat{\mathbb{E}}_0\left(e^{-\theta \mathcal{H}_1}\ind_{\{\mathcal{H}_1> x-y\}}\right)U^+(\mathrm{d}y),
		\end{align}
		where the second equality follows from the fact on the event ${\{ \mathcal{H}_n\leq x < \mathcal{H}_{n+1} \}}$, we have $\mathcal{T}_n=T_x^+$.
		For each $N>2$ and $\delta>0$, firstly noticing that
		\begin{align}
			&\sup_{x>N} \int_{[0, x-N]} e^{\theta (x-y)} \widehat{\mathbb{E}}_0\left(e^{-\theta \mathcal{H}_1}\ind_{\{\mathcal{H}_1> x-y\}}\right)U^+(\mathrm{d}y)\nonumber\\
			&\leq \sup_{x>N} \int_{[0, x-N]}  \widehat{\mathbb{P}}_0\left(\mathcal{H}_1> x-y\right)U^+(\mathrm{d}y) = \sup_{x>N} \int_N ^\infty U^+(x-N)-U^+(x-z)  \widehat{\mathbb{P}}_0\left(\mathcal{H}_1\in \mathrm{d}z\right) \nonumber\\
			&\leq \int_{[N,\infty]} U^+(z-N) \widehat{\mathbb{P}}_0\left(\mathcal{H}_1\in \mathrm{d}z\right)  \leq  \widehat{\mathbb{E}}_0 (U^+ (\mathcal{H}_1) \ind_{\{\mathcal{H}_1\geq N\}}).
		\end{align}
		Since $\widehat{\mathbb{E}}_0[\mathcal{H}_1]<\infty$ (see \cite[p.397, Theorem 2 (ii)]{feller1971}), by Lemma \ref{4rtgt5gt} (ii) and the dominated convergence theorem, above yields that
		\begin{align}\label{step11}
			\lim_{N\to\infty}\sup_{x>N} \int_{[0, x-N]} e^{\theta (x-y)} \widehat{\mathbb{E}}_0\left(e^{-\theta \mathcal{H}_1}\ind_{\{\mathcal{H}_1> x-y\}}\right)U^+(\mathrm{d}y)=0.
		\end{align}
		\par
		Next, we address $y\in (x-N, x]$. By Lemma \ref{Lem-renewal-theorem}, it follows that
		\begin{align}
			&\limsup_{x\to\infty}   \int_{(x-N,x]} e^{\theta (x-y)} \widehat{\mathbb{E}}_0\left(e^{-\theta \mathcal{H}_1}\ind_{\{\mathcal{H}_1> x-y\}}\right)U^+(\mathrm{d}y)\nonumber\\
			& \leq \limsup_{x\to\infty}  \sum_{j=1}^{\lfloor N/\delta\rfloor +1} e^{\theta j\delta} \left(U^+(x-(j-1)\delta)-U^+(x-j\delta)\right) \widehat{\mathbb{E}}_0\left(e^{-\theta \mathcal{H}_1}\ind_{\{\mathcal{H}_1> (j-1)\delta\}}\right)\nonumber\\
			& = \frac{1}{\widehat{\mathbb{E}}_0[\mathcal{H}_1]} \sum_{j=1}^{\lfloor N/\delta \rfloor+1} e^{\theta j\delta} \delta  \widehat{\mathbb{E}}_0\left(e^{-\theta \mathcal{H}_1}\ind_{\{\mathcal{H}_1>  (j-1)\delta\}}\right)\cr
			& \stackrel{\delta\downarrow 0+}{\longrightarrow}  \frac{1}{\widehat{\mathbb{E}}_0[\mathcal{H}_1]}  \int_0^N e^{\theta z}   \widehat{\mathbb{E}}_0\left(e^{-\theta \mathcal{H}_1}\ind_{\{\mathcal{H}_1> z\}}\right) \mathrm{d}z.
		\end{align}
		Also, for the lower bound, similarly,
		\begin{align}
			&\liminf_{x\to\infty}   \int_{(x-N,x]} e^{\theta (x-y)} \widehat{\mathbb{E}}_0\left(e^{-\theta \mathcal{H}_1}\ind_{\{H_1> x-y\}}\right)U^+(\mathrm{d}y)\nonumber\\
			& \geq \liminf_{x\to\infty}  \sum_{j=1}^{\lfloor N/\delta\rfloor } e^{\theta (j-1)\delta} \left(U^+(x-(j-1)\delta)-U^+(x-j\delta)\right) \widehat{\mathbb{E}}_0\left(e^{-\theta \mathcal{H}_1}\ind_{\{\mathcal{H}_1\geq  j \delta\}}\right)\nonumber\\
			& = \frac{1}{\widehat{\mathbb{E}}_0[\mathcal{H}_1]} \sum_{j=1}^{\lfloor N/\delta \rfloor} e^{\theta (j-1)\delta} \delta  \widehat{\mathbb{E}}_0\left(e^{-\theta \mathcal{H}_1}\ind_{\{\mathcal{H}_1\geq  j\delta\}}\right)\cr
			&\stackrel{\delta\downarrow 0+}{\longrightarrow}  \frac{1}{\widehat{\mathbb{E}}_0[\mathcal{H}_1]}  \int_0^N e^{\theta z}   \widehat{\mathbb{E}}_0\left(e^{-\theta \mathcal{H}_1}\ind_{\{\mathcal{H}_1\geq  z\}}\right) \mathrm{d}z.
		\end{align}
		Since $\int_0^N e^{\theta z}   \widehat{\mathbb{E}}_0\left(e^{-\theta \mathcal{H}_1}\ind_{\{H_1=  z\}}\right) \mathrm{d}z=0$, we conclude that
		\begin{align}\label{step12}
			&\lim_{x\to\infty}   \int_{(x-N,x]} e^{\theta (x-y)} \widehat{\mathbb{E}}_0\left(e^{-\theta \mathcal{H}_1}\ind_{\{\mathcal{H}_1> x-y\}}\right)U^+(\mathrm{d}y)\nonumber\\
			& =  \frac{1}{\widehat{\mathbb{E}}_0[\mathcal{H}_1]}  \int_0^N e^{\theta z}   \widehat{\mathbb{E}}_0\left(e^{-\theta \mathcal{H}_1}\ind_{\{\mathcal{H}_1> z\}}\right) \mathrm{d}z.
		\end{align}
		Therefore, by Fubini's theorem, we have
		\begin{align}
			&\lim_{x\to+\infty} \sum_{\ell=1}^\infty  \widehat{\mathbb{E}}_{-x}\left(e^{-\theta S_\ell } \ind_{\{ \max_{j\leq \ell-1} S_j \leq 0<S_\ell  \}}\right)\stackrel{\eqref{step33}}{=}\lim_{x\to+\infty}  \int_{[0,x]} e^{\theta (x-y)} \widehat{\mathbb{E}}_0\left(e^{-\theta \mathcal{H}_1}\ind_{\{\mathcal{H}_1> x-y\}}\right)U^+(\mathrm{d}y)\nonumber\\
			&\stackrel{\eqref{step11}}{=} \lim_{N\to\infty} \lim_{x\to\infty} \int_{(x-N,x]} e^{\theta (x-y)} \widehat{\mathbb{E}}_0\left(e^{-\theta \mathcal{H}_1}\ind_{\{\mathcal{H}_1> x-y\}}\right)U^+(\mathrm{d}y) \nonumber\\
			& \stackrel{\eqref{step12}}{=}\frac{1}{\widehat{\mathbb{E}}_0[\mathcal{H}_1]}  \int_0^\infty  e^{\theta z}   \widehat{\mathbb{E}}_0\left(e^{-\theta \mathcal{H}_1}\ind_{\{\mathcal{H}_1> z\}}\right) \mathrm{d}z= \frac{1}{\widehat{\mathbb{E}}_0[\mathcal{H}_1]} \widehat{\mathbb{E}}_0\left[e^{-\theta \mathcal{H}_1}\int^{\mathcal{H}_1}_{0}e^{\theta z}\mathrm{d}z\right]\cr
			&=\frac{1-\widehat{\mathbb{E}}_0[e^{-\theta \mathcal{H}_1 }]}{\theta \widehat{\mathbb{E}}_0[\mathcal{H}_1]},
		\end{align}
		which completes the proof of \eqref{eq1}.
		\par

		Now we
		prove \eqref{eq2} and \eqref{eq2'}. Since \eqref{eq2'} follows immediately from Lemma \ref{lem2}, it suffices to prove \eqref{eq2}. Suppose that the support of $f$ is a subset of $[-N, N]$.
		For each $\delta>0$, set $N_\delta:= \lfloor N/\delta \rfloor +1$, then
		we have the following lower bound
		\begin{align}
			\sum_{\ell=1}^\infty  \widehat{\mathbb{E}}_{-x}\left( f(S_\ell)\ind_{\{\overline{S}_\ell \leq 0\}} \right)  &\geq \sum^{\infty}_{\ell =1}\sum^{0}_{j=-N_\delta+1}\inf_{y\in((j-1)\delta,j\delta]}f(y)\widehat{\mathbb{P}}_{-x}(\overline{S}_\ell \leq 0, S_\ell \in((j-1)\delta,j\delta])\cr
			&= \sum^{0}_{j=-N_\delta+1}\inf_{y\in((j-1)\delta,j\delta]}f (y)\sum^{\infty}_{\ell=1}\widehat{\mathbb{P}}_{-x}(\overline{S}_\ell \leq 0,  S_\ell\in((j-1)\delta,j\delta]).
		\end{align}
		Taking $x\to+\infty$ in the above inequality, by Lemma \ref{lem2},
		\begin{align}\label{step26}
			&\liminf_{x\to+\infty}  \sum_{\ell=1}^\infty  \widehat{\mathbb{E}}_{-x}\left(f(S_\ell)\ind_{\{\overline{S}_\ell \leq 0\}} \right) \nonumber\\
			&\geq  \frac{1}{\widehat{\mathbb{E}}_0[S_1]}\sum^{0}_{j=-N_\delta+1}\inf_{y\in((j-1)\delta,j\delta]}f(y) \int_{((j-1)\delta, j\delta]}\widehat{\mathbb{P}}_0(I_\infty\in [z,0])\mathrm{d}z.
		\end{align}
		Noticing that for each $\delta>0$,
		\begin{align}\label{step27}
			&	\left| \sum^{0}_{j=-N_\delta+1}\inf_{y\in((j-1)\delta,j\delta]}f (y) \int_{((j-1)\delta, j\delta]}\widehat{\mathbb{P}}_0(I_\infty\in [z,0])\mathrm{d}z - \int_{-\infty}^0 f(z)\widehat{\mathbb{P}}_0(I_\infty\in [z,0])\mathrm{d} z \right| \nonumber\\
			& =  \sum^{0}_{j=-N_\delta+1} \int_{((j-1)\delta, j\delta]} \left(f(z) - \inf_{y\in((j-1)\delta,j\delta]}f (y) \right)\widehat{\mathbb{P}}_0(I_\infty\in [z,0])\mathrm{d}z  \nonumber\\
			&\leq \sum^{0}_{j=-N_\delta+1} \int_{((j-1)\delta, j\delta]} \left(\sup_{y\in((j-1)\delta,j\delta]} f(y) - \inf_{y\in((j-1)\delta,j\delta]}f(y) \right)\mathrm{d}z \nonumber\\
			& = \int_{\mathbb{R}} \left( \overline{f}_\delta(z)- \underline{f}_\delta(z) \right)\mathrm{d} z.
		\end{align}
		Since $f\in {\rm DRI}_+(\mathbb{R})$, combining \eqref{step26} and \eqref{step27}, taking $\delta \to 0$, we conclude that
		\begin{align}\label{step30}
			&\liminf_{x\to+\infty}  \sum_{\ell=1}^\infty  \widehat{\mathbb{E}}_{-x}\left(f(S_\ell)\ind_{\{\overline{S}_\ell \leq 0\}} \right) \geq \frac{1}{\widehat{\mathbb{E}}_0[S_1]}\int_{-\infty}^0 f(z)\widehat{\mathbb{P}}_0(I_\infty\in [z,0])\mathrm{d} z.
		\end{align}
		For the upper bound, repeating the same argument yields that
		\begin{align}\label{step31}
			&\limsup_{x\to+\infty}  \sum_{\ell=1}^\infty  \widehat{\mathbb{E}}_{-x}\left(f(S_\ell)\ind_{\{\overline{S}_\ell \leq 0\}} \right) \leq \frac{1}{\widehat{\mathbb{E}}_0[S_1]}\int_{-\infty}^0 f(z)\widehat{\mathbb{P}}_0(I_\infty\in [z,0])\mathrm{d} z.
		\end{align}
		Combining \eqref{step30}, \eqref{step31} and Fubini's theorem, we get \eqref{eq2}.
	\end{proof}
	
	According to Lemma \ref{lem1}, we have the following corollary.
	\begin{corollary}\label{Cor:tightness-overshoot}
		Assume $\mathbb{E}[Xe^{\gamma X}]<\infty$. Then $\{(S_{T_0^+}, \widehat{\mathbb{P}}_{-x}), x> 0\}$ is tight. That is,
		\begin{align}
			\lim_{K\to+\infty} \sup_{x>0} \widehat{\mathbb{P}}_{-x}\left(S_{T_0^+}>K\right)=0.
		\end{align}
	\end{corollary}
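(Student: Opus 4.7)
The plan is to combine Lemma \ref{lem1}, which pins down the Laplace transform of $S_{T_0^+}$ as $x\to\infty$, with a crude single-jump union bound that handles the small-$x$ regime. The two-part split is forced upon us because Lemma \ref{lem1} is only an asymptotic statement in $x$ and gives no uniform control of $\widehat{\mathbb{P}}_{-x}$ when $x$ stays bounded.

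First I would show that the limit $L(\theta)$ supplied by Lemma \ref{lem1}---namely $L(\theta)=\frac{1-\widehat{\mathbb{E}}_0[e^{-\theta\mathcal{H}_1}]}{\theta\widehat{\mathbb{E}}_0[\mathcal{H}_1]}$ in the non-lattice case and $L(\theta)=\frac{h(1-\widehat{\mathbb{E}}_0[e^{-\theta\mathcal{H}_1}])}{\widehat{\mathbb{E}}_0[\mathcal{H}_1](e^{\theta h}-1)}$ in the lattice case---satisfies $L(\theta)\to 1$ as $\theta\to 0+$. This is a one-line dominated convergence argument using $\widehat{\mathbb{E}}_0[\mathcal{H}_1]<\infty$ and $(1-e^{-\theta \mathcal{H}_1})/\theta \to \mathcal{H}_1$ dominated by $\mathcal{H}_1\in L^1(\widehat{\mathbb{P}}_0)$. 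Combined with the elementary inequality $1-e^{-\theta y}\geq (1-e^{-\theta K})\ind_{\{y>K\}}$, which after taking expectation yields the Chernoff-type bound
\[
\widehat{\mathbb{P}}_{-x}(S_{T_0^+}>K)\leq \frac{1-\widehat{\mathbb{E}}_{-x}[e^{-\theta S_{T_0^+}}]}{1-e^{-\theta K}},
\]
this handles large $x$: given $\varepsilon>0$, pick $\theta$ with $1-L(\theta)<\varepsilon/4$, invoke Lemma \ref{lem1} to find $X_0$ such that $1-\widehat{\mathbb{E}}_{-x}[e^{-\theta S_{T_0^+}}]<\varepsilon/2$ for $x>X_0$, and take $K_1\geq \theta^{-1}\log 2$. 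Then for every $K\geq K_1$ one obtains $\sup_{x>X_0}\widehat{\mathbb{P}}_{-x}(S_{T_0^+}>K)<\varepsilon$.

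For $x\in (0, X_0]$ I would use the observation that on $\{S_{T_0^+}>K\}$ we have $S_{T_0^+-1}\leq 0<K<S_{T_0^+}$, so the jump at $T_0^+$ satisfies $X_{T_0^+}>K$. A union bound over the time of this jump, together with the independence of $X_n$ from $(S_0,\ldots,S_{n-1})$, gives
\[
\widehat{\mathbb{P}}_{-x}(S_{T_0^+}>K)\leq \sum_{n\geq 1}\widehat{\mathbb{P}}_{-x}(S_{n-1}\leq 0)\,\widehat{\mathbb{P}}(X>K)=U(x)\,\widehat{\mathbb{P}}(X>K)\leq (X_0+1)U(1)\,\widehat{\mathbb{P}}(X>K),
\]
using Lemma \ref{4rtgt5gt}(ii). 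Since $\widehat{\mathbb{P}}(X>K)=m\,\mathbb{E}[e^{\gamma X}\ind_{\{X>K\}}]\to 0$ as $K\to\infty$ by dominated convergence (justified by $\mathbb{E}[e^{\gamma X}]=1/m<\infty$), a suitable $K_2$ makes this bound smaller than $\varepsilon$ as well; taking $K\geq\max(K_1,K_2)$ then finishes the proof.

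The main obstacle is precisely the small-$x$ regime, since Lemma \ref{lem1} is silent there; attempting to control this via the ladder/renewal structure as in the proof of Lemma \ref{lem1} seems to demand extra integrability of $\mathcal{H}_1$. The single-jump observation circumvents this cleanly by replacing $S_{T_0^+}$ with the much easier quantity $X_{T_0^+}$, and the required decay of $\widehat{\mathbb{P}}(X>K)$ comes for free from the standing hypothesis $\mathbb{E}[e^{\gamma X}]=1/m<\infty$.
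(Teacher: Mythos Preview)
Your proposal is correct and follows essentially the same two-regime split as the paper: Lemma~\ref{lem1} (via a Laplace/Chernoff bound) controls the overshoot for $x$ large, while for bounded $x$ both arguments rest on the single-jump observation $S_{T_0^+}\le X_{T_0^+}$ together with the finiteness of $U(x)$. The only cosmetic difference is in the bounded-$x$ step: the paper uses that observation to bound the first moment $\widehat{\mathbb{E}}_{-x}[S_{T_0^+}]\le \widehat{\mathbb{E}}_0[|S_1|]\,U(x)$ and then applies Markov, whereas you go straight to the tail $\widehat{\mathbb{P}}_{-x}(S_{T_0^+}>K)\le U(x)\,\widehat{\mathbb{P}}(X>K)$; your route is marginally cleaner since it avoids checking $\widehat{\mathbb{E}}_0[|S_1|]<\infty$, but the two are otherwise interchangeable.
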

	\begin{proof}
		We only prove the case that $X$ is non-lattice here since the proof for the lattice case is similar.  Noticing that for any $K>0$, the following holds:
		\begin{align}\label{step40}
			\widehat{\mathbb{E}}_{-x}\left(1-e^{- S_{T_0^+}/K}\right)  & \geq (1-e^{-1}) \widehat{\mathbb{P}}_{-x}\left(S_{T_0^+}>K\right) .
		\end{align}
		Also,
		from Lemma \ref{lem1} (i) and \eqref{Eq1}, it holds that
		\begin{align}\label{step41}
			\lim_{K\to +\infty}	\lim_{x\to+\infty} \widehat{\mathbb{E}}_{-x}\left(1-e^{- S_{T_0^+}/K}\right) &=  1- \lim_{K\to +\infty}	 	\frac{K\left(1-\widehat{\mathbb{E}}_0[e^{-\mathcal{H}_1 /K}]\right)}{ \widehat{\mathbb{E}}_0[\mathcal{H}_1]} \nonumber\\
			& = 0.
		\end{align}
		Therefore, combining \eqref{step40} and \eqref{step41}, for any $\varepsilon>0$, there 
		exist
		$K_\varepsilon>0$ and $N_\varepsilon$ such that
		\begin{align}\label{step42}
			\sup_{x>N_\varepsilon} \widehat{\mathbb{P}}_{-x}\left(S_{T_0^+}>K_\varepsilon \right)<\varepsilon.
		\end{align}
		For $x\in (0, N_\varepsilon]$, 	according to the definition of $T_0^+$, it holds that
		\begin{align}
			\widehat{\mathbb{E}}_{-x}\left(S_{T_0^+}\right)& =\sum_{n=1}^\infty \widehat{\mathbb{E}}_{-x}\left(S_n \ind_{\{ \overline{S}_{n-1}\leq 0< S_n\}}\right) \leq \sum_{n=1}^\infty \widehat{\mathbb{E}}_{-x}\left(|S_n-S_{n-1}| \ind_{\{ S_{n-1}\leq 0\}}\right) .
		\end{align}
		Since $S_n-S_{n-1}$ is independent of $\sigma(S_1,...,S_{n-1})$, we conclude from the above inequality that for any $x\in (0,N_\varepsilon]$,
		\begin{align}\label{step43}
			\widehat{\mathbb{E}}_{-x}\left(S_{T_0^+}\right) &  \leq \widehat{\mathbb{E}}_0		(|S_1|)
			\sum_{n=1}^\infty \widehat{\mathbb{P}}_{-x}(S_{n-1}\leq 0)= \widehat{\mathbb{E}}_0		(|S_1|)
			\sum_{n=0}^\infty \widehat{\mathbb{P}}_{0}(S_{n}\leq x)\nonumber\\
			&\leq  \widehat{\mathbb{E}}_0		(|S_1|)
			\sum_{n=0}^\infty \widehat{\mathbb{P}}_{0}(S_{n}\leq N_\varepsilon).
		\end{align}
		Therefore, combining \eqref{step42} and \eqref{step43}, we conclude that for any $K>K_\varepsilon$,
		\begin{align}
			\sup_{x>0} \widehat{\mathbb{P}}_{-x}\left(S_{T_0^+}>K\right) & \leq 	\sup_{x>N_\varepsilon} \widehat{\mathbb{P}}_{-x}\left(S_{T_0^+}>K_\varepsilon \right) + 	\sup_{x \in (0, N_\varepsilon]} \widehat{\mathbb{P}}_{-x}\left(S_{T_0^+}>K \right)\nonumber\\
			& <\varepsilon + \frac{1}{K}\sup_{x \in (0, N_\varepsilon]} \widehat{\mathbb{E}}_{-x}\left(S_{T_0^+}\right)\nonumber\\
			& \leq \varepsilon + \frac{1}{K}\widehat{\mathbb{E}}_0	(|S_1|)
			\sum_{n=0}^\infty \widehat{\mathbb{P}}_{0}(S_{n}\leq N_\varepsilon).
		\end{align}
		Since $\varepsilon>0$ is arbitrary, taking $K\to+\infty$ first and then $\varepsilon\to 0$ in the above inequality, we arrive at the desired result.
	\end{proof}

	The following renewal theorem will be used in studying the maximal displacement of branching random walk with killing.
	\begin{lemma}\label{lem3}
		Assume  $\mathbb{E}[X e^{\gamma X}]<\infty$.
		\begin{itemize}
			\item[(i)] 	If $X$ is non-lattice,
			then for any $y>0$,
			\begin{align}\label{eq3}
				\lim_{x\to+\infty} \sum_{\ell=1}^\infty    \widehat{\mathbb{E}}_{y-x}\Big( e^{-\gamma S_\ell}\ind_{\{S_j \in (-x, 0], 1\leq j\leq \ell-1 , S_\ell >0\}} \Big) = \widehat{\mathbb{P}}_{0}\left(I_\infty > -y\right)  \frac{1-\widehat{\mathbb{E}}_0[e^{-\gamma \mathcal{H}_1 }]}{\gamma \widehat{\mathbb{E}}_0[\mathcal{H}_1]}.
			\end{align}
			\item[(ii)] 	If $X$ is lattice with span $h>0$,
			then for any $i\geq1$,
			\begin{align}\label{eq3'}
				\lim_{k\to+\infty} \sum_{\ell=1}^\infty    \widehat{\mathbb{E}}_{(i-k)h}\Big( e^{-\gamma S_\ell}\ind_{\{S_j \in (-kh, 0], 1\leq j\leq \ell-1 , S_\ell >0\}} \Big) =\widehat{\mathbb{P}}_{0}\left(I_\infty >-ih\right) 	\frac{h(1-\widehat{\mathbb{E}}_0[e^{-\gamma \mathcal{H}_1 }])}{\widehat{\mathbb{E}}_0[\mathcal{H}_1](e^{\gamma h}-1)}.
			\end{align}
		\end{itemize}
	\end{lemma}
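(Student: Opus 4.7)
The plan is to reduce the sum in \eqref{eq3} to a single expectation, decompose it into two pieces, and evaluate each by combining Lemma \ref{lem1}(i) with a shift argument. I treat only the non-lattice case (i); the lattice case (ii) follows identically after replacing $x$ by $kh$ and $y$ by $ih$ and invoking Lemma \ref{lem1}(ii) in place of Lemma \ref{lem1}(i).

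Set $T_0^+ := \inf\{n \geq 1 : S_n > 0\}$ and $\tau_{-x} := \inf\{n \geq 1 : S_n \leq -x\}$. For $x > y$, the starting point $y-x$ lies in $(-x, 0)$ and the event inside the $\ell$-th summand of \eqref{eq3} is precisely $\{T_0^+ = \ell,\, \tau_{-x} \geq \ell\}$. Summing over $\ell$ therefore gives
\begin{align}
L(x) := \widehat{\mathbb{E}}_{y-x}\bigl[e^{-\gamma S_{T_0^+}}\ind_{\{T_0^+ \leq \tau_{-x}\}}\bigr] = \widehat{\mathbb{E}}_{y-x}\bigl[e^{-\gamma S_{T_0^+}}\bigr] - \widehat{\mathbb{E}}_{y-x}\bigl[e^{-\gamma S_{T_0^+}}\ind_{\{\tau_{-x} < T_0^+\}}\bigr],
\end{align}
and since $y-x \to -\infty$ as $x \to \infty$, Lemma \ref{lem1}(i) applied with $\theta = \gamma$ shows the first term on the right tends to $C := (1 - \widehat{\mathbb{E}}_0[e^{-\gamma \mathcal{H}_1}])/(\gamma\, \widehat{\mathbb{E}}_0[\mathcal{H}_1])$.

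For the second term I apply the strong Markov property at $\tau_{-x}$ to get $\widehat{\mathbb{E}}_{y-x}[\ind_{\{\tau_{-x} < T_0^+\}}\, g(S_{\tau_{-x}})]$, where $g(z) := \widehat{\mathbb{E}}_z[e^{-\gamma S_{T_0^+}}]$. On $\{\tau_{-x} < T_0^+\}$ one has $S_{\tau_{-x}} \leq -x$, and Lemma \ref{lem1}(i) gives $g(z) \to C$ as $z \to -\infty$, which upgrades to the uniform estimate $\sup_{z \leq -x} |g(z) - C| \to 0$. Hence the second term is asymptotic to $C \cdot \widehat{\mathbb{P}}_{y-x}(\tau_{-x} < T_0^+)$. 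To evaluate this limiting probability I shift by $x$: setting $\widetilde S_n := S_n + x$ yields $\widetilde S_0 = y$ and turns $\{\tau_{-x} < T_0^+\}$ into $\{\widetilde T_0^- < \widetilde T_x^+\}$, where $\widetilde T_0^- := \inf\{n \geq 1 : \widetilde S_n \leq 0\}$ and $\widetilde T_x^+ := \inf\{n \geq 1 : \widetilde S_n > x\}$. Since $\widehat{\mathbb{E}}_0[S_1] > 0$ by \eqref{step1}, $\widetilde T_x^+ \to \infty$ almost surely under $\widehat{\mathbb{P}}_y$, so these events monotonically increase to $\{\widetilde T_0^- < \infty\}$, whose probability equals $1 - \widehat{\mathbb{P}}_y(\min_{n \geq 1} S_n > 0) = 1 - \widehat{\mathbb{P}}_0(I_\infty > -y)$ by translation invariance (using $y > 0$). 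Combining the pieces gives
\begin{align}
\lim_{x \to \infty} L(x) = C - C\bigl(1 - \widehat{\mathbb{P}}_0(I_\infty > -y)\bigr) = \widehat{\mathbb{P}}_0(I_\infty > -y)\cdot C,
\end{align}
which is \eqref{eq3}.

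The only delicate step is the uniform bound $\sup_{z \leq -x} |g(z) - C| \to 0$, but this is immediate from the continuous-$x$ form of Lemma \ref{lem1}(i): the statement $\lim_{z \to -\infty} g(z) = C$ means exactly that for every $\varepsilon > 0$ there exists $X_0$ with $\sup_{z \leq -X_0} |g(z) - C| < \varepsilon$. I do not foresee other obstacles.
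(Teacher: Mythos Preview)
Your argument is correct and is a genuinely different route from the paper's. The paper proves \eqref{eq3} by a fixed-time truncation: it splits the sum at level $N$, shows the finite piece $R_{1,N}(x)$ vanishes, and then compares the tail $R_{2,N}(x)$ through two intermediate quantities $R_{3,N}(x)$, $R_{4,N}(x)$ (controlling the errors via the renewal bound $U(1)$), finally applying the Markov property at the deterministic time $N$ to factor out $\widehat{\mathbb P}_y(\underline S_N>0)$ and letting $N\to\infty$.

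Your approach instead applies the strong Markov property at the random time $\tau_{-x}$, which directly isolates the barrier effect: the subtraction $L(x)=\widehat{\mathbb E}_{y-x}[e^{-\gamma S_{T_0^+}}]-\widehat{\mathbb E}_{y-x}[e^{-\gamma S_{T_0^+}}\ind_{\{\tau_{-x}<T_0^+\}}]$ together with the uniform convergence $g(z)\to C$ for $z\le -x$ reduces the problem to the single probability $\widehat{\mathbb P}_{y-x}(\tau_{-x}<T_0^+)$, whose limit is read off by the shift $\widetilde S_n=S_n+x$ and monotone convergence. This is shorter and more conceptual; it avoids the three error estimates entirely. The paper's decomposition, on the other hand, has the advantage of being a template reused verbatim for the companion Lemma~\ref{lem3'}, where the integrand $\psi(u(x+S_\ell,x))e^{-\gamma S_\ell}$ depends on $x$ and a clean strong-Markov factorisation at $\tau_{-x}$ is not immediately available.
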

	\begin{proof}
		We only prove the non-lattice case here.
		For simplicity, for $N\geq 1$, define
		\begin{align}
			R_{1, N}(x)& :=  \sum_{\ell=1}^N     \widehat{\mathbb{E}}_{y-x}\Big( e^{-\gamma S_\ell}\ind_{\{S_j \in (-x, 0], 1\leq j\leq \ell-1 , S_\ell >0\}} \Big) ,
			\nonumber\\
			R_{2, N}(x) &:= \sum_{\ell=N+1}^\infty      \widehat{\mathbb{E}}_{y-x}\Big( e^{-\gamma S_\ell}\ind_{\{S_j \in (-x, 0], 1\leq j\leq \ell-1 , S_\ell >0\}} \Big)		.
		\end{align}
		Then,
		\begin{align}\label{Def-of-R}
			\sum_{\ell=1}^\infty    \widehat{\mathbb{E}}_{y-x}\Big( e^{-\gamma S_\ell}\ind_{\{S_j \in (-x, 0], 1\leq j\leq \ell-1 , S_\ell >0\}} \Big) = 	R_{1, N}(x) +	R_{2, N}(x).
		\end{align}
		For $R_{1,N}(x)$, noticing that for each fixed $N$,
		\begin{align}\label{step16}
			& \lim_{x\to+\infty} R_{1,N}(x)=  \lim_{x\to+\infty}  \sum_{\ell=1}^N     \widehat{\mathbb{E}}_{y-x}\Big( e^{-\gamma S_\ell}\ind_{\{S_j \in (-x, 0], 1\leq j\leq \ell-1 , S_\ell >0\}} \Big)  \nonumber\\
			& \leq  \lim_{x\to+\infty}   \sum_{\ell=1}^N   \widehat{\mathbb{P}}_{y-x}( S_\ell >0 ) =     \sum_{\ell=1}^N  \lim_{x\to+\infty} \widehat{\mathbb{P}}_{0}( S_\ell >x-y )=0.
		\end{align}
		Now we  treat $R_{2,N}(x)$. Define
		\begin{align}
			R_{3,N}(x) &:= \sum_{\ell=N+1}^\infty   \widehat{\mathbb{E}}_{y-x}\Big( e^{-\gamma S_\ell}\ind_{\{ \underline{S}_N  > -x, \overline{S}_{\ell-1} \leq 0< S_\ell \}} \Big) ,\nonumber\\
			R_{4,N}(x) &:=  \sum_{\ell=N+1}^\infty   \widehat{\mathbb{E}}_{y-x}\Big( e^{-\gamma S_\ell}\ind_{\{ \underline{S}_N > -x, \max_{N+1\leq  j\leq \ell-1} S_\ell \leq 0< S_\ell \}} \Big)  .
		\end{align}
		Set $T_{-x}^-:= \inf\left\{
		k\in \mathbb{N}:
		S_k \leq -x \right\}$. We mention here that
		$\widehat{\mathbb{P}}_y\left(T_0^-=+\infty \right)>0$.
		By the Markov property, we see that
		\begin{align}
			&\left|R_{2,N}(x)- R_{3,N}(x)\right|\nonumber\\
			& \leq \sum_{\ell=N+1}^\infty    \widehat{\mathbb{E}}_{y-x}\left( e^{-\gamma S_\ell}  (\ind_{\{ \underline{S}_N > -x, \overline{S}_{\ell-1}\leq 0< S_\ell \}} - \ind_{\{S_j \in (-x, 0], 1\leq j\leq \ell-1 , S_\ell >0\}}   ) \right) \nonumber\\
			&\leq   \sum_{\ell=N+1}^\infty    \widehat{\mathbb{E}}_{y-x}\left(e^{-\gamma S_\ell}\ind_{\{T_{-x}^-\in [N+1, \ell-1] , S_\ell>0\}} \right)\nonumber\\
			&\leq  \sum_{k=1}^\infty e^{-\gamma (k-1)}\sum_{\ell=N+1}^\infty    \widehat{\mathbb{P}}_{y}\left( T_0^-\in [N+1, \ell-1], S_\ell\in (x+k-1, x+k] \right).
		\end{align}
		Combining the Markov property and Lemma \ref{4rtgt5gt} (i), the above difference has upper bound
		\begin{align}\label{step14}
			&\left|R_{2,N}(x)- R_{3,N}(x)\right|\nonumber\\
			& \leq  \sum_{k=1}^\infty e^{-\gamma (k-1)}\sum_{\ell=N+1}^\infty    \sum_{q=N+1}^{\ell-1} \widehat{\mathbb{P}}_{y}\left( T_0^- =q,  S_\ell\in (x+k-1, x+k] \right) \nonumber\\
			& = \sum_{k=1}^\infty e^{-\gamma (k-1)} \sum_{q=N+1}^{\infty}    \widehat{\mathbb{E}}_{y}\left( \ind_{\{T_0^- =q\} }  \sum_{\ell=1}^\infty
			\widehat{\mathbb{P}}_{z} \left( S_\ell\in (x+k-1, x+k] \right)\Big|_{z=S_q}\right) \nonumber\\
			& \leq U(1)  \sum_{k=1}^\infty e^{-\gamma (k-1)} \sum_{q=N+1}^{\infty}    \widehat{\mathbb{P}}_y(T_0^- =q) = \frac{U(1)}{1-e^{-\gamma}}
			\widehat{\mathbb{P}}_y(T_0^- \in (N, +\infty)).
		\end{align}
		Also, noticing that
		\begin{align}
			&\left|R_{3,N}(x)- R_{4,N}(x)\right|\nonumber\\
			&\leq  \sum_{\ell=N+1}^\infty  \left|     \widehat{\mathbb{E}}_{y-x}\Big( e^{-\gamma S_\ell}\ind_{\{ \underline{S}_N> -x, \overline{S}_{\ell-1} \leq 0< S_\ell \}} \Big) -   \widehat{\mathbb{E}}_{y-x}\Big( e^{-\gamma S_\ell}\ind_{\{ \underline{S}_N > -x, \max_{N+1\leq  j\leq \ell-1} S_\ell \leq 0< S_\ell \}} \Big)  \right| \nonumber\\
			& =  \sum_{\ell=N+1}^\infty      \widehat{\mathbb{E}}_{y-x}\left( e^{-\gamma S_\ell}  \Big( \ind_{\{ \underline{S}_N > -x, \max_{N+1\leq  j\leq \ell-1} S_\ell \leq 0< S_\ell \}}-\ind_{\{ \underline{S}_N > -x, \overline{S}_{\ell-1} \leq 0< S_\ell \}}   \Big) \right) \nonumber\\
			& \leq  \sum_{\ell=N+1}^\infty      \widehat{\mathbb{E}}_{y-x}\left( e^{-\gamma S_\ell}   \ind_{\{  \overline{S}_N  >0, S_\ell>0\}}  \right)  \leq \sum_{k=1}^\infty e^{-\gamma(k-1)} \sum_{\ell=N+1}^\infty \widehat{\mathbb{P}}_{y-x} \left(  \overline{S}_N >0, S_\ell \in (k-1, k] \right).
		\end{align}
		Similarly combining the Markov property and Lemma  \ref{4rtgt5gt} (i), we obtain that
		\begin{align}\label{step15}
			&\left|R_{3,N}(x)-    R_{4,N}(x)\right|\nonumber\\
			&  \leq \sum_{k=1}^\infty e^{-\gamma(k-1)} \widehat{\mathbb{E}}_{y-x} \left( \ind_{\{ \overline{S}_N >0 \}}  \sum_{\ell=1}^\infty
			\widehat{\mathbb{P}}_{z}( S_\ell \in (k-1, k])\big|_{z=S_N} \right)\nonumber\\
			&\leq \frac{U(1) }{1-e^{-\gamma}} \widehat{\mathbb{P}}_{y-x}( \overline{S}_N  >0).
		\end{align}
		Therefore, combining \eqref{step14} and \eqref{step15}, we conclude that
		\begin{align}\label{step17}
			& \limsup_{x\to+\infty}  \left|R_{2,N}(x)- R_{4,N}(x)\right|
			\leq \frac{U(1)}{1-e^{-\gamma}} \widehat{\mathbb{P}}_y(T_0^- \in (N, +\infty)).
		\end{align}
		Finally, combining the Markov property and Lemma \ref{lem1}, we see that
		\begin{align}\label{step18}
			& \lim_{x\to+\infty} R_{4,N}(x)=  \lim_{x\to+\infty}  \sum_{\ell=N+1}^\infty   \widehat{\mathbb{E}}_{y-x}\big( e^{-\gamma S_\ell}\ind_{\{ \underline{S}_N > -x, \max_{N+1\leq  j\leq \ell-1} S_\ell \leq 0< S_\ell \}}\big )  \nonumber\\
			& = \lim_{x\to+\infty}   \widehat{\mathbb{E}}_{y-x}\left( \ind_{\{ \underline{S}_N  > -x\}}  \sum_{\ell=1}^\infty \widehat{\mathbb{E}}_{z}  \Big(e^{-\gamma S_\ell}\ind_{\{ \overline{S}_{\ell-1} \leq 0< S_\ell \}} \Big)\Big|_{z=S_N}\right)  \nonumber\\
			& = \lim_{x\to+\infty}   \widehat{\mathbb{E}}_{y}\left( \ind_{\{ \underline{S}_N > 0 \}} \sum_{\ell=1}^\infty  \widehat{\mathbb{E}}_{z}  \Big(e^{-\gamma S_\ell}\ind_{\{ \overline{S}_{\ell-1} \leq 0< S_\ell \}} \Big)\Big|_{z=S_N-x}\right) \nonumber\\
			& =  \frac{1-\widehat{\mathbb{E}}_0[e^{-\gamma \mathcal{H}_1 }]}{\gamma \widehat{\mathbb{E}}_0[\mathcal{H}_1]} \widehat{\mathbb{P}}_{y}\Big(\min_{j\leq N} S_j > 0\Big) \stackrel{N\to+\infty}{\longrightarrow}  \widehat{\mathbb{P}}_{y}\Big(\min_{j\geq 0} S_j > 0\Big) \frac{1-\widehat{\mathbb{E}}_0[e^{-\gamma \mathcal{H}_1 }]}{\gamma \widehat{\mathbb{E}}_0[\mathcal{H}_1]} .
		\end{align}
		Therefore, we conclude that
		\begin{align}
			& \lim_{x\to+\infty} \sum_{\ell=1}^\infty    \widehat{\mathbb{E}}_{y-x}\big( e^{-\gamma S_\ell}\ind_{\{S_j \in (-x, 0], 1\leq j\leq \ell-1 , S_\ell >0\}} \big) = \lim_{x\to +\infty} (R_{1,N}(x)+ R_{2,N}(x))\nonumber\\
			& \stackrel{\eqref{step16}} {=}\lim_{N\to+\infty} \lim_{x\to +\infty}  R_{2,N}(x) \stackrel{\eqref{step17}}{=} \lim_{N\to+\infty} \lim_{x\to +\infty}  R_{4,N}(x) \nonumber\\
			& \stackrel{\eqref{step18}}{=}  \widehat{\mathbb{P}}_{0}\left(I_\infty > -y\right) \frac{1-\widehat{\mathbb{E}}_0[e^{-\gamma \mathcal{H}_1 }]}{\gamma \widehat{\mathbb{E}}_0[\mathcal{H}_1]} ,
		\end{align}
		which implies \eqref{eq3}.
	\end{proof}

	\section{Proof of Theorem \ref{456hy5t1}: Tail probability of $M$}

	Recall that ${\rm DRI}_+(\mathbb{R})$ stands for the family of all non-negative directly Riemann integrable functions; see \eqref{DRI}. The following lemma shows that ${\rm DRI}_+(\mathbb{R})$ is closed under the multiplication of monotonic functions.
	\begin{lemma}\label{lem4}
		Suppose that $G$ and $H$ are two non-negative functions on $\mathbb{R}$ and that both of $G$ and $H$ are monotone. Then for any $a<b$,
		$G(x)H(x)\ind_{[a,b]}(x)\in {\rm DRI}_+(\mathbb{R})$.
	\end{lemma}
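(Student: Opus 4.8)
\emph{Strategy.} The plan is to show that $f:=GH\ind_{[a,b]}$ is a bounded, compactly supported function whose upper/lower step approximants $\overline f_\kappa$ and $\underline f_\kappa$ have integrals that pinch together as $\kappa\to0$. The point is that monotonicity makes $G$ and $H$ bounded and of bounded variation on $[a,b]$, and the oscillation of the product is controlled by the oscillations of the factors.

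First I would dispose of the finiteness requirement in the definition of ${\rm DRI}_+(\mathbb R)$. Since $G$ and $H$ are monotone and real-valued, they are bounded on $[a,b]$; set $M_G:=\sup_{[a,b]}G<\infty$, $M_H:=\sup_{[a,b]}H<\infty$ and $M:=M_GM_H$. Then $0\le f\le M\ind_{[a,b]}$, so for every $\kappa>0$ only finitely many dyadic-type cells $I_i:=(i\kappa,(i+1)\kappa]$ meet $[a,b]$, and $\int_{\mathbb R}\overline f_\kappa(x)\,\mathrm dx=\kappa\sum_i\sup_{I_i}f\le M\,(b-a+2\kappa)<\infty$. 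Next, for the condition \eqref{DRI}, fix a small $\kappa$ and split the cells $I_i$ into three types: those disjoint from $[a,b]$ (on which $\overline f_\kappa=\underline f_\kappa=0$); the at most two cells straddling an endpoint of $[a,b]$ (on each of which $\inf_{I_i}f=0$ and $\sup_{I_i}f\le M$, contributing at most $2M\kappa$ in total); and the cells $I_i\subseteq[a,b]$, on which $f=GH$. Thus
\[
\int_{\mathbb R}\bigl(\overline f_\kappa-\underline f_\kappa\bigr)(x)\,\mathrm dx\ \le\ 2M\kappa+\kappa\sum_{I_i\subseteq[a,b]}\mathrm{osc}_{I_i}(GH),\qquad \mathrm{osc}_I(\varphi):=\sup_I\varphi-\inf_I\varphi.
\]

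The key estimate is the product-oscillation inequality: for $x,y\in I\subseteq[a,b]$ one writes $G(x)H(x)-G(y)H(y)=G(x)\bigl(H(x)-H(y)\bigr)+H(y)\bigl(G(x)-G(y)\bigr)$, and since $G,H\ge 0$ this gives $\mathrm{osc}_I(GH)\le M_G\,\mathrm{osc}_I(H)+M_H\,\mathrm{osc}_I(G)$. Now I would invoke monotonicity: because $G$ is monotone, the oscillations of $G$ over the disjoint cells $I_i\subseteq[a,b]$ telescope, so $\sum_{I_i\subseteq[a,b]}\mathrm{osc}_{I_i}(G)\le |G(b)-G(a)|$, and likewise for $H$. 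Therefore
\[
\int_{\mathbb R}\bigl(\overline f_\kappa-\underline f_\kappa\bigr)(x)\,\mathrm dx\ \le\ 2M\kappa+\kappa\Bigl(M_G\,|H(b)-H(a)|+M_H\,|G(b)-G(a)|\Bigr)\ \xrightarrow[\kappa\to0]{}\ 0,
\]
which is exactly \eqref{DRI}. Hence $f=GH\ind_{[a,b]}\in{\rm DRI}_+(\mathbb R)$.

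\emph{Main obstacle.} There is no serious difficulty here; the only bookkeeping that needs care is the treatment of the two cells straddling $a$ and $b$ (where one cannot use the oscillation-of-product bound because $f$ jumps to $0$), and the telescoping of the oscillations of a monotone function across a grid not aligned with $[a,b]$ — equivalently, the fact that a monotone function restricted to a compact interval has bounded variation. As an alternative, one could bypass the oscillation computation entirely: $G$ and $H$ each have at most countably many discontinuities, hence so does $f=GH\ind_{[a,b]}$, which is therefore bounded, compactly supported and a.e.\ continuous, hence Riemann integrable by Lebesgue's criterion; for non-negative compactly supported functions this is precisely the property \eqref{DRI}. I would include the direct argument since it keeps the paper self-contained.
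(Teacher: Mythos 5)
Your proof is correct and follows essentially the same route as the paper's: you bound the product oscillation $\mathrm{osc}_I(GH)\le M_G\,\mathrm{osc}_I(H)+M_H\,\mathrm{osc}_I(G)$, telescope the monotone oscillations over the interior cells, and absorb the (at most two) boundary-straddling cells into an $O(\kappa)$ term. The only cosmetic difference is that the paper first reduces WLOG to $G$ increasing and $H$ decreasing and uses the slightly cruder constant $K\ge G(b+1)+H(a-1)$, whereas you keep the general monotone case with absolute values and supremums over $[a,b]$; the substance is identical.
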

	\begin{proof}
		If both of $G$ and $H$ are increasing (resp. decreasing), then $GH$ is increasing (resp. decreasing). Noticing that $1$ is decreasing and that $GH\cdot 1= G\cdot H$, thus without loss of generality, we assume that $G$ is increasing and that $H$ is decreasing. Also, set $F(x):=G(x)H(x)\ind_{[a,b]}(x)$.
		
		Since  $G$ and $H$ are monotonic, let $K$ be the constant such that $G(b+1)+H(a-1)\leq K<\infty$.  Then, for any $\kappa\in (0,1)$,
		\begin{align}
			\overline{F}_\kappa(x)= \sum_{i\in \mathbb{Z}} \ind_{(i\kappa, (i+1)\kappa]}(x) \sup_{z\in (i\kappa, (i+1)\kappa]} F(z)\leq K^2\ind_{[a-1,b+1]}(x),
		\end{align}
		which implies that $\int_{\mathbb{R}}\overline{F}_\kappa(x) \mathrm{d}x<\infty$.
		
		For each $i\in\mathbb{Z}$ such that $ (i\kappa, (i+1)\kappa] \subset  [a,b]$, it holds that
		for any $z,y\in (i\kappa, (i+1)\kappa]$,
		\begin{align}
			\left|F(z)-F(y)\right|&=	\left|G(z)H(z)-G(y)H(y)\right|\cr
			&\leq G(z)\left|H(z)-H(y)\right| + H(y)\left|G(z)-G(y)\right|\nonumber\\
			&\leq K\Big(\left|H(z)-H(y)\right| + \left|G(z)-G(y)\right|\Big)\nonumber\\
			&\leq K\Big(H(i\kappa)-H((i+1)\kappa) + G((i+1)\kappa)-G(i\kappa) \Big),
		\end{align}
		where the last equality follows from the monotonicity of $G$ and $H$. Thus, for each $i\in\mathbb{Z}$ such that $ (i\kappa, (i+1)\kappa] \subset  [a,b]$, we have
		\begin{align}\label{fr4tt5t5}
			\sup_{z\in (i\kappa, (i+1)\kappa]} F(z)- \inf_{z\in (i\kappa, (i+1)\kappa]} F(z)\leq K\Big(H(i\kappa)-H((i+1)\kappa) + G((i+1)\kappa)-G(i\kappa) \Big).
		\end{align}
		For each $(i\kappa, (i+1)\kappa]\cap [a,b]\neq \emptyset$ and $(i\kappa, (i+1)\kappa] \nsubseteq [a,b]$, we also have that
		\begin{align}\label{5huhy6de3}
			\sup_{z\in (i\kappa, (i+1)\kappa]} F(z)- \inf_{z\in (i\kappa, (i+1)\kappa]} F(z) {\leq}\sup_{z\in (i\kappa, (i+1)\kappa]} F(z)\leq K^2.
		\end{align}
		Let $j_1,j_2$ be constants such that $\{i\in \mathbb{Z}: (i\kappa, (i+1)\kappa] \subset [a,b]\} = \{i: j_1\leq i\leq j_2\}$. By \eqref{fr4tt5t5} and \eqref{5huhy6de3}, for each $\kappa>0$,
		\begin{align}\label{3fgteu7q}
			\int_{-\infty}^\infty \overline{F}_\kappa(x)-\underline{F}_\kappa(x) \mathrm{d}x & \leq 2 \kappa K^2+ K\kappa \sum_{i=j_1}^{j_2} \Big(H(i\kappa)-H((i+1)\kappa) + G((i+1)\kappa)-G(i\kappa) \Big) \nonumber\\
			& = 2 \kappa K^2+ K\kappa \Big(H(j_1\kappa)- H(j_2\kappa)+ G(j_2\kappa)-G(j_1\kappa)\Big) \leq 4\kappa K^2,
		\end{align}
		which implies that $\lim_{\kappa\to 0} 	 \int_{-\infty}^\infty \overline{F}_\kappa(x)-\underline{F}_\kappa(x) \mathrm{d}x =0$. Therefore, we complete the proof of the lemma.
	\end{proof}

	The following lemma gives several integrability results for $\psi$ under some mild conditions for the offspring law. Recall that
	\begin{align}\label{4gr6hytjy5}
		\psi(s)&=\Big(\sum^{\infty}_{n=0}p_n(1-s)^n\Big)+ms-1=\sum^{\infty}_{n=1}p_n((1-s)^n+ns-1).
	\end{align}
	By the Bernoulli inequality $(1+x)^n>1+nx$ for $x>-1$ and $n>0$, we have
	$$\psi(s)\geq 0,\qquad s\in[0,1].$$
	Moreover, $\psi$ is a continuous function in $[0,1]$ and that
	\begin{align}\label{psidaoshu}
		\psi'(s)= m-\sum_{n=1}^\infty np_n(1-s)^{n-1}\geq 0.
	\end{align}

	\begin{lemma}\label{43tt67i8de3}
		Suppose that $f$ is a non-negative
		measurable function
		on $\mathbb{R}$ such that $f (x)\leq \min\{1, e^{-\gamma x}\}$ for all $x\in \mathbb{R}$.
		If $\sum^{\infty}_{k=1}(k\log k)p_k<\infty$, then
		\[
		\int_0^\infty \psi(f(x)) e^{\gamma x}\mathrm{d}x<\infty.
		\]
		In particular, it holds that
		\[
		\sum_{n=1}^\infty \psi(e^{-\gamma n})e^{\gamma n}<\infty.
		\]
	\end{lemma}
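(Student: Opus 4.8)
The plan is to reduce the statement to a single sharp two‑sided estimate on the building block $\phi_k(s):=(1-s)^k+ks-1$, and then to integrate (resp.\ sum) it against the offspring distribution. First I would reduce to the case $f(x)=e^{-\gamma x}$: since $\psi'\ge 0$ on $[0,1]$ by \eqref{psidaoshu}, $\psi$ is non-decreasing there, and the hypothesis $f(x)\le\min\{1,e^{-\gamma x}\}$ gives $0\le f(x)\le e^{-\gamma x}\le 1$ for $x\ge 0$, whence $\psi(f(x))\le\psi(e^{-\gamma x})$. So it suffices to bound $\int_0^\infty\psi(e^{-\gamma x})e^{\gamma x}\,\mathrm dx$ and $\sum_{n\ge1}\psi(e^{-\gamma n})e^{\gamma n}$, which I will treat in parallel.

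The heart of the matter is the estimate
\[
0\le\phi_k(s)\le\min\Big\{ks,\ \binom{k}{2}s^2\Big\},\qquad k\ge1,\ s\in[0,1].
\]
Nonnegativity is the Bernoulli inequality, already recorded after \eqref{4gr6hytjy5}; the bound $\phi_k(s)\le ks$ holds because $(1-s)^k\le1$; and $\phi_k(s)\le\binom{k}{2}s^2$ follows from Taylor's formula with Lagrange remainder applied to $t\mapsto t^k$ at $t=1$, i.e.\ $(1-s)^k=1-ks+\tfrac12 k(k-1)(1-\xi)^{k-2}s^2$ for some $\xi\in(0,s)$ when $k\ge2$ (and $\phi_1\equiv0$). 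I expect the subtle point to be that \emph{both} halves are genuinely needed: using only the quadratic bound would give finiteness only under the much stronger assumption $\sum_k k^2 p_k<\infty$, while the linear bound alone yields only $\psi(e^{-\gamma x})\le m e^{-\gamma x}$, which is not integrable against $e^{\gamma x}$. One must use the quadratic bound where $s=e^{-\gamma x}$ is small and the linear bound where it is not, and it is precisely the crossover at $s\asymp 1/k$ that manufactures the weight $k\log k$.

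With $\psi(s)=\sum_{k\ge1}p_k\phi_k(s)$ and Tonelli, $\int_0^\infty\psi(e^{-\gamma x})e^{\gamma x}\,\mathrm dx=\sum_{k\ge1}p_k\int_0^\infty\phi_k(e^{-\gamma x})e^{\gamma x}\,\mathrm dx$. For fixed $k\ge2$ I would split the $x$-integral at $x_k:=\tfrac1\gamma\log\tfrac{k-1}{2}$: on $[0,x_k]$ the integrand is $\le\phi_k(e^{-\gamma x})e^{\gamma x}\le k$ (using $\phi_k(s)\le ks$), contributing at most $kx_k\le\tfrac1\gamma k\log k$; on $(x_k,\infty)$ it is $\le\binom{k}{2}e^{-2\gamma x}e^{\gamma x}=\binom{k}{2}e^{-\gamma x}$ (using $\phi_k(s)\le\binom{k}{2}s^2$), whose integral is $\tfrac1\gamma\binom{k}{2}e^{-\gamma x_k}=\tfrac1\gamma k$. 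Hence $\int_0^\infty\phi_k(e^{-\gamma x})e^{\gamma x}\,\mathrm dx\le C\gamma^{-1}k\log k$ for $k\ge2$ and $=0$ for $k=1$. (Alternatively, the substitution $u=e^{-\gamma x}$ turns this integral into $\tfrac1\gamma\int_0^1 u^{-2}\phi_k(u)\,\mathrm du$, which integration by parts — using $\phi_k(0)=\phi_k'(0)=0$, $\phi_k(1)=k-1$ — evaluates exactly as $\tfrac1\gamma(kH_{k-1}-(k-1))$ with $H_{k-1}=\sum_{j<k}1/j$.) Summing $p_k$ times these estimates and invoking $\sum_k(k\log k)p_k<\infty$ yields the first assertion. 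The series $\sum_{n\ge1}\psi(e^{-\gamma n})e^{\gamma n}=\sum_{k\ge1}p_k\sum_{n\ge1}\phi_k(e^{-\gamma n})e^{\gamma n}$ is handled identically, now splitting the $n$-sum at $n\asymp\tfrac1\gamma\log k$: the $O(\gamma^{-1}\log k)$ initial terms are each $\le k$, and the remaining terms sum to at most $\binom{k}{2}\sum_n e^{-\gamma n}=O(k)$, so $\sum_{n\ge1}\phi_k(e^{-\gamma n})e^{\gamma n}\le Ck\log k$ and therefore $\sum_{n\ge1}\psi(e^{-\gamma n})e^{\gamma n}\le C\sum_k(k\log k)p_k<\infty$. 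The only bookkeeping nuisance is when $k$ is so small that the ``initial stretch'' $[0,x_k]$ is empty, but there $\log k$ is bounded below and the bound $Ck\log k$ holds trivially.
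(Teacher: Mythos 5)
Your argument is correct. The reduction to $f(x)=e^{-\gamma x}$ via the monotonicity of $\psi$ (justified by \eqref{psidaoshu}) is exactly the paper's first step, but for the resulting estimate $\int_0^\infty\psi(e^{-\gamma x})e^{\gamma x}\,\mathrm dx<\infty$ the paper simply cites Lemma 7 of \cite{houzhu25}, whereas you prove it from scratch. Your two-sided bound $0\le\phi_k(s)\le\min\bigl\{ks,\binom{k}{2}s^2\bigr\}$, with the split of the $x$-integral at the crossover $s\asymp 1/k$, is precisely the mechanism that converts the $k\log k$ moment condition into integrability; you have also correctly identified the pitfalls of using only one of the two bounds (the quadratic alone would force $\sum k^2 p_k<\infty$, the linear alone is not integrable against $e^{\gamma x}$). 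The exact evaluation $\int_0^1 u^{-2}\phi_k(u)\,\mathrm du=kH_{k-1}-(k-1)$ is a useful sanity check that the $k\log k$ rate is sharp. For the ``in particular'' series, redoing the splitting is perfectly fine; a slightly slicker alternative consistent with the rest is to observe that
\begin{align}
\frac{\phi_k(s)}{s}=k-\sum_{j=0}^{k-1}(1-s)^j
\end{align}
is increasing in $s$, so $\psi(s)/s$ is increasing, hence $x\mapsto\psi(e^{-\gamma x})e^{\gamma x}$ is decreasing, and the series is dominated by its first term plus the integral already controlled. In short: same outer reduction, but you supply a self-contained proof of the core estimate that the paper outsources to a reference.
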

	\begin{proof}
		Combining the monotonicity property of $\psi$ and \cite[Lemma 7]{houzhu25}, we see that
		\[
		\int_0^\infty \psi(f(x)) e^{\gamma x}\mathrm{d}x\leq  \int_0^\infty \psi(e^{-\gamma x}) e^{\gamma x}\mathrm{d}x<\infty,
		\]
		which implies the desired result.
	\end{proof}
	Next, we are going to give a sharp upper bound of $u(x)$. For each $|\omega|=n$ and $0\leq j\leq n-1$, we use $\omega_j$ to denote the label of the ancestor of $\omega$ at generation $j$.
	In particular,
	$\omega_n:= \omega$.
	We have the following useful many-to-one formula (see \cite[p.5]{shi2015}):
	for any $n\in \mathbb{N}$ and any non-negative bounded function $f:\mathbb{R}^n \to [0,\infty)$, it holds that
	\begin{align}\label{many-to-one-formula}
		\mathbb{E}\Bigg(\sum_{|\omega|=n} f(V(\omega_j), 1\leq j\leq n) \Bigg)= m^n\mathbb{E}_0\left(f(S_j,1\leq j\leq n )\right).
	\end{align}
	Denote by $\mathcal{G}_n,n\geq0$ the natural filtration generated by the branching random walk. According to the many-to-one formula \eqref{many-to-one-formula} and Markov property, it is easy to see that
	$$
	\Bigg(\sum_{|\omega|=n}e^{\gamma V(\omega)}, \mathcal{G}_n, n\geq 0, \mathbb{P}\Bigg)
	$$
	is a non-negative martingale. Using Doob's inequality, we have the following result.
	\begin{remark}
		For $x>0$, it follows that
		\begin{align}\label{5tgt4htyh5y}
			u(x)=\mathbb{P}(M>x)\leq \mathbb{P}\left(\max_{n\geq1}\sum_{|\omega|=n}e^{\gamma V(\omega)}>e^{\gamma x}\right)\leq e^{-\gamma x}.
		\end{align}
		Therefore, Lemma \ref{43tt67i8de3} holds with $f=u$.
	\end{remark}
	In Lemma \ref{lem1}, we show the renewal theorem holds for test function $f\in  {\rm DRI}_+(\mathbb{R})$ with compact support. The following lemma shall use the truncation technique to show that this renewal theorem also holds for some function without compact support.
	\begin{lemma}\label{lem1'}
		Assume  $\mathbb{E}[Xe^{\gamma X}]<\infty$ and $\sum_{k=1}^\infty k(\log k)p_k <\infty$. Suppose that $f$ is a non-negative decreasing function on $\mathbb{R}$ such that for any $z\in \mathbb{R}$,
		$
		f(z)\leq \min\{1, e^{-\gamma z}\} .
		$
		\begin{itemize}
			\item[(i)] If $X$ is non-lattice, then
			\begin{align}\label{eq5}
				& \lim_{x\to+\infty} \sum_{\ell=1}^\infty  \widehat{\mathbb{E}}_{-x}\left[\psi(f(-S_\ell)) e^{-\gamma S_\ell}\ind_{\{\overline{S}_\ell \leq 0\}} \right]=\frac{1}{\widehat{\mathbb{E}}_0[S_1]}
				\widehat{\mathbb{E}}_0\left[\int_{-\infty}^{I_\infty} \psi(f(-z))e^{-\gamma z} \mathrm{d}z \right].
			\end{align}
			\item[(ii)] If $X$ is lattice with span $h$, then \begin{align}\label{eq5'}
				& \lim_{k\to+\infty} \sum_{\ell=1}^\infty  \widehat{\mathbb{E}}_{-kh}\left[\psi(f(-S_\ell)) e^{-\gamma S_\ell}\ind_{\{\overline{S}_\ell\leq 0\}} \right]= \frac{h}{\widehat{\mathbb{E}}_0[S_1]}
				\widehat{\mathbb{E}}_0\left[\sum_{i=-\infty}^{I_\infty/h}\psi(f(-ih))e^{-\gamma ih} \right].
			\end{align}
		\end{itemize}
	\end{lemma}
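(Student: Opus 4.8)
The plan is to prove the non-lattice identity \eqref{eq5} by truncating the test function to a compact set, applying the renewal theorem \eqref{eq2} of Lemma \ref{lem1}, and then removing the truncation by a uniform tail estimate; the lattice case \eqref{eq5'} is identical, with \eqref{eq2} replaced by \eqref{eq2'} and all spatial integrals replaced by sums over $h\mathbb{Z}$. Write $g(z):=\psi(f(-z))e^{-\gamma z}$, so that the left side of \eqref{eq5} is $\sum_{\ell\ge1}\widehat{\mathbb{E}}_{-x}[g(S_\ell)\ind_{\{\overline{S}_\ell\le0\}}]$. Two observations set up the argument. First, since $f$ is decreasing and $\psi$ is non-negative and non-decreasing on $[0,1]$ by \eqref{psidaoshu}, the map $z\mapsto\psi(f(-z))$ is non-negative and non-decreasing while $z\mapsto e^{-\gamma z}$ is positive and decreasing, so Lemma \ref{lem4} gives that the compactly supported truncation $g_K:=g\cdot\ind_{[-K,0]}$ lies in ${\rm DRI}_+(\mathbb{R})$ for every $K>0$. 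Second, for $z\le0$ the hypothesis gives $f(-z)\le\min\{1,e^{\gamma z}\}=e^{\gamma z}$, hence $g(z)\le\psi(e^{\gamma z})e^{-\gamma z}$ by monotonicity of $\psi$, and therefore Lemma \ref{43tt67i8de3} (applied with the admissible choice $f(x)=e^{-\gamma x}$ on $[0,\infty)$) yields
\[
\int_{-\infty}^{0}g(z)\,\mathrm{d}z\le\int_{0}^{\infty}\psi(e^{-\gamma w})e^{\gamma w}\,\mathrm{d}w<\infty ;
\]
since $\widehat{\mathbb{P}}_0(I_\infty\le0)=1$, the right side of \eqref{eq5} is finite.

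Applying \eqref{eq2} to $g_K$ gives $\lim_{x\to\infty}\sum_{\ell\ge1}\widehat{\mathbb{E}}_{-x}[g_K(S_\ell)\ind_{\{\overline{S}_\ell\le0\}}]=\tfrac{1}{\widehat{\mathbb{E}}_0[S_1]}\widehat{\mathbb{E}}_0[\int_{-\infty}^{I_\infty}g_K(z)\,\mathrm{d}z]=:B_K$, and since $0\le g_K\uparrow g$, monotone convergence gives $B_K\uparrow B:=\tfrac{1}{\widehat{\mathbb{E}}_0[S_1]}\widehat{\mathbb{E}}_0[\int_{-\infty}^{I_\infty}g(z)\,\mathrm{d}z]$, the claimed limit. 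On $\{\overline{S}_\ell\le0\}$ one has $S_\ell\le0$, so $g(S_\ell)-g_K(S_\ell)=g(S_\ell)\ind_{\{S_\ell<-K\}}\ge0$; consequently everything reduces to showing that
\[
T_K(x):=\sum_{\ell\ge1}\widehat{\mathbb{E}}_{-x}\big[g(S_\ell)\ind_{\{S_\ell<-K,\ \overline{S}_\ell\le0\}}\big]
\]
satisfies $\lim_{K\to\infty}\sup_{x>0}T_K(x)=0$.

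To control $T_K$ I would slice space dyadically and invoke the renewal function $U$. On $\{S_\ell\in(-(j+1),-j]\}$ we have $g(S_\ell)\le\psi(e^{\gamma S_\ell})e^{-\gamma S_\ell}\le\psi(e^{-\gamma j})e^{\gamma(j+1)}$, so
\[
T_K(x)\le e^{\gamma}\sum_{j\ge K}\psi(e^{-\gamma j})e^{\gamma j}\sum_{\ell\ge1}\widehat{\mathbb{P}}_{-x}\big(S_\ell\in(-(j+1),-j]\big);
\]
moreover $\sum_{\ell\ge1}\widehat{\mathbb{P}}_{-x}(S_\ell\in(-(j+1),-j])\le\sum_{\ell\ge0}\widehat{\mathbb{P}}_0(S_\ell\in(x-j-1,x-j])=U(x-j)-U(x-j-1)\le U(1)$ by the subadditivity $U(\cdot+\cdot)\le U(\cdot)+U(\cdot)$ of Lemma \ref{4rtgt5gt}(i), uniformly in $x$ and $j$. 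Hence $T_K(x)\le e^{\gamma}U(1)\sum_{j\ge K}\psi(e^{-\gamma j})e^{\gamma j}$, which tends to $0$ as $K\to\infty$ by the summability in Lemma \ref{43tt67i8de3}, uniformly in $x$. Combining this with $T_K(x)\ge0$ and $B_K\uparrow B$ yields, for every $K$,
\[
B_K\le\liminf_{x\to\infty}\sum_{\ell\ge1}\widehat{\mathbb{E}}_{-x}\big[g(S_\ell)\ind_{\{\overline{S}_\ell\le0\}}\big]\le\limsup_{x\to\infty}\sum_{\ell\ge1}\widehat{\mathbb{E}}_{-x}\big[g(S_\ell)\ind_{\{\overline{S}_\ell\le0\}}\big]\le B_K+e^{\gamma}U(1)\sum_{j\ge K}\psi(e^{-\gamma j})e^{\gamma j},
\]
and letting $K\to\infty$ proves \eqref{eq5}. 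The only genuinely delicate step is the uniform-in-$x$ tail estimate for $T_K(x)$: one must keep the spatial localization $\{S_\ell\in(-(j+1),-j]\}$ before applying the subadditivity of $U$, because the cruder bound $\sum_{\ell}\widehat{\mathbb{P}}_{-x}(S_\ell<-K)=U(x-K)$ is not bounded in $x$, so simply throwing away the restriction $\overline{S}_\ell\le0$ and summing over all positions below $-K$ would fail.
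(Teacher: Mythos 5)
Your proof is correct and follows essentially the same route as the paper's: truncate $\psi(f(-\cdot))e^{-\gamma\cdot}$ to a compact interval, verify direct Riemann integrability via Lemma \ref{lem4}, invoke the renewal limit \eqref{eq2} of Lemma \ref{lem1}, and remove the truncation using the uniform-in-$x$ tail bound built from the slicing $\{S_\ell\in(-(j+1),-j]\}$, the subadditivity of $U$, and the summability $\sum_j\psi(e^{-\gamma j})e^{\gamma j}<\infty$ from Lemma \ref{43tt67i8de3}. The only cosmetic differences are that you sandwich via $B_K\le\liminf\le\limsup\le B_K+\varepsilon_K$ and pass $K\to\infty$ using monotone convergence on the limit side, whereas the paper bounds the absolute deviation from the final answer directly; your parenthetical remark about why the cruder bound $U(x-K)$ fails is an accurate diagnosis of why the annular slicing is indispensable.
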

	\begin{proof}
		We only prove the non-lattice case here.
		For each $N\geq 2$, define
		$$f_N(z):= \psi(f(-z))e^{-\gamma z} \ind_{[-N,0]} (z).$$
		By the monotonicity of $\psi$ (see \eqref{psidaoshu}) and Lemma \ref{lem4} we see that for each $N\geq 2$, $f_N\in {\rm DRI}_+(\mathbb{R})$ and that $f_N$ has compact support. Therefore, by Lemma \ref{lem1},
		\begin{align}\label{step28}
			&\lim_{x\to+\infty}  \sum_{\ell=1}^\infty  \widehat{\mathbb{E}}_{-x}\left[f_N(S_\ell)\ind_{\{\overline{S}_\ell \leq 0\}} \right] = \frac{1}{\widehat{\mathbb{E}}_0[S_1]}
			\widehat{\mathbb{E}}_0\left[\int_{-\infty}^{I_\infty} f_N(z)\mathrm{d}z\right].
		\end{align}
		Now for each $N\geq 2$, by \eqref{step28},
		\begin{align}\label{step2}
			&\limsup_{x\to +\infty} \left|\sum_{\ell=1}^\infty  \widehat{\mathbb{E}}_{-x}\left[\psi(f(-S_\ell)) e^{-\gamma S_\ell}\ind_{\{\overline{S}_\ell \leq 0\}} \right] - \frac{1}{\widehat{\mathbb{E}}_0[S_1]}
			\widehat{\mathbb{E}}_0\left[\int_{-\infty}^{I_\infty} \psi(f(-z))e^{-\gamma z} \mathrm{d}z \right]
			\right| \nonumber\\
			&\leq \limsup_{x\to +\infty} \sum_{\ell=1}^\infty  \widehat{\mathbb{E}}_{-x}\left[\psi(f(-S_\ell)) e^{-\gamma S_\ell}\ind_{\{\overline{S}_\ell \leq 0\}} \ind_{\{S_\ell < -N\}}\right] \nonumber\\
			& \qquad + \lim_{x\to +\infty} \left|  \sum_{\ell=1}^\infty  \widehat{\mathbb{E}}_{-x}\left[f_N(S_n)\ind_{\{\overline{S}_\ell\leq 0\}} \right] - \frac{1}{\widehat{\mathbb{E}}_0[S_1]}
			\widehat{\mathbb{E}}_0\left[\int_{-\infty}^{I_\infty} f_N(z)\mathrm{d}z\right]
			\right| \nonumber\\
			&\qquad + \frac{1}{\widehat{\mathbb{E}}_0[S_1]}
			\widehat{\mathbb{E}}_0\left[ \int_{-\infty}^{I_\infty} \psi(f(-z)) e^{-\gamma z}\ind_{\{z<-N\}}\mathrm{d}z \right]\nonumber\\
			& \leq \limsup_{x\to +\infty} \sum_{\ell=1}^\infty  \widehat{\mathbb{E}}_{-x}\left[\psi(e^{\gamma S_\ell}) e^{-\gamma S_\ell} \ind_{\{S_\ell < -N\}}\right] + \frac{1}{\widehat{\mathbb{E}}_0[S_1]} \int_{-\infty}^{-N} \psi(f(-z)) e^{-\gamma z}\mathrm{d}z.
		\end{align}
		According to the definition of $U$, $$\sum_{n=0}^\infty \widehat{\mathbb{P}}_{-x}(S_n\in (k-1, k]) = U(k+x)- U(k+x-1)\leq U(1).$$ Therefore, according to the monotonicity of $\psi$,  for each $N\geq 2$,
		\begin{align}\label{4tt0or4t5q}
			\sum^{\infty}_{\ell=1}\widehat{\mathbb{E}}_{-x}\left[\psi(e^{\gamma S_\ell})e^{-\gamma S_\ell}\ind_{\{S_\ell <-N\}}\right] &\leq \sum_{k=-\infty}^{-N} \sum^{\infty}_{\ell=1}\psi(e^{\gamma k})e^{-\gamma (k-1)} \widehat{\mathbb{P}}_{-x}\left(S_\ell\in (k-1, k]\right)\nonumber\\
			& = e^{\gamma} \sum_{k=N}^{\infty} \psi(e^{-\gamma k})e^{\gamma k} \sum^{\infty}_{\ell=1} \widehat{\mathbb{P}}_{-x}\left(S_\ell\in (-k-1, -k]\right)\nonumber\\
			&\leq U(1)e^{\gamma}\sum_{k=N}^{\infty} \psi(e^{-\gamma k})e^{\gamma k}.
		\end{align}
		Now taking $N\to +\infty $ in  \eqref{step2},  combining Lemma \ref{43tt67i8de3}  and \eqref{4tt0or4t5q}, we get that
		\begin{align}
			&\limsup_{x\to +\infty} \left|\sum_{\ell=1}^\infty  \widehat{\mathbb{E}}_{-x}\left[\psi(f(-S_\ell)) e^{-\gamma S_\ell}\ind_{\{\overline{S}_\ell \leq 0\}} \right] -
			\widehat{\mathbb{E}}_0\left[\int_{-\infty}^{I_\infty} \psi(f(-z))e^{-\gamma z} \mathrm{d}z \right]
			\right| \nonumber\\
			& \stackrel{\eqref{step2}}{\leq} \limsup_{N\to+\infty} \limsup_{x\to +\infty} \sum_{\ell=1}^\infty  \widehat{\mathbb{E}}_{-x}\left[\psi(e^{\gamma S_\ell}) e^{-\gamma S_\ell} \ind_{\{S_\ell < -N\}}\right] + \lim_{N\to+\infty} \frac{1}{\widehat{\mathbb{E}}_0[S_1]} \int_{-\infty}^{-N} \psi(f(-z)) e^{-\gamma z}\mathrm{d}z\nonumber\\
			& \stackrel{\eqref{4tt0or4t5q}}{\leq}  \lim_{N\to+\infty} U(1)e^{\gamma}\sum_{k=N}^{\infty} \psi(e^{-\gamma k})e^{\gamma k}  +\lim_{N\to+\infty} \frac{1}{\widehat{\mathbb{E}}_0[S_1]} \int_{-\infty}^{-N} \psi(u(-z)) e^{-\gamma z}\mathrm{d}z=0,
		\end{align}
		which implies	\eqref{eq5}.
	\end{proof}
	
	Now, we are ready to prove Theorem \ref{456hy5t1}. The sketch of the proof is as follows. We first use the iteration technique to show that $e^{\gamma x}u(x)$ equals to a functional of the random walk $(S_n,n\geq0,\widehat{\mathbb{P}}_{-x}).$ Then, by the renewal theorem proved in
	Lemmas \ref{lem1} and \ref{lem1'},
	we show this functional converges to some constant as $x\to+\infty$. Finally, it suffices to show the limit constant is non-zero. We prove it  with the help of Corollary \ref{Cor:tightness-overshoot}.
	\begin{proof}[Proof of Theorem \ref{456hy5t1}]
		We only prove the case that $X$ is non-lattice.
		Noticing that, in our setting of the branching random walk, the original particle jumps first and then branches.
		Moreover, as mentioned in \cite[proof of Lemma 4.1]{zheng17ptrf}, if $Z_1(\mathbb{R})=0$, then $M=0$.
		Let $ \{M^{(n)}: n\in \mathbb{N}\}$ be i.i.d. copy of $M$.
		So,
		by the Markov property of the branching random walk, it follows that for $x\geq 0$,
		\begin{align}\label{4t5t5gyj}
			1-u(x)&=\mathbb{P}(M\leq x)\cr
			&=	p_0	+ \sum^{\infty}_{n=1}p_n\mathbb{P}(X\leq x,X+M^{(1)}\leq x,X+M^{(2)}\leq x,...,+M^{(n)}\leq x)\cr
			&=		p_0
			+\mathbb{E}\Big(\ind_{\{X\leq x\}}\sum^{\infty}_{n=1}p_n(1-u(x-X))^n\Big)\cr
			&=	p_0+ \mathbb{E}\left(\left(\psi(u(x-X)) +1-p_0-m u(x-X)\right)\ind_{\{X\leq x\}}\right),
		\end{align}
		where the last equality follows from \eqref{4gr6hytjy5}.
		Therefore, it holds that for any $x\geq 0$,
		\begin{align}\label{step3}
			u(x) &=
			1 -p_0-  \mathbb{E}\left(\left(\psi(u(x-X)) +1-p_0-m u(x-X)\right)\ind_{\{X\leq x\}}\right)\nonumber\\
			& =	(1-p_0)\mathbb{P}(X>  x)
			+  m \mathbb{E}\left(u(x-X) \ind_{\{X\leq x\}}\right) -   \mathbb{E}\left(\psi(u(x-X)) \ind_{\{X\leq x\}}\right).
		\end{align}
		Recall that $\{X_i\}_{i\geq1}$ are i.i.d. copies of the step size $X$
		and that $S_n=\sum_{i=1}^nX_i$ under $\mathbb{P}_0$.
		According to iteration,  we see that for any $N\geq 2$,
		\begin{align}
			&u(x) =	(1-p_0)\mathbb{P}_0(S_1>  x)
			-   \mathbb{E}_0\left(\psi(u(x-S_1)) \ind_{\{S_1\leq x\}}\right) \nonumber\\
			&\qquad  +  m \mathbb{E}_0\left( \ind_{\{S_1\leq x\}}\left(	(1-p_0)\mathbb{P}(X>  z)
			-   \mathbb{E}_0 \left[\psi(u(z-X)) \ind_{\{X\leq z\}}\right]\big|_{z=x-S_1}\right)\right) \nonumber\\
			&\qquad  +  m^2  \mathbb{E}_0\left( \ind_{\{S_1\leq x\}}\left(  \mathbb{E}\left(u(z-X) \ind_{\{X\leq z\}}\right) \big|_{z=x-S_1}\right)\right) \nonumber\\
			&= \cdots =(1-p_0) \sum_{\ell=1}^N
			m^{\ell-1}\mathbb{P}_0\left(
			\overline{S}_{\ell-1}
			\leq x< S_\ell\right) - \sum_{\ell=1}^N m^{\ell-1} \mathbb{E}_0\left(\psi(u(x-S_\ell)) \ind_{\{\overline{S}_\ell
				\leq x\}} \right)\nonumber\\
			&\qquad + m^{N} \mathbb{E}_0\left( u(x-S_N) \ind_{\{	\overline{S}_N
				\leq x\}} \right).
		\end{align}
		Since $m\in (0,1)$ and $u\leq 1$, taking $N\to+\infty$ in the above equation yields that for any $x>0$,
		\begin{align}
			&u(x) =	(1-p_0)\sum_{\ell=1}^\infty  m^{\ell-1}\mathbb{P}_0\left(\overline{S}_{\ell-1} \leq x<S_\ell \right)  - \sum_{\ell=1}^\infty m^{\ell-1} \mathbb{E}_0\left(\psi(u(x-S_\ell)) \ind_{\{\overline{S}_\ell \leq x\}} \right).
		\end{align}
		Recall the change-of measure in \eqref{5hyth6y}, we conclude from the above inequality that
		\begin{align}
			&e^{\gamma x}u(x) =	\frac{1-p_0}{m}
			\sum_{\ell=1}^\infty  \widehat{\mathbb{E}}_{-x}\left(e^{-\gamma S_\ell } \ind_{\{\overline{S}_{\ell-1}\leq 0<S_\ell  \}}\right)  - \frac{1}{m}\sum_{\ell=1}^\infty  \widehat{\mathbb{E}}_{-x}\left(\psi(u(-S_\ell)) e^{-\gamma S_\ell}\ind_{\{\overline{S}_\ell \leq 0\}} \right).
		\end{align}
		Therefore, it follows from
		Lemmas \ref{lem1} and \ref{lem1'} that
		\begin{align}\label{step5}
			& \lim_{x\to +\infty} e^{\gamma x}u(x) \nonumber\\
			&=
			(1-p_0)\frac{1-\widehat{\mathbb{E}}_0[e^{-\gamma \mathcal{H}_1 }]}{m\gamma \widehat{\mathbb{E}}_0[\mathcal{H}_1]}
			-  \frac{1}{m\widehat{\mathbb{E}}_0[S_1]}
			\widehat{\mathbb{E}}_0\left[\int_{-\infty}^{I_\infty} \psi(u(-z))e^{-\gamma z} \mathrm{d}z \right]\cr
			&=
			(1-p_0)\frac{1-\widehat{\mathbb{E}}_0[e^{-\gamma \mathcal{H}_1 }]}{m\gamma \widehat{\mathbb{E}}_0[\mathcal{H}_1]}
			-  \frac{1}{m\widehat{\mathbb{E}}_0[S_1]} \widehat{\mathbb E}_0\left[\int_{-I_{\infty}}^{\infty}  \psi(u(z))e^{\gamma z}\mathrm{d}z\right]
			\in [0,1].
		\end{align}

		\par
		Next, we are going to show that the limit is strictly positive.  Define
		\begin{align}\label{Hdef}
			H(s):= \frac{\psi(s)}{ms},~s\in(0,1]
		\end{align}
		and $H(0):=\lim_{s\to0+}H(s).$
		According to direct calculation, we see that
		\begin{align}
			H(s)&=\frac{1}{ms}\left(ms-\sum^{\infty}_{k=0}(1-(1-s)^k)p_k\right) =\frac{1}{m}\sum^{\infty}_{l=0}\Big(\sum^{\infty}_{k=l+1}p_k\Big)(1-(1-s)^l).
		\end{align}
		Therefore, $H$ is	an increasing function.
		Following the same argument as \cite[Lemma 4.5]{zheng17ptrf}	(with $y=0$),
		we have that for any $x \geq 0$ and any $n\in \mathbb{N}$,
		\begin{align}
			u(x)=\mathbb{E}_{0}\left[m^{T_{x}^+\land n} u\left(x-S_{T_x^+\land n}\right) \prod_{j=1}^{T_x^{+}\land n}\Big(1-H\left(u(x-S_j)\right)\Big)\right],
		\end{align}
		where we define the empty product equals $1$.
		Together with \eqref{5hyth6y}, we conclude from the above equality that
		\begin{align}\label{step36}
			u(x) &=\widehat{\mathbb{E}}_{0}\left[e^{-\gamma S_{T_x^+\land n}}u\left(x-S_{T_x^+\land n}\right) \prod_{j=1}^{T_x^{+}\land n}\Big(1-H\left(u(x-S_j)\right)\Big)\right].
		\end{align}
		Since $0\leq H(s)\leq H(1)=1-\frac{1-p_0}{m}	<1$, we have
		\begin{align}
			e^{-\gamma S_{T_x^+\land n}}u\left(x-S_{T_x^+\land n}\right) \prod_{j=1}^{T_x^{+}\land n}\left(1-H\left(u(x-S_j)\right)\right)& \leq e^{-\gamma S_{T_x^+\land n}}u\left(x-S_{T_x^+\land n}\right)  \leq e^{-\gamma x},
		\end{align}
		where the last inequality follows from \eqref{5tgt4htyh5y} (observe that \eqref{5tgt4htyh5y} holds for $x\leq0$ trivially). Thus, by the dominated convergence theorem, we can take $n\to+\infty$ in \eqref{step36} and get that
		\begin{align}\label{step37}
			e^{\gamma x} u(x)& =e^{\gamma x} \widehat{\mathbb{E}}_{0}\left[e^{-\gamma S_{T_x^+}}u\left(x-S_{T_x^+}\right) \prod_{j=1}^{T_x^{+}}\left(1-H\left(u(x-S_j)\right)\right)\right]\nonumber\\
			& = \widehat{\mathbb{E}}_{-x}\left[e^{-\gamma S_{T_0^+}}u\left(-S_{T_0^+}\right) \prod_{j=1}^{T_0^{+}}\left(1-H\left(u(-S_j)\right)\right)\right]\nonumber\\
			& = \widehat{\mathbb{E}}_{-x}\left[e^{-\gamma S_{T_0^+}} \prod_{j=1}^{T_0^{+}}\left(1-H\left(u(-S_j)\right)\right)\right],
		\end{align}
		where in the last equality we used the fact that $u(z)=1$ for all $z<0$. Combining \eqref{step37}
		and inequality $\log(1-y)\geq {-cy}$ for all $y\in
		[0, H(1)]$
		for some constant $c>0$, it holds that
		\begin{align}\label{step39}
			e^{\gamma x} u(x)&\geq \widehat{\mathbb{E}}_{-x}\left[
			\exp\left\{ -\gamma S_{T_0^+} - c \sum_{j=1}^{T_0^+} H\left(u(-S_j)\right)\right\}
			\right].
		\end{align}
		{Noticing that \eqref{4tt0or4t5q} is still valid with $N$ replaced by $0$}, combining the monotonicity property of $H$ and
		\eqref{5tgt4htyh5y},
		we get that
		\begin{align}\label{step38}
			\widehat{\mathbb{E}}_{-x}  \left[\sum_{j=1}^{T_0^+} H\left(u(-S_j)\right)\right] & \leq H(1)+ \widehat{\mathbb{E}}_{-x}  \left[\sum_{j=1}^{T_0^+-1} H\left(e^{\gamma S_j}\right)\right]\nonumber\\
			& \leq H(1)+ \frac{1}{m}\widehat{\mathbb{E}}_{-x}  \left[\sum_{j=1}^{\infty } { \psi}\left(e^{\gamma S_j}\right)e^{-\gamma S_j}\ind_{\{S_j\leq 0\}}\right]\nonumber\\
			&\leq H(1)+ \frac{1}{m}U(1)e^{\gamma}\sum_{k=0}^{\infty} \psi(e^{-\gamma k})e^{\gamma k},
		\end{align}
		where the second inequality follows from the definition \eqref{Hdef}.
		Combining Corollary \ref{Cor:tightness-overshoot} and \eqref{step38}, we see that
		\begin{align}
			& \limsup_{K\to+\infty} \sup_{x>0} \widehat{\mathbb{P}}_{-x}\left(\gamma S_{T_0^+} + c \sum_{j=1}^{T_0^+} H\left(u(-S_j)\right) >K \right) \nonumber\\
			& \leq \lim_{K\to+\infty} \sup_{x>0} \widehat{\mathbb{P}}_{-x}\left(\gamma S_{T_0^+}>K/2\right) + \limsup_{K\to+\infty} \frac{2c}{K} \sup_{x>0} 	\widehat{\mathbb{E}}_{-x}  \left[\sum_{j=1}^{T_0^+} H\left(u(-S_j)\right)\right]  = 0.
		\end{align}
		Therefore, there exists $K_0>0$ such that
		\[
		\sup_{x>0} \widehat{\mathbb{P}}_{-x}\left(\gamma S_{T_0^+} + c \sum_{j=1}^{T_0^+} H\left(u(-S_j)\right) >K_0 \right)\leq \frac{1}{2}.
		\]
		Plugging this back to \eqref{step39} yields that
		\[
		\lim_{x\to+\infty}e^{\gamma x}u(x)\geq \liminf_{x\to+\infty}e^{-K_0}\widehat{\mathbb{P}}_{-x}\left(\gamma S_{T_0^+} + c \sum_{j=1}^{T_0^+} H\left(u(-S_j)\right) \leq K_0 \right)\geq \frac{1}{2}e^{-K_0}>0,
		\]
		as desired.
	\end{proof}

	\section{Proof of Theorem  \ref{theom3}: Tail probability of $M^{(0,\infty)}$}
	In this section, we shall study the maximal displacement of branching random walk with killing. To simplify the notation, let $u(y,x):=\mathbb{P}_{\delta_y}(M^{(0,\infty)}>x)$ for $y\in(0,x)$. Note that $u(y,x)=1$ for $y\in[x,\infty).$
	\par
	We first give $u(y,x)$ a representation in terms of the random walk $\{S_n\}_{n\geq0}$; see \eqref{step13} below. For $y\in(0,x]$, observe that
	there are two killing mechanisms in the first generation that cause $M^{(0,\infty)}=0$: the first one is that the original particle
	stays in
	$(-\infty, 0]$; the second one is that the original particle jumps into $(0,\infty)$ but $Z_1(\mathbb{R})=0$. Therefore,
	\begin{align}
		1-u(y,x)&=\mathbb{P}_{\delta_y}
		(M^{(0,\infty)}\leq x)\cr
		&=
		\mathbb{P}(y+X\leq 0)+\mathbb{P}(y+X>0)p_0
		+\sum^{\infty}_{n=1}p_n\mathbb{E}\big(\mathbb{P}_{\delta_{y+X}}
		(M^{(0,\infty)}\leq x)^n\ind_{\{y+X\in (0,x]\}}\big)\cr
		&=
		p_0+ (1-p_0)\mathbb{P}(X\leq -y)+ \sum^{\infty}_{n=1}p_n\mathbb{E}\big((1-u(y+X,x))^n\ind_{\{y+X\in(0,x]\}}\big)\cr
		&=
		p_0+(1-p_0)\mathbb{P}(X\leq -y)+ \mathbb{E}\left(\left(\psi(u(y+X,x))+1-p_0-mu(y+X,x) \right) \ind_{\{y+X\in(0,x]\}}\right).
	\end{align}
	Thus, for any $y\in (0,x]$,
	\begin{align}
		&u(y,x) =\cr
		&(1-p_0)\mathbb{P}(X>x-y)
		+ m \mathbb{E}\left(u(y+X,x) \ind_{\{y+X\in(0,x]\}}\right)- \mathbb{E}\left(\psi(u(y+X,x)) \ind_{\{y+X\in(0,x]\}}\right).
	\end{align}
	For any $N\geq 2$, iterating the above equation $N$-times, we get that
	\begin{align}
		u(y,x) & =	(1-p_0) \mathbb{P}_0(S_1>x-y)
		-\mathbb{E}_0\left(\psi(u(y+S_1,x)) \ind_{\{y+S_1\in(0,x]\}}\right)\nonumber\\
		&\qquad +	m(1-p_0) \mathbb{P}_0(y+S_1\in (0, x], y+S_2>x)
		-m
		\mathbb{E}_0\left(\psi(u(y+S_2,x)) \ind_{\{y+S_1, y+S_2\in(0,x]\}}\right) \nonumber\\
		& \qquad +m^2 \mathbb{E}_0\left( u(y+S_2,x)\ind_{\{y+S_1, y+S_2\in(0,x]\}}\right)\nonumber\\
		&=\cdots =	(1-p_0)\sum_{\ell=1}^N
		m^{\ell-1}  \mathbb{P}_0\big(y+S_j \in (0, x], 1\leq j\leq \ell-1 , y+S_\ell >x\big) \nonumber\\
		&\qquad - \sum_{\ell=1}^N  m^{\ell-1}\mathbb{E}_0\left(\psi(u(y+S_\ell,x)) \ind_{\{y+S_j\in(0,x], 1\leq j\leq \ell \}}\right)\nonumber\\
		&\qquad + m^{N} \mathbb{E}_0\left( u(y+S_N,x)\ind_{\{y+S_j \in(0,x], 1\leq j\leq N\}}\right).
	\end{align}
	Noticing that both of $u$ and $\psi(u)$ are bounded, taking $N\to\infty$, we conclude that
	\begin{align}
		u(y,x) & =	(1-p_0)\sum_{\ell=1}^\infty
		m^{\ell-1}  \mathbb{P}_0\Big(y+S_j \in (0, x], 1\leq j\leq \ell-1 , y+S_\ell >x\Big) \nonumber\\
		&\qquad - \sum_{\ell=1}^\infty   m^{\ell-1}\mathbb{E}_0\left(\psi(u(y+S_\ell,x)) \ind_{\{y+S_j\in(0,x], 1\leq j\leq \ell \}}\right).
	\end{align}
	By \eqref{5hyth6y}, the above equation is equivalent to
	\begin{align}\label{step13}
		e^{\gamma(x-y)}u(y,x) & =
		\frac{e^{\gamma(x-y)}(1-p_0)}{m}
		\sum_{\ell=1}^\infty    \widehat{\mathbb{E}}_0( e^{-\gamma S_\ell}\ind_{\{y+S_j \in (0, x], 1\leq j\leq \ell-1 , y+S_\ell >x \}} ) \nonumber\\
		&\qquad - \frac{e^{\gamma(x-y)}}{m}\sum_{\ell=1}^\infty   \widehat{\mathbb{E}}_0\left(\psi(u(y+S_\ell,x)) e^{-\gamma S_\ell}\ind_{\{y+S_j\in(0,x], 1\leq j\leq \ell \}}\right)\nonumber\\
		& =	\frac{1-p_0}{m}
		\sum_{\ell=1}^\infty    \widehat{\mathbb{E}}_{y-x}\left( e^{-\gamma S_\ell}\ind_{\{S_j \in (-x, 0], 1\leq j\leq \ell-1 , S_\ell >0\}} \right) \nonumber\\
		&\qquad - \frac{1}{m}\sum_{\ell=1}^\infty   \widehat{\mathbb{E}}_{y-x}\left(\psi(u(x+S_\ell,x)) e^{-\gamma S_\ell}\ind_{\{S_j\in(-x,0], 1\leq j\leq \ell \}}\right).
	\end{align}
	\par
	From above, one can see that to prove Theorem \ref{theom3}, it suffices to study limit behaviours of the two terms on the r.h.s. of \eqref{step13} as $x\to +\infty$. To this end, the following lemma shows the convergence of the second term.
	
	\begin{lemma}\label{lem3'}
		Assume  $\mathbb{E}[X e^{\gamma X}]<\infty $ and $\sum_{k=1}^\infty k(\log k)p_k<\infty$.
		\begin{itemize}
			\item[(i)]If $X$ is non-lattice,	then for any $y>0$,
			\begin{align}\label{eq6}
				& \lim_{x\to+\infty} \sum_{\ell=1}^\infty   \widehat{\mathbb{E}}_{y-x}\left(\psi(u(x+S_\ell,x)) e^{-\gamma S_\ell}\ind_{\{S_j\in(-x,0], 1\leq j\leq \ell \}}\right) \nonumber\\
				& =\frac{\widehat{\mathbb{P}}_{0}\left(I_\infty > -y \right) }{\widehat{\mathbb{E}}_0[S_1]}
				\widehat{\mathbb{E}}_0\left[ \int_{-\infty}^{I_\infty}  \psi(u(-z))e^{-\gamma z}\mathrm{d}z \right].
			\end{align}
			\item[(ii)] 	If $X$ is lattice with span $h>0$,
			then for any $i\geq1$,
			\begin{align}\label{eq6'}
				& \lim_{k\to+\infty} \sum_{\ell=1}^\infty   \widehat{\mathbb{E}}_{(i-k)h}\left(\psi(u(kh+S_\ell,kh)) e^{-\gamma S_\ell}\ind_{\{S_j\in(-kh,0], 1\leq j\leq \ell \}}\right) \nonumber\\
				& =\frac{h \widehat{\mathbb{P}}_{0}\left(I_\infty>
					-ih	\right) }{\widehat{\mathbb{E}}_0[S_1]}
				\widehat{\mathbb{E}}_0\left[ \sum_{j=-\infty}^{I_\infty/h}  \psi(u(-jh))e^{\gamma jh} \right] .
			\end{align}
		\end{itemize}
	\end{lemma}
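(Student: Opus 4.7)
The plan is to adapt the proof of Lemma \ref{lem3}, merging it with the truncation argument used in Lemma \ref{lem1'}. I will focus on part (i); part (ii) follows by parallel reasoning, invoking \eqref{eq5'} in place of \eqref{eq5}. Denote the sum in question by $A(x)$, and for each $N\geq 1$ write $A(x) = A_N^{(1)}(x) + A_N^{(2)}(x)$ where $A_N^{(1)}$ collects $\ell = 1, \ldots, N$ and $A_N^{(2)}$ collects $\ell \geq N+1$.

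For $A_N^{(1)}$, the crucial domination is $u(x+s, x) \leq u(-s) \leq e^{\gamma s}$ for $s \leq 0$ (the first inequality because the killed BRW is dominated by the unkilled one, the second by the martingale bound \eqref{5tgt4htyh5y}). Setting $\phi(t) := \psi(t)/t$, which is bounded on $[0,1]$ with $\phi(0+) = 0$ because $\psi'(0) = 0$, each summand in $A_N^{(1)}$ is bounded by $\phi(e^{\gamma S_\ell})$. Since under $\widehat{\mathbb{P}}_{y-x}$ we have $S_0 = y-x \to -\infty$ and hence $S_\ell \to -\infty$ in probability for each fixed $\ell$, dominated convergence gives $\lim_{x\to\infty} A_N^{(1)}(x) = 0$ for every $N$.

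For $A_N^{(2)}$, I would mirror the approximation $R_{2,N} \approx R_{3,N} \approx R_{4,N}$ from Lemma \ref{lem3}, replacing the constraint $\{S_j \in (-x,0], 1\leq j\leq \ell\}$ first by $\{\underline{S}_N^{1,N} > -x,\ \overline{S}_\ell \leq 0\}$ and then by $\{\underline{S}_N^{1,N} > -x,\ \max_{N+1\leq j\leq \ell} S_j \leq 0\}$. The errors are bounded, via the Markov property and Lemma \ref{4rtgt5gt}(i) together with Lemma \ref{43tt67i8de3}, by a constant multiple of $\widehat{\mathbb{P}}_y(T_0^- \in (N,\infty))$, which vanishes as $N \to \infty$. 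After applying the Markov property at step $N$ and the coordinate shift $\widehat{\mathbb{P}}_{y-x} \leftrightarrow \widehat{\mathbb{P}}_y$ (with $S_N \leftrightarrow S_N - x$), the leading approximation is
\[
\widehat{\mathbb{E}}_y\!\left[\ind_{\{\underline{S}_N^{1,N} > 0\}} G(S_N - x, x)\right], \qquad G(z, x) := \sum_{k=1}^\infty \widehat{\mathbb{E}}_z\!\left[\psi(u(x + S_k, x)) e^{-\gamma S_k} \ind_{\{\overline{S}_k \leq 0\}}\right].
\]

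It remains to show, as $x \to \infty$ with $S_N$ bounded on $\{\underline{S}_N^{1,N} > 0\}$ (so $z = S_N - x \to -\infty$), that $G(z, x)$ converges to $\frac{1}{\widehat{\mathbb{E}}_0[S_1]} \widehat{\mathbb{E}}_0\bigl[\int_{-\infty}^{I_\infty} \psi(u(-z')) e^{-\gamma z'}\, dz'\bigr]$. The strategy is to compare $G(z, x)$ with $\widetilde{G}(z) := \sum_{k\geq 1} \widehat{\mathbb{E}}_z[\psi(u(-S_k)) e^{-\gamma S_k} \ind_{\{\overline{S}_k \leq 0\}}]$, whose limit as $z \to -\infty$ is given by Lemma \ref{lem1'}(i) with $f = u$. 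The comparison $u(x+s, x) \leq u(-s)$ together with the pointwise monotone convergence $u(x+s, x) \uparrow u(-s)$ (as $x \to \infty$, the killing barrier $-(x+s)$ becomes irrelevant) allows the error $|G(z, x) - \widetilde{G}(z)|$ to be controlled by splitting the sum at $\{S_k \geq -L\}$ versus $\{S_k < -L\}$: the tail $|S_k| > L$ is handled uniformly by Lemma \ref{43tt67i8de3}, while the compact part uses pointwise convergence along with the uniform renewal bound from Lemma \ref{4rtgt5gt}(i). Combined with $\widehat{\mathbb{P}}_y(\underline{S}_N^{1,N} > 0) \to \widehat{\mathbb{P}}_y(I_\infty > 0) = \widehat{\mathbb{P}}_0(I_\infty > -y)$ as $N \to \infty$, this yields \eqref{eq6}. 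The main obstacle will be verifying the uniformity in the substitution $u(x+S_k, x) \leadsto u(-S_k)$ across the infinite sum; the summability bound $\sum_k \psi(e^{-\gamma k}) e^{\gamma k} < \infty$ from Lemma \ref{43tt67i8de3} is exactly what makes the truncation argument viable.
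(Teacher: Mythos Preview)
Your proposal is correct and follows essentially the same route as the paper: the same split into $Q_{1,N}+Q_{2,N}$, the same approximation chain $Q_{2,N}\to Q_{3,N}\to Q_{4,N}$ with errors controlled by $\widehat{\mathbb{P}}_y(T_0^-\in(N,\infty))$, Markov property at time $N$, and Lemma~\ref{lem1'} for the limit. The only notable difference is in the last step: where you propose bounding $|G(z,x)-\widetilde G(z)|$ via truncation at level $L$, the paper instead sandwiches $u(x+S_\ell,x)$ between $u(K+S_\ell,K)$ (fixed $K<x$, by monotonicity in $x$) and $u(-S_\ell)$, applies Lemma~\ref{lem1'} separately to each $x$-independent bound, and then lets $K\to\infty$ via Fatou --- this cleanly sidesteps the uniformity issue you flag, since the test function fed to Lemma~\ref{lem1'} never depends on $x$.
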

	\begin{proof}
		We only prove \eqref{eq6} here since the proof of \eqref{eq6'} is similar. For $N\geq1$, define
		\begin{align}
			Q_{1,N}(x)& := \sum_{\ell=1}^N    \widehat{\mathbb{E}}_{y-x}\left(\psi(u(x+S_\ell,x)) e^{-\gamma S_\ell}\ind_{\{S_j\in(-x,0], 1\leq j\leq \ell \}}\right),\nonumber\\
			Q_{2,N}(x)&:=  \sum_{\ell=N+1}^\infty     \widehat{\mathbb{E}}_{y-x}\left(\psi(u(x+S_\ell,x)) e^{-\gamma S_\ell}\ind_{\{S_j\in(-x,0], 1\leq j\leq \ell \}}\right).
		\end{align}
		Then,
		\begin{align}\label{step32}
			\sum_{\ell=1}^\infty   \widehat{\mathbb{E}}_{y-x}\left(\psi(u(x+S_\ell,x)) e^{-\gamma S_\ell}\ind_{\{S_j\in(-x,0], 1\leq j\leq \ell \}}\right) = Q_{1,N}(x)+ Q_{2,N}(x).
		\end{align}
		For $Q_{1,N}(x)$, combining
		\eqref{5tgt4htyh5y},
		\begin{align}\label{4tyhy7jude3}
			u(y,x)=\mathbb{P}_{\delta_y}\left(M^{(0,\infty)}>x\right)\leq \mathbb{P}_{\delta_y}\left(M>x\right)= u(x-y)
		\end{align}
		and that $\psi$ is increasing, we see that for any
		$z\leq 0$,
		\begin{align}\label{bounds-for-u}
			\psi(u(x+z, x))		e^{-\gamma z}
			\leq \psi(u(-z))e^{-\gamma z}\leq\psi(e^{\gamma z}) e^{-\gamma z} \leq \psi(e^{\gamma (\lfloor z\rfloor +1)}) e^{-\gamma \lfloor z\rfloor}.
		\end{align}
		By Lemma \ref{43tt67i8de3}, above yields that there exists some constant $K>0$ such that for all  $x>0\geq z$,
		$$\psi(u(x+z, x))e^{-\gamma z}\leq K.$$
		Therefore, for each $N\geq 2$,
		\begin{align}\label{step20'}
			&  \limsup_{x\to+\infty} Q_{1,N}(x)=
			\limsup_{x\to+\infty} \sum_{\ell=1}^N    \widehat{\mathbb{E}}_{y-x}\left[\psi(u(x+S_\ell,x)) e^{-\gamma S_\ell}\ind_{\{S_j\in(-x,0], 1\leq j\leq \ell \}}\right]\nonumber\\
			& \leq \limsup_{x\to+\infty} \sum_{\ell=1}^\infty    \widehat{\mathbb{E}}_{y-x}\left[\psi(e^{\gamma S_\ell})e^{-\gamma S_\ell}\ind_{\{S_\ell  \leq -\lfloor x/2 \}}\right] +K \lim_{x\to+\infty} \sum_{\ell=1}^N    \widehat{\mathbb{P}}_{y-x}\left( S_\ell  >-\lfloor x/2\rfloor  \right)  \nonumber\\
			&= \limsup_{x\to+\infty}  \sum_{\ell=1}^\infty    \widehat{\mathbb{E}}_{y-x}\left[\psi(e^{\gamma S_\ell})e^{-\gamma S_\ell}\ind_{\{S_\ell  \leq -\lfloor x/2\rfloor \}}\right] ,
		\end{align}
		where the last equality follows from
		$$\lim_{x\to+\infty}  \widehat{\mathbb{P}}_{y-x}\left( S_\ell  >-\lfloor x/2\rfloor  \right) =\lim_{x\to+\infty}  \widehat{\mathbb{P}}_{0}\left(S_\ell  >-y+x- \lfloor x/2\rfloor\right)  = 0$$
		for all $1\leq \ell \leq N$.
		Taking $N= \lfloor x/2 \rfloor$ in \eqref{4tt0or4t5q}
		and plugging this into \eqref{step20'}, we obtain that
		\begin{align}\label{step20}
			\limsup_{x\to+\infty} Q_{1,N}(x)
			& \leq \lim_{x\to+\infty} U(1)e^{\gamma }\sum_{n=\lfloor x/2 \rfloor}^\infty \psi(e^{-\gamma n})e^{\gamma n} =0.
		\end{align}

			Now we treat $Q_{2, N}(x)$.
			We define
			\begin{align}
				Q_{3,N}(x)&:= \sum_{\ell=N+1}^\infty
				\widehat{\mathbb{E}}_{y-x}\Big(  \psi(u(x+S_\ell,x)) e^{-\gamma S_\ell} \ind_{\{ \underline{S}_N > -x, \overline{S}_\ell  \leq 0 \}} \Big) \nonumber\\
				Q_{4,N}(x)&:=   \sum_{\ell=N+1}^\infty    \widehat{\mathbb{E}}_{y-x}\Big(  \psi(u(x+S_\ell,x)) e^{-\gamma S_\ell}  \ind_{\{ \underline{S}_\ell > -x, \max_{N+1\leq  j\leq \ell} S_j \leq  0 \}} \Big).
			\end{align}
			Similar to \eqref{step14}, combining Lemma \ref{43tt67i8de3} and \eqref{bounds-for-u}, we have that
			\begin{align}\label{step21}
				& |Q_{2,N}(x)- Q_{3,N}(x)|\nonumber\\
				& \leq  \sum_{\ell=N+1}^\infty     \widehat{\mathbb{E}}_{y-x}\left( \psi(e^{\gamma S_\ell}) e^{-\gamma S_\ell} \ind_{\{ T_{-x}^-\in [N+1, \ell], S_\ell \leq 0 \}} \right) \nonumber\\
				&\leq  \sum_{k=1}^\infty  \psi(e^{-\gamma (k-1)})e^{\gamma k}\sum_{\ell=N+1}^\infty     \widehat{\mathbb{E}}_{y-x}\left(  T_{-x}^-\in [N+1, \ell], S_\ell \in (-k, 1-k] \right) \nonumber\\
				& \leq  U(1)e^{\gamma} \sum_{k=0}^\infty  \psi(e^{-\gamma k})e^{\gamma k} \widehat{\mathbb{P}}_y(T_0^- \in (N,+\infty)).
			\end{align}
			Moreover, repeating the same argument as that leading to \eqref{step15}, we also get that
			\begin{align}\label{step22}
				& |Q_{3,N}(x)-Q_{4,N}(x)| \nonumber\\
				& \leq  \sum_{\ell=N+1}^\infty  \widehat{\mathbb{E}}_{y-x} \left(\psi(e^{\gamma S_\ell})e^{-\gamma S_\ell} \ind_{\{ \overline{S}_N >0, S_\ell <0 \}}\right)\nonumber\\
				& \leq  \sum_{k=1}^\infty \psi(e^{-\gamma (k-1)})e^{\gamma k} \widehat{\mathbb{E}}_{y-x} \left( \ind_{\{ \overline{S}_N >0 \}}  \sum_{\ell=1}^\infty \widehat{\mathbb{P}}_{z}( S_\ell \in (-k, 1-k])\big|_{z=S_N} \right) \nonumber\\
				&\leq U(1)e^\gamma  \sum_{k=0}^\infty \psi(e^{-\gamma k})e^{\gamma k}  \widehat{\mathbb{P}}_{y-x}(\overline{S}_N  >0).
			\end{align}
			Combining \eqref{step21} and \eqref{step22}, we conclude that
			\begin{align}\label{step23}
				\limsup_{x\to+\infty} |Q_{2,N}(x)-Q_{4,N}(x)|
				& \leq
				U(1)e^{\gamma} \sum_{k=0}^\infty  \psi(e^{-\gamma k})e^{\gamma k} \widehat{\mathbb{P}}_y(T_0^- \in (N,+\infty)).
			\end{align}
			Finally, since $u(y,x)\leq u(x-y)$ and that $\psi$ is increasing, according to the Markov property,
			\begin{align}
				&\limsup_{x\to+\infty} Q_{4,N}(x)=
				\limsup_{x\to+\infty}\sum_{\ell=N+1}^\infty \widehat{\mathbb{E}}_{y-x}\Big(  \psi(u(x+S_\ell,x)) e^{-\gamma S_\ell}  \ind_{\{ \underline{S}_N > -x, \max_{N+1\leq  j\leq \ell} S_j  \leq  0 \}} \Big) \nonumber\\
				& \leq \limsup_{x\to+\infty}\sum_{\ell=N+1}^\infty \widehat{\mathbb{E}}_{y-x}\Big(  \psi(u(-S_\ell)) e^{-\gamma S_\ell}  \ind_{\{ \underline{S}_N > -x, \max_{N+1\leq  j\leq \ell} S_j \leq  0 \}}
				\Big) \nonumber\\
				& = \lim_{x\to+\infty} \widehat{\mathbb{E}}_{y}\left( \ind_{\{ \underline{S}_N > 0 \}}  \sum_{\ell=1}^\infty
				\widehat{\mathbb{E}}_{z-x} (\psi(u(-S_\ell)) e^{-\gamma S_\ell}  \ind_{\{  \max_{1\leq  j\leq \ell} S_j \leq  0 \}})\big|_{z=S_N} \right).
			\end{align}
			Noticing that for each $z>0$,
			\begin{align}
				\lim_{x\to+\infty}  \sum_{\ell=1}^\infty
				\widehat{\mathbb{E}}_{z-x}
				\Big(\psi(u(-S_\ell)) e^{-\gamma S_\ell}  \ind_{\{  \max_{1\leq  j\leq \ell} S_j \leq  0 \}}\Big)  = 	\lim_{x\to+\infty}  \sum_{\ell=1}^\infty
				\widehat{\mathbb{E}}_{z-x}
				\Big(\psi(u(-S_\ell)) e^{-\gamma S_\ell}  \ind_{\{  \overline{S}_{\ell} \leq  0 \}}\Big)  .
			\end{align}
			Moreover, by	\eqref{4tt0or4t5q},
			$ \sum_{\ell=1}^\infty
			\widehat{\mathbb{E}}_{z-x}
			\Big(\psi(u(-S_\ell)) e^{-\gamma S_\ell}  \ind_{\{  \max_{1\leq  j\leq \ell} S_j \leq  0 \}}\Big) $ is
			a bounded function.
			Therefore, combining Lemma \ref{lem1'} and the  dominated convergence theorem, we obtain that
			\begin{align}\label{step24}
				& \limsup_{x\to+\infty} Q_{4,N}(x)\nonumber\\
				& \leq  \widehat{\mathbb{E}}_{y}\left( \ind_{\{ \underline{S}_N > 0 \}}  \lim_{x\to+\infty} \sum_{\ell=1}^\infty \widehat{\mathbb{E}}_{S_N-x} \Big(\psi(u(-S_\ell)) e^{-\gamma S_\ell}  \ind_{\{  \overline{S}_\ell   \leq  0 \}}\Big) \right)\nonumber\\
				& =  \frac{\widehat{\mathbb{P}}_{y}\left(
					\underline{S}_N
					> 0\right)}{\widehat{\mathbb{E}}_0[S_1]}
				\widehat{\mathbb{E}}_0\left[\int_{-\infty}^{I_\infty}\psi(u(-z))e^{-\gamma z} \mathrm{d}z \right].
			\end{align}
			Also for the lower bound, noticing that $u(x+y, x)$ is increasing in $x$, for any $K>0$, it holds that
			\begin{align}\label{step25'}
				&\liminf_{x\to+\infty} Q_{4,N}(x)=
				\liminf_{x\to+\infty}\sum_{\ell=N+1}^\infty \widehat{\mathbb{E}}_{y-x}\Big(  \psi(u(x+S_\ell,x)) e^{-\gamma S_\ell}  \ind_{\{ \underline{S}_N > -x, \max_{N+1\leq  j\leq \ell} S_\ell \leq  0 \}} \Big) \nonumber\\
				& \geq \liminf_{x\to+\infty}\sum_{\ell=N+1}^\infty \widehat{\mathbb{E}}_{y-x}\Big(  \psi(u(K+S_\ell,K)) e^{-\gamma S_\ell}  \ind_{\{ \underline{S}_N > -x, \max_{N+1\leq  j\leq \ell} S_\ell \leq  0 \}} \Big) \nonumber\\
				& = \lim_{x\to+\infty} \widehat{\mathbb{E}}_{y}\left( \ind_{\{ \underline{S}_N> 0 \}}  \sum_{\ell =1}^\infty
				\widehat{\mathbb{E}}_{z-x} \Big(\psi(u(K+S_\ell, K)) e^{-\gamma S_\ell}  \ind_{\{  \max_{N+1\leq  j\leq \ell} S_j \leq  0 \}}\Big)\big|_{z=S_N} \right)\nonumber\\
				& = \frac{\widehat{\mathbb{P}}_{y}\left(\underline{S}_N > 0\right)}{\widehat{\mathbb{E}}_0[S_1]}
				\widehat{\mathbb{E}}_0\left[ \int_{-\infty}^{I_\infty}   \psi(u(K+z,K))e^{-\gamma z}\mathrm{d}z\right],
			\end{align}
			where in the last equality we used Lemma \ref{lem1'}. Recall that $\mathbb{T}$ is the branching process with offspring distribution $\{p_k\}_{k\geq0}$.
			For $K,~z>0$, by \eqref{4tyhy7jude3} and
			\begin{align}
				u(K+z, K)& \geq  \mathbb{P}_{\delta_{K+z}}\left(M^{(0,\infty)}> K, \min_{\omega\in \mathbb{T}} V(\omega)>0 \right)= \mathbb{P}_{\delta_{K+z}}\left(M> K, \min_{\omega\in \mathbb{T}} V(\omega)>0 \right)\nonumber\\
				&\geq \mathbb{P}_{\delta_{K+z}}\left(M> K \right)- \mathbb{P}_{\delta_{K+z}}\left( \min_{\omega\in \mathbb{T}} V(\omega)\leq 0 \right)= u(-z)- \mathbb{P}\left( \min_{\omega\in \mathbb{T}} V(\omega)\leq -K-z \right),
			\end{align}
			we have $\lim_{K\to +\infty} u(K+z,K)=u(-z)$.
			Therefore, by Fatou's lemma,
			the left hand side of \eqref{step25'} has lower bound
			\begin{align}\label{step25}
				&
				\liminf_{x\to+\infty} Q_{4,N}(x)\nonumber\\
				&\geq \lim_{K\to +\infty} \frac{\widehat{\mathbb{P}}_{y}\left(\underline{S}_N > 0\right)}{\widehat{\mathbb{E}}_0[S_1]}
				\widehat{\mathbb{E}}_0\left[ \int_{-\infty}^{I_\infty} \psi(u(K+z,K))e^{-\gamma z}\mathrm{d}z\right]\nonumber\\
				& \geq  \frac{\widehat{\mathbb{P}}_{0}\left(\underline{S}_N  > -y\right)}{\widehat{\mathbb{E}}_0[S_1]}
				\widehat{\mathbb{E}}_0\left[\int_{-\infty}^{I_\infty} \psi(u(-z))e^{-\gamma z}\mathrm{d}z \right].
			\end{align}
			Together with \eqref{step24} and \eqref{step25}, we conclude that
			\begin{align}\label{step35}
				\lim_{x\to +\infty} Q_{4,N}(x)
				& =  \frac{\widehat{\mathbb{P}}_{y}\left(
					\underline{S}_N
					> 0\right)}{\widehat{\mathbb{E}}_0[S_1]}
				\widehat{\mathbb{E}}_0\left[\int_{-\infty}^{I_\infty} \psi(u(-z))e^{-\gamma z}\mathrm{d}z \right].
			\end{align}
			Combining the arguments above, we conclude that
			\begin{align}
				& \lim_{x\to+\infty} \sum_{\ell=1}^\infty   \widehat{\mathbb{E}}_{y-x}\left(\psi(u(x+S_\ell,x)) e^{-\gamma S_\ell}\ind_{\{S_j\in(-x,0], 1\leq j\leq \ell \}}\right)
				\stackrel{\eqref{step32}}{=}\lim_{x\to+\infty} \left(Q_{1,N}(x)+Q_{2,N}(x)\right)\nonumber\\
				&\stackrel{\eqref{step20}}{=} \lim_{N\to+\infty} \lim_{x\to+\infty}
				Q_{2,N}(x) \stackrel{\eqref{step23}}{=}\lim_{N\to+\infty} \lim_{x\to+\infty} 	Q_{4,N}(x) \nonumber\\
				& \stackrel{\eqref{step35}}{=}
				\frac{\widehat{\mathbb{P}}_{0}\left( I_\infty > -y\right)}{\widehat{\mathbb{E}}_0[S_1]}
				\widehat{\mathbb{E}}_0\left[\int_{-\infty}^{I_\infty} \psi(u(-z))e^{-\gamma z}\mathrm{d}z \right],
			\end{align}
			which completes the proof of 	\eqref{eq6}.
		\end{proof}

		Now we are ready to prove Theorem \ref{theom3}. Without loss of generality, here we assume the step size is non-lattice.
		\begin{proof}[Proof of Theorem \ref{theom3}]
			Combining Lemma \ref{lem3} and Lemma \ref{lem3'}, we deduce that
			\begin{align}
				\lim_{x\to+\infty} e^{\gamma(x-y)} \mathbb{P}_{\delta_y}(M>x)&=
				\lim_{x\to+\infty} e^{\gamma(x-y)} u(y,x)\nonumber\\
				& \stackrel{\eqref{step13}}{=}	\lim_{x\to+\infty}
				\frac{1-p_0}{m}
				\sum_{\ell=1}^\infty    \widehat{\mathbb{E}}_{y-x}\Big( e^{-\gamma S_\ell}\ind_{\{S_j \in (-x, 0], 1\leq j\leq \ell-1 , S_\ell >0\}} \Big) \nonumber\\
				&\qquad -	\lim_{x\to+\infty} \frac{1}{m}\sum_{\ell=1}^\infty   \widehat{\mathbb{E}}_{y-x}\left[\psi(u(x+S_\ell,x)) e^{-\gamma S_\ell}\ind_{\{S_j\in(-x,0], 1\leq j\leq \ell \}}\right]\nonumber\\
				& = \widehat{\mathbb{P}}_{0}\Big(I_{\infty} > -y\Big) \lim_{x\to+\infty} e^{\gamma x} u(x)		,
			\end{align}
			where the last equality follows from Theorem \ref{456hy5t1}.
		\end{proof}
		
		\section{Proof of Theorems \ref{54th6uyju5t} and \ref{theom4}: Tail probabilities of $M_n$,~$M^{(0,\infty)}_n$ }
		We first prove Theorem \ref{54th6uyju5t}. Recall that $M_n:=\max_{|\omega|=i,0\leq i\leq n}V(\omega)~\text{for}~n\geq0.$ To prove the theorem, we divide proofs into two cases according to $c>\widehat{\mathbb{E}}_0[S_1]$ and $c<\widehat{\mathbb{E}}_0[S_1]$.
		
		\noindent
		\begin{proof}[Proof of Theorem \ref{54th6uyju5t}.] \textbf{Case 1:~$c>\widehat{\mathbb{E}}_0[S_1]$.}
			Set $x_+=\max\{0,x\}.$ For $n\geq0$, define
			\begin{align}\label{Def-of-D-n}
				D_n^+:= \sum_{|u|=n}\left(V(u)- n\widehat{\mathbb{E}}_0[S_1]\right)_+ e^{\gamma V(u)}.
			\end{align}
			Recall that $\mathcal{G}_n,n\geq0$ is the natural filtration of the branching random walk. By the Markov property and many-to-one formula \eqref{many-to-one-formula},
			\begin{align}
				\mathbb{E}\left(D_{n+1}^+ \Big| \mathcal{G}_n \right) &= \sum_{|u|=n} \mathbb{E}_{\delta_{V(u)}}\Bigg[ \sum_{|w|=1} \left(V(w)-(n+1)\widehat{\mathbb{E}}_0[S_1] \right)_+ e^{\gamma V(w)}\Bigg]\nonumber\\
				&\geq \sum_{|u|=n} \left(\mathbb{E}_{\delta_{V(u)}}\Bigg[ \sum_{|w|=1} \left(V(w)-(n+1)\widehat{\mathbb{E}}_0[S_1] \right)  e^{\gamma V(w)}\Bigg]\right)_+\nonumber\\
				&  = \sum_{|u|=n} e^{\gamma V(u)}\Big( \widehat{\mathbb{E}}_{0}\left[  S_1+z-(n+1)\widehat{\mathbb{E}}_0[S_1]  \right]\Big)_+\bigg|_{z=V(u)} \nonumber\\
				& = D_n^+.
			\end{align}
			Therefore, $\{D_n^+,\mathcal{G}_n, n\geq 0, \mathbb{P} \}$ is a non-negative submartingale. Now according to Doob's inequality, for $c>\widehat{\mathbb{E}}_0[S_1]$,
			\begin{align}\label{step46}
				e^{\gamma cn}\mathbb{P}(M_n\geq cn) & = 	e^{\gamma cn}\mathbb{P}\Big(\max_{s\leq n} \max_{|u|=s} V(u)\geq cn\Big) \nonumber\\
				& \leq e^{\gamma cn}\mathbb{P}\left(\max_{s\leq n} \max_{|u|=s} (V(u)- s\widehat{\mathbb{E}}_0[S_1] ) e^{\gamma V(u)}\geq (c-\widehat{\mathbb{E}}_0[S_1])ne^{\gamma cn}\right) \nonumber\\
				&\leq e^{\gamma cn}\mathbb{P} \left(\max_{s\leq n} D_s^+ \geq(c-\widehat{\mathbb{E}}_0[S_1])ne^{\gamma cn} \right) \leq \frac{1}{(c-\widehat{\mathbb{E}}_0[S_1])n} \mathbb{E}\left(D_n^+\right).
			\end{align}
			By many-to-one formula \eqref{many-to-one-formula}, we conclude that
			\begin{align}\label{4frfrfgttr4}
				e^{\gamma cn}\mathbb{P}(M_n\geq cn)
				\leq \frac{1}{(c-\widehat{\mathbb{E}}_0[S_1])n} \widehat{\mathbb{E}}_0 \left((S_n- \widehat{\mathbb{E}}_0[S_1] n)_+\right).
			\end{align}
			According to the ergodic theorem (see the last remark of \cite[p.285]{durrett10}), $\frac{S_n-\widehat{\mathbb{E}}_0[S_1]n}{n}$ converges in $L^1(\widehat{\mathbb{P}}_0)$ to $0$ as $n\to\infty$. This, combined with \eqref{4frfrfgttr4}, yields that for $c>\widehat{\mathbb{E}}_0[S_1]$,
			\begin{align}\label{4rgt5fr4}
				\lim_{n\to\infty}e^{\gamma cn}\mathbb{P}(M_n\geq cn)=0.
			\end{align}
			\par
			If the step size is lattice with span $h$, then by Theorem \ref{456hy5t1} (ii), we have for any $c>0$,
			\begin{align}
				\liminf_{n\to\infty}e^{\gamma cn}\mathbb{P}(M\geq cn)&\geq \lim\inf_{n\to\infty}e^{\gamma cn-\gamma h\lceil cn/h\rceil}e^{\gamma h\lceil cn/h\rceil}\mathbb{P}(M\geq h\lceil cn/h\rceil)\cr
				&\geq e^{-\gamma h} \lim_{n\to\infty}e^{\gamma h\lceil cn/h\rceil}\mathbb{P}(M\geq h\lceil cn/h\rceil)>0,
			\end{align}
			where $\lceil x\rceil$ stands for the smallest integer not less than $x$. This, combined with Theorem \ref{456hy5t1} (i), yields that whether the step size is lattice or not, we always have
			\begin{align}\label{4gh6tt5f}
				\liminf_{n\to\infty}e^{\gamma cn}\mathbb{P}(M\geq cn)>0.
			\end{align}
			Above, combined with \eqref{4rgt5fr4}, entails that for $c>\widehat{\mathbb{E}}_0[S_1]$,
			\begin{align}\label{4gh6y6y55k}
				\lim_{n\to\infty}\mathbb{P}(M_n\geq cn|M>cn)=\lim_{n\to\infty}\frac{e^{\gamma cn}\mathbb{P}(M_n\geq cn)}{e^{\gamma cn}\mathbb{P}(M\geq cn)}=0.
			\end{align}
			\par
			\textbf{Case 2:~$c<\widehat{\mathbb{E}}_0[S_1]$.} Since $M_n\leq M$, we have
			\begin{align}\label{45ty6yhju2}
				e^{\gamma cn}\mathbb{P}(M\geq cn)&=e^{\gamma cn}\mathbb{P}(M\geq cn, M_n\geq cn)+e^{\gamma cn}\mathbb{P}(M\geq cn,M_n< cn)\cr
				&=e^{\gamma cn}\mathbb{P}(M_n\geq cn)+e^{\gamma cn}\mathbb{P}(M\geq cn,M_n< cn).
			\end{align}
			By the Markov property of the branching random walk, it follows that
			\begin{align}
				e^{\gamma cn}\mathbb{P}(M\geq cn,M_n< cn)&=e^{\gamma cn}\mathbb{E}(\mathbb{P}_{Z_n}(M\geq cn)
				\ind_{\{Z_{n}(\mathbb{R})>0, M_n< cn\}})\cr
				&\leq e^{\gamma cn}\mathbb{E}\left(
				\sum_{|\omega|=n}
				\mathbb{P}_{\delta_{V(\omega)}}(M>cn-1)\ind_{\{M_n< cn,Z_{n}(\mathbb{R})>0\}}\right)\cr
				&\leq e^{\gamma cn}\mathbb{E}\Bigg(\sum_{|\omega|=n}e^{-\gamma(cn-V(\omega)-1)}\ind_{\{M_n< cn\}}\Bigg)\cr
				&\leq e^{\gamma} e^{\gamma cn}\mathbb{E}\Bigg(\sum_{|\omega|=n}e^{-\gamma(cn-V(\omega))}\ind_{\{V(\omega)< cn\}}\Bigg),
			\end{align}
			where the second inequality follows from from \eqref{5tgt4htyh5y}.
			Combining \eqref{5hyth6y} and the many-to-one formula \eqref{many-to-one-formula},
			we conclude from the above inequality that
			\begin{align}
				e^{\gamma cn}\mathbb{P}(M\geq cn,M_n< cn)& \leq e^{\gamma}  m^n\mathbb{E}_0\left(e^{\gamma S_n}\ind_{\{S_n< cn\}}\right)\cr
				&=e^{\gamma}  \widehat{\mathbb{P}}_0(S_n<cn).
			\end{align}
			Since $c<\widehat{\mathbb{E}}_0[S_1]$, by the strong law of large numbers, we have $\lim_{n\to\infty}		\widehat{\mathbb{P}}_0
			(S_n<cn)=0.$ Thus,
			\begin{align}\label{566yu7ide3}
				\lim_{n\to\infty}e^{\gamma cn}\mathbb{P}(M\geq cn,M_n< cn)=0.
			\end{align}
			This, combined with \eqref{45ty6yhju2}, yields that
			\begin{align}
				\lim_{n\to\infty}e^{\gamma cn}\mathbb{P}(M_n\geq cn)-e^{\gamma cn}\mathbb{P}(M\geq cn)=0.
			\end{align}
			Since $M_n\leq M$, above yields that for $c\in(0,\widehat{\mathbb{E}}_0[S_1])$,
			\begin{align}\label{4ty6hy7hy6}
				\lim_{n\to\infty}\mathbb{P}(M_n\geq cn|M>cn)&=\lim_{n\to\infty}\frac{e^{\gamma cn}\mathbb{P}(M_n\geq cn)}{e^{\gamma cn}\mathbb{P}(M\geq cn)}\cr
				&=1+\lim_{n\to\infty}\frac{e^{\gamma cn}\mathbb{P}(M_n\geq cn)-e^{\gamma cn}\mathbb{P}(M\geq cn)}{e^{\gamma cn}\mathbb{P}(M\geq cn)}=1,
			\end{align}
			where in the last equality we used \eqref{4gh6tt5f}. This completes the proof of the theorem.
		\end{proof}
		\par
		Now we are going to prove Theorem \ref{theom4}. Recall that $M_n^{(0,\infty)}:=\max_{|\omega|=i,0\leq i\leq n, \forall u\preceq\omega , V(u)>0 }V(\omega)$ is the maximum of the branching random walk with killing up to time $n$. Without loss of generality, we only consider the non-lattice case.

		\begin{proof}[Proof of Theorem \ref{theom4}.] \textbf{Case 1:~$c>\widehat{\mathbb{E}}_0[S_1]$.}
			For $c>\widehat{\mathbb{E}}_0[S_1]$, similar to \eqref{step46}, using
			the fact that $M_n^{(0,\infty)}\leq M_n$, we have for $y>0$,
			\begin{align}
				&\lim_{n\to\infty}e^{\gamma cn}\mathbb{P}_{\delta_y}(M_n^{(0,\infty)}\geq cn) \leq \lim_{n\to\infty}e^{\gamma cn}\mathbb{P}_{\delta_y}(M_n\geq cn)= \lim_{n\to\infty}e^{\gamma cn}\mathbb{P}_{\delta_0}(y+M_n\geq cn)\cr
				&\leq \lim_{n\to\infty}
				e^{\gamma cn}\mathbb{P} \left(\max_{s\leq n} D_s^+ \geq
				\big((c-\widehat{\mathbb{E}}_0[S_1])n-y\big)e^{\gamma (cn-y)} \right)
				\leq  \lim_{n\to\infty} \frac{e^{\gamma y}}{(c-\widehat{\mathbb{E}}_0[S_1])n-y} \mathbb{E}\left(D_n^+\right)=0.\nonumber
			\end{align}
			where the last equality follows from the many-to-one formula \eqref{many-to-one-formula} and the ergodic theorem.
			Therefore, similar
			to \eqref{4gh6y6y55k}, above yields that for $c>\widehat{\mathbb{E}}_0[S_1]$,
			\begin{align}\label{4gh4rk}
				\lim_{n\to\infty}\mathbb{P}_{\delta_y}(M^{(0,\infty)}_n\geq cn|M^{(0,\infty)}>cn)=\lim_{n\to\infty}\frac{e^{\gamma cn}\mathbb{P}_{\delta_y}(M^{(0,\infty)}_n\geq cn)}{e^{\gamma cn}\mathbb{P}_{\delta_y}(M^{(0,\infty)}\geq cn)}=0.
			\end{align}
			\par
			\textbf{Case 2:~$c<\widehat{\mathbb{E}}_0[S_1]$.} Similar to \eqref{45ty6yhju2}, we have that
			\begin{align}\label{5hu7y5ggt5}
				e^{\gamma cn}\mathbb{P}_{\delta_y}(M^{(0,\infty)}\geq cn)=e^{\gamma cn}\mathbb{P}_{\delta_y}(M_n^{(0,\infty )}\geq cn)+e^{\gamma cn}\mathbb{P}_{\delta_y}(M^{(0,\infty)}\geq cn,M_n^{(0,\infty)}< cn).
			\end{align}
			According to the fact that $M^{(0,\infty)}\leq M$ and \eqref{5tgt4htyh5y}, we see that for $c< \widehat{\mathbb{E}}_0[S_1]$,
			\begin{align}
				e^{\gamma cn}\mathbb{P}_{\delta_y}(M^{(0,\infty)}\geq cn,M_n^{(0,\infty)}< cn) & \leq 	e^{\gamma }e^{\gamma cn} \mathbb{E}_{\delta_y}\left(\sum_{|\omega|=n} \ind_{\{V(v)>0, \forall v\preceq\omega\}} \ind_{\{V(\omega)<cn\}} e^{-\gamma(cn-V(\omega))} \right)\nonumber\\
				& =e^{\gamma (y+1)} \widehat{\mathbb{P}}_0\left(\underline{S}_n>-y, 
				S_n<cn-y \right)\cr
				&\leq e^{\gamma (y+1)}\widehat{\mathbb{P}}_0 
				(S_n<cn-y),
			\end{align}
			which tends to $0$ by the strong law of large numbers. By Theorem \ref{theom3}, we have for $c< \widehat{\mathbb{E}}_0[S_1]$,
			\begin{align}
				\lim_{n\to+\infty} e^{\gamma cn}\mathbb{P}_{\delta_y}(M_n^{(0,\infty )}\geq cn)- e^{\gamma cn}\mathbb{P}_{\delta_y}(M^{(0,\infty)}\geq cn)=0.
			\end{align}
			Similar to \eqref{4ty6hy7hy6}, above yields that 
			for
			$c\in(0,\widehat{\mathbb{E}}_0[S_1])$,
			\begin{align}
				\lim_{n\to\infty}\mathbb{P}_{\delta_y}(M^{(0,\infty)}_n\geq cn|M^{(0,\infty)}>cn)=1.
			\end{align}
			This completes the proof of the result.
			We are done.
		\end{proof}
		
		\medskip

		\textbf{Acknowledgements.}
		Haojie Hou is supported by the China Postdoctoral Science Foundation (No. 2024M764112).
		Shuxiong Zhang is the corresponding author and supported by the National Key R\&D Program (No. 2022YFA1006102).
		
		\vspace{2cm}

		\noindent{\bf Haojie Hou:}  School of Mathematics and Statistics, Beijing Institute of Technology, Beijing, China.
		
		\noindent{\bf Email:} {\texttt
			houhaojie@bit.edu.cn}
		
		\smallskip		
		\noindent{\bf Shuxiong Zhang:}  School of Mathematics and Statistics, Anhui Normal University, Wuhu, China.
		
		\noindent{\bf Email:} {\texttt
			shuxiong.zhang@mail.bnu.edu.cn}


\begin{thebibliography}{99}
			\bibitem{AHO13}E. A\"{i}d\'ekon, Y. Hu and O. Zindy. The precise tail behavior of the total progeny of a killed branching
			random walk.  \emph{Ann. Probab.} \textbf{41} (2013) 3786--3878.
			
			\bibitem{athreya72}
			K. B. Athreya and P. E. Ney.
			\newblock{\em Branching Processes.}
			Springer, Berlin, 1972.
			
			\bibitem{BGT}
			N. H. Bingham,  C. M. Goldie and J. L. Teugels. \newblock{\em Regular Variation.} Vol. 27.
			Fourth edition. Cambridge University Press, Cambridge, 1989.
			
			\bibitem{durrett10}
			R. Durrett.
			\newblock{\em Probability: Theory and Examples.}
			Fourth edition. Cambridge University Press, Cambridge, 2010.
			
			
			\bibitem{feller1971}
			W. Feller.
			\newblock{\em An Introduction to Probability Theory and Its Applications.}
			Vol. II. Second edition. John Wiley \& Sons, New York-London-Sydney, 1971.
			
			\bibitem{HJRS25}
			H. Hou, Y. Jiang, Y.-X. Ren and R. Song.
			Tail probability of maximal displacement in critical and subcritical branching stable processes.
			arXiv:2504.05540.
			
			
			\bibitem{hou25}
			H. Hou, Y. Jiang, Y.-X. Ren and R. Song.
			Tail probability of maximal displacement in critical branching L\'evy process with stable branching.
			\emph{Bernoulli} \textbf{31} (2025) 630--648.
			
			
			\bibitem{hou25+}
			H. Hou, Y.-X. Ren and R. Song.
			Tails of extinction time and maximal displacement
			for critical branching killed L\'{e}vy process.
			To appear in \emph{Potential Anal.} arXiv:2405.09019.
			
			\bibitem{houzhu25}
			H. Hou, Y.-X. Ren, R. Song and Y. Zhu.
			Asymptotic behaviours of subcritical branching killed Brownian motions with drift.
			\emph{Adv. in Appl. Probab.} \textbf{57} (2025) 1--26.
			
			
			\bibitem{Kesten1995}
			H. Kesten. Branching random walk with a critical branching part. \emph{J. Theor. Probab.} \textbf{8} (1995) 921--962.
			
			
			\bibitem{KLMR} A.E. Kyprianou,  R.-L. Liu,  A.  Murillo-Salas and  Y.-X. Ren. Supercritical super-Brownian
			motion with a general branching mechanism and travelling waves.
			\emph{Ann. Inst. H. Poincar\'{e} Probab. Statist.}
			\textbf{48} (2012) 661--687.
			
			
			\bibitem{lalley16}
			S. Lalley and Y. Shao.
			Maximal displacement of critical branching symmetric stable processes.
			\emph{Ann. Inst. Henri Poincar\'e Probab. Stat.} \textbf{52} (2016) 1161--1177.
			
			\bibitem{lalley15ptrf}
			S. Lalley and Y. Shao.
			On the maximal displacement of critical branching random walk.
			\emph{Probab. Theory Related Fields} \textbf{162} (2015) 71--96.
			
			\bibitem{lalley-zheng15}
			S. Lalley and B. Zheng. Critical branching Brownian motion with killing. \emph{Electron. J. Probab.} \textbf{20} (2015) 1--29.
			
			\bibitem{MS25+} P. Maillard and J. Schweinsberg. The all-time maximum for branching Brownian motion
			with absorption conditioned on long-time survival. To appear in 	\emph{Ann. Inst. Henri Poincar\'e Probab. Stat.} arXiv:2310.00707.
			
			\bibitem{zheng21ptrf}
			E. Neuman and X. Zheng.
			On the maximal displacement of near-critical branching random walks.
			\emph{Probab. Theory Related Fields} \textbf{180} (2021) 199--232.
			
			
			\bibitem{zheng17ptrf}
			E. Neuman and X. Zheng.
			On the maximal displacement of subcritical branching random walks.
			\emph{Probab. Theory Related Fields} \textbf{167} (2017) 1137--1164.
			
			
			
			
			\bibitem{mehet17}
			M. \"Oz, M. \c{C}a\v{g}lar and J. Engl\"ander.
			Conditional speed of branching Brownian motion, skeleton decomposition and application to random obstacles.
			\emph{Ann. Inst. Henri Poincar\'e Probab. Stat.} \textbf{53} (2017) 842--864.
			
			
			\bibitem{pinsky95}
			R. G. Pinsky.
			On the large time growth rate of the support of supercritical super-Brownian motion.
			\emph{Ann. Probab.} \textbf{23} (1995) 1748--1754.
			
			\bibitem{profeta24bernoulli}
			C. Profeta.
			Maximal displacement of spectrally negative branching  L\'{e}vy processes.
			\emph{Bernoulli} \textbf{30} (2024) 961--982.
			
			
			\bibitem{profeta22alea}
			C. Profeta.
			Extreme values of critical and subcritical branching stable processes with positive jumps.
			\emph{ALEA Lat. Am. J. Probab. Math. Stat.} \textbf{19} (2022) 1421--1433.
			
			\bibitem{Profeta2025} C. Profeta. On the maximal displacement of some critical branching L\'{e}vy processes with stable offspring distribution.
			arXiv:2503.18451.
			
			\bibitem{zhu25}
			Y.-X. Ren, R. Song and Y. Zhu.
			Asymptotic behaviours of subcritical branching killed spectrally negative L\'{e}vy process.
			arXiv:2503.03580.
			
			\bibitem{Revuz1984}
			D. Revuz.
			\newblock{\em Markov Chains.}
			Second edition. North-Holland Mathematical Library, 11. North-Holland Publishing Co., Amsterdam, 1984.
			
			\bibitem{sawyer1979}
			S. Sawyer and J. Fleischman.
			Maximum geographic range of a mutant allele considered as a subtype of a Brownian branching random field.
			\emph{PNAS} \textbf{76} (1979) 872--875.
			
			\bibitem{shi2015}
			Z. Shi.
			\newblock{\em Branching Random Walks.}
			Springer, Berlin, 2015.
			
			
			\bibitem{zhang25}
			S. Zhang. On empty balls of critical 2-dimensional branching random walks.  arXiv:2502.16225.
			
			\bibitem{zhangecp2024}
			S. Zhang. Upper deviation probabilities for the range of a supercritical
			super-Brownian motion.
			\emph{Electron. Commun. Probab.} \textbf{29} (2024) 1--8.
			
			
			
		\end{thebibliography}
	\end{document}